\providecommand{\U}[1]{\protect\rule{.1in}{.1in}}
\providecommand{\U}[1]{\protect\rule{.1in}{.1in}}
\newtheorem{theorem}{Theorem}
\newtheorem{corollary}[theorem]{Corollary}
\newtheorem{definition}[theorem]{Definition}
\newtheorem{lemma}[theorem]{Lemma}
\newtheorem{notation}[theorem]{Notation}
\newtheorem{proposition}[theorem]{Proposition}
\newtheorem{remark}[theorem]{Remark}
\newenvironment{proof}[1][Proof]{\textbf{#1.} }{\ \rule{0.5em}{0.5em}}
\begin{document}

\title{Differential Geometry of Microlinear Fr\"{o}licher Spaces III}
\author{Hirokazu Nishimura\\Institute of Mathematics, University of Tsukuba\\Tsukuba, Ibaraki, 305-8571\\Japan}
\maketitle

\begin{abstract}
As the third of our series of papers on differential geometry of microlinear
Fr\"{o}licher spaces is this paper devoted to the Fr\"{o}licher-Nijenhuis
calculus of their named bracket. The main result is that the
Fr\"{o}licher-Nijenhuis bracket satisfies the graded Jacobi identity. It is
also shown that the Lie derivation preserves the Fr\"{o}licher-Nijenhuis
bracket. Our definitions and discussions are highly geometric, \ while
Fr\"{o}licher and Nijenhuis' original definitions and discussions were highly algebraic.

\end{abstract}

\section{Introduction}

As Mangiarotti and Modugno \cite{mm}\ have amply demonstrated, the central
part of orthodox differential geometry based on principal connections can be
developed within a more general framework of fibered manifolds (without any
distinguished additional structures), in which the graded Lie algebra of
tangent-vector-valued forms investigated by Fr\"{o}licher and Nijenhuis
\cite{fn}\ renders an appropriate differential calculus. The present paper is
concerned with this graded Lie algebra, which plays a crucial role in their
general differential geometry. Our present approach as well as \cite{nishi-bc}%
\ is highly combinatorial or geometric, while Fr\"{o}licher and Nijenhuis'
original approach was tremendously algebraic.

This paper consists of 4 sections, besides this introduction. The second
section is devoted to preliminaries including vector fields and (real-valued)
differential forms. Obviously tangent-vector-valued forms are a generalization
of differential forms and vector fields at the same time, while the graded Lie
algebra of tangent-vector-valued forms is a generaliztion of the Lie algebra
of vector fields. Section 3 gives two distinct but equivalent views of
tangent-vector-valued forms, just as we have given two distinct but equivalent
views of vector fields in \cite{nishi-e}. Section 4 is divided into two
subsections, the first of which is mainly concerned with the graded Jacobi
identity of entities much more general than tangent-vector-valued forms (i.e.,
without homogeneity or the alternating property assumed at all), while the
second of which derives the graded Jacobi identity of tangent-vector-valued
forms from the highly general graded Jacobi identity established in the first
subsection. Our proof of the graded Jacobi identity in the first subsection is
based upon the general Jacobi identity established by the author
\cite{nishi-a}\ more than a decade ago. Section 5 is similarly divided into
two subsections, the first of which shows that the Lie derivation of
differential semiforms by tangent-vector-valued semiforms preserves the Lie
bracket, while the second of which demonstrates that the Lie derivation of
differential forms by tangent-vector-valued forms preserves the
Fr\"{o}licher-Nijenhuis bracket.

\section{Preliminaries}

In this paper, $n,p,q,...$ represent natural numbers. We assume that the
reader has already read our previous papers \cite{nishi-e} and \cite{nishi-f}.

\subsection{Fr\"{o}licher Spaces}

Fr\"{o}licher and his followers have vigorously and consistently developed a
general theory of smooth spaces, often called \textit{Fr\"{o}licher spaces}
for his celebrity, which were intended to be the \textit{underlying set
theory} for infinite-dimensional differential geometry. A Fr\"{o}licher space
is an underlying set endowed with a class of real-valued functions on it
(simply called \textit{structure} \textit{functions}) and a class of mappings
from the set $\mathbb{R}$ of real numbers to the underlying set (called
\textit{structure} \textit{curves}) subject to the condition that structure
curves and structure functions should compose so as to yield smooth mappings
from $\mathbb{R}$ to itself. It is required that the class of structure
functions and that of structure curves should determine each other so that
each of the two classes is maximal with respect to the other as far as they
abide by the above condition. What is most important among many nice
properties about the category $\mathbf{FS}$ of Fr\"{o}licher spaces and smooth
mappings is that it is cartesian closed, while neither the category of
finite-dimensional smooth manifolds nor that of infinite-dimensional smooth
manifolds modelled after any infinite-dimensional vector spaces such as
Hilbert spaces, Banach spaces, Fr\'{e}chet spaces or the like is so at all.
For a standard reference on Fr\"{o}licher spaces the reader is referred to
\cite{fro}.

\subsection{Weil Algebras and Infinitesimal Objects}

The notion of a \textit{Weil algebra} was introduced by Weil himself in
\cite{wei}. We denote by $\mathbf{W}$ the category of Weil algebras. Roughly
speaking, each Weil algebra corresponds to an infinitesimal object in the
shade. By way of example, the Weil algebra $\mathbb{R}[X]/(X^{2})$ (=the
quotient ring of the polynomial ring $\mathbb{R}[X]$\ of an indeterminate
$X$\ over $\mathbb{R}$ modulo the ideal $(X^{2})$\ generated by $X^{2}$)
corresponds to the infinitesimal object of first-order nilpotent
infinitesimals, while the Weil algebra $\mathbb{R}[X]/(X^{3})$ corresponds to
the infinitesimal object of second-order nilpotent infinitesimals. Although an
infinitesimal object is undoubtedly imaginary in the real world, as has
harassed both mathematicians and philosophers of the 17th and the 18th
centuries (because mathematicians at that time preferred to talk infinitesimal
objects as if they were real entities), each Weil algebra yields its
corresponding \textit{Weil functor} on the category of smooth manifolds of
some kind to itself, which is no doubt a real entity. By way of example, the
Weil algebra $\mathbb{R}[X]/(X^{2})$ yields the tangent bundle functor as its
corresponding Weil functor. Intuitively speaking, the Weil functor
corresponding to a Weil algebra stands for the exponentiation by the
infinitesimal object corresponding to the Weil algebra at issue. For Weil
functors on the category of finite-dimensional smooth manifolds, the reader is
referred to \S 35 of \cite{kolar}, while the reader can find a readable
treatment of Weil functors on the category of smooth manifolds modelled on
convenient vector spaces in \S 31 of \cite{kri}.

\textit{Synthetic differential geometry }(usually abbreviated to SDG), which
is a kind of differential geometry with a cornucopia of nilpotent
infinitesimals, was forced to invent its models, in which nilpotent
infinitesimals were visible. For a standard textbook on SDG, the reader is
referred to \cite{lav}, while he or she is referred to \cite{kock} for the
model theory of SDG constructed vigorously by Dubuc \cite{dub} and others.
Although we do not get involved in SDG herein, we will exploit locutions in
terms of infinitesimal objects so as to make the paper highly readable. Thus
we prefer to write $\mathcal{W}_{D}$\ and $\mathcal{W}_{D_{2}}$\ in place of
$\mathbb{R}[X]/(X^{2})$ and $\mathbb{R}[X]/(X^{3})$ respectively, where $D$
stands for the infinitesimal object of first-order nilpotent infinitesimals,
and $D_{2}$\ stands for the infinitesimal object of second-order nilpotent
infinitesimals. To Newton and Leibniz, $D$ stood for
\[
\{d\in\mathbb{R}\mid d^{2}=0\}
\]
while $D_{2}$\ stood for
\[
\{d\in\mathbb{R}\mid d^{3}=0\}
\]
We will write $\mathcal{W}_{d\in D_{2}\mapsto d^{2}\in D}$ for the homomorphim
of Weil algebras $\mathbb{R}[X]/(X^{2})\rightarrow\mathbb{R}[X]/(X^{3})$
induced by the homomorphism $X\rightarrow X^{2}$ of the polynomial ring
\ $\mathbb{R}[X]$ to itself. Such locutions are justifiable, because the
category $\mathbf{W}$ of Weil algebras in the real world and the category of
infinitesimal objects in the shade are dual to each other in a sense. Thus we
have a contravariant functor $\mathcal{W}$\ from the category of infinitesimal
objects in the shade to the category of Weil algebras in the real world. Its
inverse contravariant functor from the category of Weil algebras in the real
world to the category of Weil algebras in the real world is denoted by
$\mathcal{D}$. By way of example, $\mathcal{D}_{\mathbb{R}[X]/(X^{2})}$ and
$\mathcal{D}_{\mathbb{R}[X]/(X^{3})}$\ stand for $D$ and $D_{2}$%
\ respectively. To familiarize himself or herself with such locutions, the
reader is strongly encouraged to read the first two chapters of \cite{lav},
even if he or she is not interested in SDG at all.

In \cite{nishi-c} we have discussed how to assign, to each pair $(X,W)$\ of a
Fr\"{o}licher space $X$ and a Weil algebra $W$,\ another Fr\"{o}licher space
$X\otimes W$\ called the \textit{Weil prolongation of} $X$ \textit{with
respect to} $W$, which is naturally extended to a bifunctor $\mathbf{FS}%
\times\mathbf{W\rightarrow FS}$, and then to show that the functor
$\cdot\otimes W:\mathbf{FS\rightarrow FS}$ is product-preserving for any Weil
algebra $W$. Weil prolongations are well-known as \textit{Weil functors} for
finite-dimensional and infinite-dimensional smooth manifolds in orthodox
differential geometry, as we have already discussed above.

The central object of study in SDG is \textit{microlinear} spaces. Although
the notion of a manifold (=a pasting of copies of a certain linear space) is
defined on the local level, the notion of microlinearity is defined on the
genuinely infinitesimal level. For the historical account of microlinearity,
the reader is referred to \S \S 2.4 of \cite{lav} or Appendix D of
\cite{kock}. To get an adequately restricted cartesian closed subcategory of
Fr\"{o}licher spaces, we have emancipated microlinearity from within a
well-adapted model of SDG to Fr\"{o}licher spaces in the real world in
\cite{nishi-d}. Recall that a Fr\"{o}licher space $X$ is called
\textit{microlinear} providing that any finite limit diagram $\mathbb{D}$ in
$\mathbf{W}$ yields a limit diagram $X\otimes\mathbb{D}$ in $\mathbf{FS}$,
where $X\otimes\mathbb{D}$ is obtained from $\mathbb{D}$ by putting $X\otimes$
to the left of every object and every morphism in $\mathbb{D}$.

As we have discussed there, all convenient vector spaces are microlinear, so
that all $C^{\infty}$-manifolds in the sense of \cite{kri} (cf. Section 27)
are also microlinear.

We have no reason to hold that all Fr\"{o}licher spaces credit Weil
prolongations as exponentiation by infinitesimal objects in the shade.
Therefore we need a notion which distinguishes Fr\"{o}licher spaces that do so
from those that do not. A Fr\"{o}licher space $X$ is called \textit{Weil
exponentiable }if
\begin{equation}
(X\otimes(W_{1}\otimes_{\infty}W_{2}))^{Y}=(X\otimes W_{1})^{Y}\otimes
W_{2}\label{2.2.1}%
\end{equation}
holds naturally for any Fr\"{o}licher space $Y$ and any Weil algebras $W_{1}$
and $W_{2}$. If $Y=1$, then (\ref{2.2.1}) degenerates into
\begin{equation}
X\otimes(W_{1}\otimes_{\infty}W_{2})=(X\otimes W_{1})\otimes W_{2}%
\label{2.2.2}%
\end{equation}
If $W_{1}=\mathbb{R}$, then (\ref{2.2.1}) degenerates into
\begin{equation}
(X\otimes W_{2})^{Y}=X^{Y}\otimes W_{2}\label{2.2.3}%
\end{equation}

We have shown in \cite{nishi-c} that all convenient vector spaces are Weil
exponentiable, so that all $C^{\infty}$-manifolds in the sense of \cite{kri}
(cf. Section 27) are Weil exponentiable.

We have demonstrated in \cite{nishi-d} that all Fr\"{o}licher spaces that are
microlinear and Weil exponentiable form a cartesian closed category. In the
sequel, $M$ shall be assumed to be such a Fr\"{o}licher space.

It is well known that the category $\mathbf{W}$\ is left exact. In SDG, a
finite diagram $\mathbb{D}$ in $\mathbf{D}$\textbf{\ }is called a
\textit{quasi-colimit diagram} provided that the contravariant functor
$\mathcal{W}$ transforms $\mathbb{D}$\ into a limit diagram in $\mathbf{W}$.
By way of example, the following diagram in $\mathbf{D}$\textbf{\ }is a famous
quasi-colimit diagram, for which the reader is referred to pp.92-93 of
\cite{lav}.%

\begin{equation}%
\begin{array}
[c]{ccc}%
D(2) &
\begin{array}
[c]{c}%
i\\
\rightarrow\\
\,
\end{array}
& D^{2}\\%
\begin{array}
[c]{ccc}%
i & \downarrow & \,
\end{array}
&  &
\begin{array}
[c]{ccc}%
\, & \downarrow & \psi
\end{array}
\\
D^{2} &
\begin{array}
[c]{c}%
\,\\
\rightarrow\\
\varphi
\end{array}
& D^{2}\oplus D
\end{array}
\label{2.2.4}%
\end{equation}
where $i:D(2)\rightarrow D^{2}$ is the canonical injection, $D^{2}\oplus D$
is
\[
D^{2}\oplus D=\{(d_{1},d_{2},e)\in D^{3}\mid d_{1}e=d_{2}e=0\}
\]
$\varphi:D^{2}\rightarrow D^{2}\oplus D$ is
\[
\varphi(d_{1},d_{2})=(d_{1},d_{2},0)
\]
for any $(d_{1},d_{2})\in D^{2}$, and $\psi:D^{2}\rightarrow D^{2}\oplus D$
is
\[
\psi(d_{1},d_{2})=(d_{1},d_{2},d_{1}d_{2})
\]
for any $(d_{1},d_{2})\in D^{2}$.

\subsection{Vector Fields}

Our two distinct but equivalent viewpoints of vector fields on $M$ are simply
based upon the following exponential law:
\begin{align*}
&  \left[  M\rightarrow M\otimes\mathcal{W}_{D}\right] \\
&  =\left[  M\rightarrow M\right]  \otimes\mathcal{W}_{D}%
\end{align*}
The first definition of a vector field on $M$ goes as follows:

\begin{definition}
A vector field on $M$ is a section of the tangent bundle $\pi:M\otimes
\mathcal{W}_{D}\rightarrow M$.
\end{definition}

The second definition of a vector field on $M$ goes as follows:

\begin{definition}
A vector field on $M$ is a tangent vector of the space $\left[  M\rightarrow
M\right]  $ with foot point $\mathrm{id}_{M}$.
\end{definition}

Generally speaking, we prefer the second definition of a vector field to the
first one. In our previous paper \cite{nishi-e}, we have shown that

\begin{theorem}
\label{t2.3.1}The totality of vector fields on $M$ forms a Lie algebra.
\end{theorem}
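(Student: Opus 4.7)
The plan is to adopt the second viewpoint, regarding a vector field as a tangent vector of $\left[  M\rightarrow M\right]$ at $\mathrm{id}_{M}$. By Weil exponentiability this amounts to a morphism $X:M\otimes\mathcal{W}_{D}\rightarrow M$ whose restriction along $0\in D$ is $\mathrm{id}_{M}$, i.e., in infinitesimal language, a $D$-parametrized family of endomorphisms $X_{d}$ of $M$ with $X_{0}=\mathrm{id}_{M}$. The $\mathbb{R}$-module structure on the set of vector fields is then standard: because $M$ is microlinear and Weil exponentiable, the mapping space $\left[  M\rightarrow M\right]$ inherits microlinearity from the closure properties established in the earlier installments of the series, and its tangent space at $\mathrm{id}_{M}$ is therefore an $\mathbb{R}$-module; addition and scalar multiplication of vector fields are inherited from that structure.

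The central construction is the Lie bracket. For vector fields $X,Y$, I would form the commutator
\[
c(d_{1},d_{2})=Y_{-d_{2}}\circ X_{-d_{1}}\circ Y_{d_{2}}\circ X_{d_{1}},
\]
interpreted as a morphism $M\otimes\mathcal{W}_{D^{2}}\rightarrow M$. Either of $d_{1}=0$ or $d_{2}=0$ causes the four factors to cancel pairwise, so $c$ and the constant morphism $\mathrm{id}_{M}:M\otimes\mathcal{W}_{D^{2}}\rightarrow M$ agree on the subobject $D(2)\hookrightarrow D^{2}$. Applying microlinearity of $M$ to the quasi-colimit diagram (\ref{2.2.4}), this matching pair determines a unique extension $\bar{c}:M\otimes\mathcal{W}_{D^{2}\oplus D}\rightarrow M$ with $\bar{c}\circ\psi=c$ and $\bar{c}\circ\varphi=\mathrm{id}_{M}$. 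Restricting $\bar{c}$ to the remaining $D$-factor yields a morphism $M\otimes\mathcal{W}_{D}\rightarrow M$ over $\mathrm{id}_{M}$, which is the desired bracket $\left[  X,Y\right]$.

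It remains to verify the Lie-algebra axioms. Bilinearity is a routine verification from the $\mathbb{R}$-module structure on $T_{\mathrm{id}_{M}}\left[  M\rightarrow M\right]$ together with the naturality of the universal extension above. Antisymmetry is essentially built into the commutator: inverting the four-fold product $c(d_{1},d_{2})$ returns the commutator defining $\left[  Y,X\right]$ with $d_{1}$ and $d_{2}$ swapped, and passage to inverses in the $D$-parametrized family is precisely negation in the ambient $D$-direction. The main obstacle, and what I would invest the bulk of effort in, is the Jacobi identity. The plan there is to invoke the general Jacobi identity for commutators of infinitesimal automorphisms of a microlinear space proved in \cite{nishi-a}: three vector fields $X,Y,Z$ assemble into a family of endomorphisms parametrized by a Weil algebra large enough to accommodate each of $\left[  \left[  X,Y\right]  ,Z\right]$, $\left[  \left[  Y,Z\right]  ,X\right]$, and $\left[  \left[  Z,X\right]  ,Y\right]$ as coefficients in a suitable $D$-direction, and the cited microlinear statement then delivers the vanishing of their cyclic sum directly.
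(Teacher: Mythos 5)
Your proposal follows essentially the same route as the paper (which defers the detailed proof to \cite{nishi-e}): vector fields are taken in the second sense as tangent vectors to $\left[M\rightarrow M\right]$ at $\mathrm{id}_{M}$, the bracket is obtained by the strong-difference construction attached to the quasi-colimit diagram (\ref{2.2.4}), and the Jacobi identity is reduced to the general Jacobi identity of Theorem \ref{t2.3.2} from \cite{nishi-a}. The only cosmetic difference is that you take the strong difference of the four-fold commutator against the identity, whereas the paper's formulation (cf.\ the degenerate case $p=q=0$ of $\left[\,\cdot\,,\cdot\,\right]_{L}$ in Section 4) takes the strong difference of the two composites $Y_{d_{2}}\circ X_{d_{1}}$ and $X_{d_{1}}\circ Y_{d_{2}}$; these are standardly equivalent, and the latter makes the identification of the three iterated brackets with the three expressions of Theorem \ref{t2.3.2} via the six composite microcubes more immediate.
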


In particular, our proof of the Jacobi identity of vector fields is based upon
the following general Jacobi identity.

\begin{theorem}
\label{t2.3.2}Let $\gamma_{123},\gamma_{132},\gamma_{213},\gamma_{231}%
,\gamma_{312},\gamma_{321}\in M\otimes W_{D^{3}}$. As long as the following
three expressions are well defined, they sum up only to vanish:
\begin{align*}
&  (\gamma_{123}\overset{\cdot}{\underset{1}{-}}\gamma_{132})\overset{\cdot
}{-}(\gamma_{231}\overset{\cdot}{\underset{1}{-}}\gamma_{321})\\
&  (\gamma_{231}\overset{\cdot}{\underset{2}{-}}\gamma_{213})\overset{\cdot
}{-}(\gamma_{312}\overset{\cdot}{\underset{2}{-}}\gamma_{132})\\
&  (\gamma_{312}\overset{\cdot}{\underset{3}{-}}\gamma_{321})\overset{\cdot
}{-}(\gamma_{123}\overset{\cdot}{\underset{3}{-}}\gamma_{213})
\end{align*}

\end{theorem}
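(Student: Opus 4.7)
The plan is to reduce the identity to a single algebraic fact in a Weil algebra by exploiting the microlinearity of $M$. Each $\gamma_{ijk}$ represents an infinitesimal cube in $M$ parametrized by three coordinates from $D$; the inner subtraction $\overset{\cdot}{\underset{k}{-}}$ along the $k$-th coordinate is defined precisely when the two operands agree on the face $d_{k}=0$, while the outer subtraction $\overset{\cdot}{-}$ is the standard tangent-vector subtraction, available when two tangent vectors share a basepoint. The hypothesis that the three displayed expressions are well defined therefore encodes a package of face-compatibility conditions linking all six $\gamma_{ijk}$.

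The first step is to assemble these compatibility conditions into a single quasi-colimit diagram in the category of infinitesimal objects, whose corresponding finite limit diagram $\mathbb{D}$ in $\mathbf{W}$ records exactly how the six copies of $\mathcal{W}_{D^{3}}$ are glued along the shared faces. By microlinearity of $M$ and the fact that the Weil-prolongation functor $M\otimes(\cdot)$ sends $\mathbb{D}$ to a limit in $\mathbf{FS}$, the six $\gamma_{ijk}$ lift uniquely to a single element $\Gamma\in M\otimes\mathcal{W}_{\mathbb{D}}$. This is the place where all the geometric content of the theorem is concentrated.

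Next, I would write each of the three displayed expressions as the image of $\Gamma$ under a morphism $M\otimes\mathcal{W}_{\mathbb{D}}\rightarrow M\otimes\mathcal{W}_{D}$ induced by a specific algebra map $\mathcal{W}_{D}\rightarrow\mathcal{W}_{\mathbb{D}}$ (one for each $k\in\{1,2,3\}$), each such map encoding the double-difference recipe that produces the $k$-th displayed expression. The theorem then reduces to the purely algebraic identity that the sum of the three induced linear maps is zero. This can be verified by a direct computation inside $\mathbb{R}[X_{1},X_{2},X_{3}]/(X_{1}^{2},X_{2}^{2},X_{3}^{2})$: tracking the coefficient of the generator of $\mathcal{W}_{D}$, one finds that the six signed monomials $\pm X_{\sigma(1)}X_{\sigma(2)}X_{\sigma(3)}$ arising from the six permutations cancel in three pairs.

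The main obstacle will be the first step: correctly identifying $\mathbb{D}$ and verifying that the face-compatibilities implicit in the well-definedness hypotheses are exactly those required by the quasi-colimit. The canonical diagram (\ref{2.2.4}) is the right sort of picture, generalized from two cyclic arrows to three, and the needed limit diagram in $\mathbf{W}$ should be built by iteratively taking pullbacks along the face inclusions $D^{2}\hookrightarrow D^{3}$ suggested by the pairs $(123,132)$, $(231,321)$ and their cyclic analogues. Once $\Gamma$ is in hand, the remainder of the argument is essentially the same algebraic bookkeeping that underlies the classical Jacobi identity for Lie brackets.
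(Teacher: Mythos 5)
First, note that this paper does not actually prove Theorem \ref{t2.3.2}: it is imported wholesale from \cite{nishi-a}, with two further proofs in \cite{nishi-b} and \cite{nishi-oso}, so the comparison target is those papers rather than anything in the present text. Your overall strategy --- glue the six microcubes into a single $\Gamma\in M\otimes\mathcal{W}_{\mathbb{D}}$ by microlinearity, realize the three double strong differences as images of $\Gamma$ under Weil-algebra homomorphisms into $\mathcal{W}_{D}$, and finish with a linear cancellation --- is indeed the shape of those proofs. But as written there is a genuine gap, and it sits precisely at the step you flag as ``the main obstacle'': identifying $\mathbb{D}$ and showing that the three displayed expressions are computed from $\Gamma$ by maps whose sum classifies the zero tangent vector. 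That step is not routine bookkeeping; it is essentially the entire theorem, which is why it occupies the bulk of \cite{nishi-a}. Concretely, $\overset{\cdot}{\underset{1}{-}}$ is defined by lifting the pair $(\gamma_{123},\gamma_{132})$ through the quasi-colimit (\ref{2.2.4}) applied with $M\otimes\mathcal{W}_{D}$ in place of $M$, so the relevant gluing locus is $\left\{  (d_{1},d_{2},d_{3})\in D^{3}\mid d_{2}d_{3}=0\right\}  \cong D\times D(2)$, not a face $D^{2}\hookrightarrow D^{3}$ as you propose; moreover the outer $\overset{\cdot}{-}$ imposes additional compatibilities on the derived microsquares, which must also be encoded. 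The resulting $\mathbb{D}$ is not a gluing of six copies of $D^{3}$ along faces but carries extra nilpotent generators (the analogues of the coordinate $e$ in $D^{2}\oplus D$) recording the top-degree discrepancies, and producing it explicitly together with the six maps $D^{3}\rightarrow\mathbb{D}$ and the three maps $D\rightarrow\mathbb{D}$ is where all the work lies.

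Relatedly, the final computation is misstated: the six monomials $X_{\sigma(1)}X_{\sigma(2)}X_{\sigma(3)}$ are all equal to $X_{1}X_{2}X_{3}$ in the commutative algebra $\mathbb{R}[X_{1},X_{2},X_{3}]/(X_{1}^{2},X_{2}^{2},X_{3}^{2})$, so there is nothing there to ``cancel in three pairs.'' What actually cancels is the twelve-term sum in which each of the six discrepancy coordinates of $\Gamma$ (one per permutation, living in the extra generators of $\mathcal{W}_{\mathbb{D}}$, not in $\mathcal{W}_{D^{3}}$) occurs once with each sign. That cancellation is immediate once $\mathbb{D}$ and the three maps are in hand, which again shows the substance of the proof is concentrated in the construction you have deferred. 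Either carry out the quasi-colimit representation explicitly, as in \cite{nishi-a} or \cite{nishi-b}, or cite it; do not present the reduction as if it were already accomplished.
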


The above theorem was discovered by the author in \cite{nishi-a}\ more than a
decade ago, and with due regard to its importance, it was provided with two
other proofs in \cite{nishi-b} and \cite{nishi-oso}.

\subsection{Euclidean Vector Spaces}

Frankly speaking, our exposition of a Euclidean vector space in \cite{nishi-f}
was a bit confused. The exact definition of a Euclidean vector space goes as follows.

\begin{definition}
A vector space $\mathbb{E}$\ (over $\mathbb{R}$) in the category $\mathbf{FS}
$\ is called Euclidean provided that the canonical mapping $\mathbf{i}%
_{\mathbb{E}}^{1}:\mathbb{E\times E\rightarrow E}\otimes\mathcal{W}_{D}$
induced by the mapping
\[
(\mathbf{a},\mathbf{b})\in\mathbb{E\times E\mapsto(}x\in\mathbb{R}%
\mapsto\mathbf{a}\mathbb{+}x\mathbf{b}\in\mathbb{E)\in E}^{\mathbb{R}}%
\]
is bijective.
\end{definition}

\begin{notation}
Let $\mathbb{E}$\ be a Euclidean vector space. Given $\gamma\in\mathbb{E}%
\otimes\mathcal{W}_{D}$, we write $\mathbf{D}\left(  \gamma\right)  $ for
$\mathbf{b}\in\mathbb{E}$\ in the above definition.
\end{notation}

\begin{notation}
Let $\mathbb{E}$\ be a Euclidean vector space. Given $\gamma\in\mathbb{E}%
\otimes\mathcal{W}_{D^{n}}$ ($n\geq2$), we write $\mathbf{D}_{i}\left(
\gamma\right)  \in\mathbb{E}\otimes\mathcal{W}_{D^{n-1}}$ for the image of
$\gamma$\ under the composite of mappings
\begin{align*}
&  \mathbb{E}\otimes\mathcal{W}_{D^{n}}\underrightarrow{\quad\mathrm{id}%
_{\mathbb{E}}\otimes\mathcal{W}_{(d_{1},...d_{n})\in D^{n}\mapsto
(d_{1},...,d_{i-1},d_{i+1},...,d_{n},d_{i})\in D^{n}}\quad}\mathbb{E}%
\otimes\mathcal{W}_{D^{n}}\\
&  =\mathbb{E}\otimes\left(  \mathcal{W}_{D^{n-1}}\otimes_{\infty}%
\mathcal{W}_{D}\right)  =\left(  \mathbb{E}\otimes\mathcal{W}_{D^{n-1}%
}\right)  \otimes\mathcal{W}_{D}\underrightarrow{\quad\mathbf{D\quad}%
}\mathbb{E}\otimes\mathcal{W}_{D^{n-1}}%
\end{align*}

\end{notation}

\begin{theorem}
\label{t2.4.1}In a Euclidean vector space $\mathbb{E}$, Taylor's expansion
theorem holds in the sense that the canonical mapping $\mathbf{i}_{\mathbb{E}%
}^{2}:\mathbb{E\times E\times E\times E\rightarrow E}\otimes\mathcal{W}%
_{D^{2}}$ induced by the mapping
\begin{align*}
(\mathbf{a,b}_{1},\mathbf{b}_{2},\mathbf{b}_{12})  &  \in\mathbb{E\times
E\times E\times E\mapsto}\\
\left(  \left(  x_{1},x_{2}\right)  \in\mathbb{R}^{2}\mapsto\mathbf{a+}%
x\mathbf{_{1}b}_{1}+x_{2}\mathbf{b}_{2}+x_{1}x_{2}\mathbf{b}_{12}\in
\mathbb{E}\right)   &  \in\mathbb{E}^{\mathbb{R}^{2}}%
\end{align*}
is bijective, the canonical mapping $\mathbf{i}_{\mathbb{E}}^{3}%
:\mathbb{E\times E\times E\times E\times E\times E\times E\times E\rightarrow
E}\otimes\mathcal{W}_{D^{3}}$ induced by the mapping
\begin{align*}
(\mathbf{a,b}_{1},\mathbf{b}_{2},\mathbf{b}_{3},\mathbf{b}_{12},\mathbf{b}%
_{13},\mathbf{b}_{23},\mathbf{b}_{123})  &  \in\mathbb{E\times E\times E\times
E\times E\times E\times E\times E\mapsto}\\
(\left(  x_{1},x_{2},x_{3}\right)   &  \in\mathbb{R}^{3}\mapsto\mathbf{a+}%
x\mathbf{_{1}b}_{1}+x_{2}\mathbf{b}_{2}+x_{3}\mathbf{b}_{3}\\
+x_{1}x_{2}\mathbf{b}_{12}+x_{1}x_{3}\mathbf{b}_{13}+x_{2}x_{3}\mathbf{b}%
_{23}+x_{1}x_{2}x_{3}\mathbf{b}_{123}  &  \in\mathbb{E)}\in\mathbb{E}%
^{\mathbb{R}^{3}}%
\end{align*}
is bijective, and so on.
\end{theorem}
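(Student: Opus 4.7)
The plan is to establish the bijectivity of $\mathbf{i}_{\mathbb{E}}^{n}$ by induction on $n$, the base case $n=1$ being nothing other than the definition of a Euclidean vector space. The core observations driving the induction are: (a) Weil exponentiability (\ref{2.2.2}) lets us peel off one copy of $\mathcal{W}_{D}$ at a time, since $\mathcal{W}_{D^{n}}=\mathcal{W}_{D^{n-1}}\otimes_{\infty}\mathcal{W}_{D}$; and (b) the class of Euclidean vector spaces is stable under finite Cartesian products.

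For (b), I would show that if $\mathbb{E}$ is Euclidean, then so is $\mathbb{E}^{k}$ for every $k\geq 1$. Because $\cdot\otimes\mathcal{W}_{D}$ is product-preserving, $\mathbb{E}^{k}\otimes\mathcal{W}_{D}=(\mathbb{E}\otimes\mathcal{W}_{D})^{k}$, and under this identification the map $\mathbf{i}_{\mathbb{E}^{k}}^{1}$ becomes, after the obvious reshuffling of coordinates between $\mathbb{E}^{k}\times\mathbb{E}^{k}$ and $(\mathbb{E}\times\mathbb{E})^{k}$, precisely the $k$-fold Cartesian power of $\mathbf{i}_{\mathbb{E}}^{1}$, hence a bijection.

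Assume inductively that $\mathbf{i}_{\mathbb{E}}^{n-1}$ is a linear bijection $\mathbb{E}^{2^{n-1}}\cong\mathbb{E}\otimes\mathcal{W}_{D^{n-1}}$. Then $\mathbb{E}\otimes\mathcal{W}_{D^{n-1}}$ inherits the structure of a Euclidean vector space by (b), and Weil exponentiability gives
\[
\mathbb{E}\otimes\mathcal{W}_{D^{n}}=(\mathbb{E}\otimes\mathcal{W}_{D^{n-1}})\otimes\mathcal{W}_{D}\cong(\mathbb{E}\otimes\mathcal{W}_{D^{n-1}})^{2}\cong\mathbb{E}^{2^{n}},
\]
the first isomorphism being $\mathbf{i}_{\mathbb{E}\otimes\mathcal{W}_{D^{n-1}}}^{1}$ and the second the square of $\mathbf{i}_{\mathbb{E}}^{n-1}$.

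It remains to check that the resulting bijection $\mathbb{E}^{2^{n}}\cong\mathbb{E}\otimes\mathcal{W}_{D^{n}}$ is exactly the one described by the Taylor parameterization $\mathbf{i}_{\mathbb{E}}^{n}$. Tracing through the chain, a $\gamma\in\mathbb{E}\otimes\mathcal{W}_{D^{n}}$ depending on $(x_{1},\ldots,x_{n})\in D^{n}$ decomposes as $\gamma=\mathbf{u}+x_{n}\mathbf{v}$ with $\mathbf{u},\mathbf{v}\in\mathbb{E}\otimes\mathcal{W}_{D^{n-1}}$; applying the induction hypothesis to $\mathbf{u}$ and $\mathbf{v}$ expresses each as a polynomial in $(x_{1},\ldots,x_{n-1})$ whose coefficients are indexed by subsets $S\subseteq\{1,\ldots,n-1\}$. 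Recombining, the coefficients coming from $\mathbf{u}$ provide the $\mathbf{b}_{S}$ for $S\subseteq\{1,\ldots,n-1\}$, while those from $\mathbf{v}$ provide the $\mathbf{b}_{S\cup\{n\}}$, giving the full Taylor polynomial. The main obstacle is the combinatorial bookkeeping required to verify that this splitting matches the factorization of $\mathcal{W}_{D^{n}}$ as $\mathcal{W}_{D^{n-1}}\otimes_{\infty}\mathcal{W}_{D}$; once the case $n=2$ is checked by hand, the pattern for general $n$ is automatic.
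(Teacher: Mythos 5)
Your proposal is correct and is essentially the paper's own argument: the paper proves the case $n=2$ by writing $\mathbb{E}\otimes\mathcal{W}_{D^{2}}=(\mathbb{E}\otimes\mathcal{W}_{D})\otimes\mathcal{W}_{D}=(\mathbb{E}\times\mathbb{E})\otimes\mathcal{W}_{D}=(\mathbb{E}\otimes\mathcal{W}_{D})\times(\mathbb{E}\otimes\mathcal{W}_{D})=\mathbb{E}^{4}$, which is exactly your combination of Weil exponentiability, the Euclidean isomorphism, and product preservation of $\cdot\otimes\mathcal{W}_{D}$. You merely package the "and so on" as an explicit induction (and are somewhat more careful about identifying the composite with the stated Taylor parameterization), which the paper leaves to the reader.
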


\begin{proof}
Here we deal only with the first case, leaving similar treatments of the other
cases to the reader. Schematically we have
\begin{align*}
&  \mathbb{E}\otimes\mathcal{W}_{D^{2}}\\
&  =\mathbb{E}\otimes\left(  \mathcal{W}_{D}\otimes_{\infty}\mathcal{W}%
_{D}\right) \\
&  =\left(  \mathbb{E}\otimes\mathcal{W}_{D}\right)  \otimes\mathcal{W}_{D}\\
&  =\left(  \mathbb{E\times E}\right)  \otimes\mathcal{W}_{D}\\
&  =\left(  \mathbb{E}\otimes\mathcal{W}_{D}\right)  \mathbb{\times}\left(
\mathbb{E}\otimes\mathcal{W}_{D}\right) \\
&  \text{[Since the endofunctor }\cdot\otimes\mathcal{W}_{D}\text{ of the
category }\mathbf{FS}\text{\ preserves products]}\\
&  =\mathbb{E\times E\times E\times E}%
\end{align*}

\end{proof}

\begin{proposition}
\label{t2.4.2} Let $\mathbb{E}$\ be a Euclidean vector space. Given $\gamma
\in\mathbb{E}\otimes\mathcal{W}_{D^{2}}$, we have
\[
\mathbf{D}\left(  \mathbf{D}_{2}\left(  \gamma\right)  \right)  =\mathbf{D}%
\left(  \mathbf{D}_{1}\left(  \gamma\right)  \right)
\]

\end{proposition}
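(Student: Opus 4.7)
The strategy is to use Taylor's expansion theorem (Theorem \ref{t2.4.1}, first case) to parameterize $\gamma\in\mathbb{E}\otimes\mathcal{W}_{D^{2}}$ by four concrete vectors of $\mathbb{E}$, and then to check that both $\mathbf{D}(\mathbf{D}_{1}(\gamma))$ and $\mathbf{D}(\mathbf{D}_{2}(\gamma))$ equal the ``mixed'' coefficient. Explicitly, by Theorem \ref{t2.4.1} there exist unique $\mathbf{a},\mathbf{b}_{1},\mathbf{b}_{2},\mathbf{b}_{12}\in\mathbb{E}$ with $\gamma=\mathbf{i}_{\mathbb{E}}^{2}(\mathbf{a},\mathbf{b}_{1},\mathbf{b}_{2},\mathbf{b}_{12})$; schematically $\gamma$ corresponds to $\mathbf{a}+x_{1}\mathbf{b}_{1}+x_{2}\mathbf{b}_{2}+x_{1}x_{2}\mathbf{b}_{12}$.

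For $\mathbf{D}_{2}(\gamma)$, the permutation in Notation 2.4.3 with $(n,i)=(2,2)$ is the identity, so under the identification $\mathbb{E}\otimes\mathcal{W}_{D^{2}}=(\mathbb{E}\otimes\mathcal{W}_{D})\otimes\mathcal{W}_{D}$, the element $\gamma$ is the curve $x_{2}\mapsto(\mathbf{a}+x_{1}\mathbf{b}_{1})+x_{2}(\mathbf{b}_{2}+x_{1}\mathbf{b}_{12})$ into $\mathbb{E}\otimes\mathcal{W}_{D}$, so the last $\mathbf{D}$ in the definition of $\mathbf{D}_{2}$ extracts the element $\mathbf{i}_{\mathbb{E}}^{1}(\mathbf{b}_{2},\mathbf{b}_{12})\in\mathbb{E}\otimes\mathcal{W}_{D}$, whence $\mathbf{D}(\mathbf{D}_{2}(\gamma))=\mathbf{b}_{12}$. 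For $\mathbf{D}_{1}(\gamma)$, the permutation is the swap $(d_{1},d_{2})\mapsto(d_{2},d_{1})$; the uniqueness clause of Taylor's theorem makes it transparent that this swap transports the Taylor datum $(\mathbf{a},\mathbf{b}_{1},\mathbf{b}_{2},\mathbf{b}_{12})$ to $(\mathbf{a},\mathbf{b}_{2},\mathbf{b}_{1},\mathbf{b}_{12})$, and the same computation yields $\mathbf{D}_{1}(\gamma)=\mathbf{i}_{\mathbb{E}}^{1}(\mathbf{b}_{1},\mathbf{b}_{12})$, hence $\mathbf{D}(\mathbf{D}_{1}(\gamma))=\mathbf{b}_{12}$ as well.

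The only real obstacle is bookkeeping: one must keep straight which factor of $\mathcal{W}_{D}$ plays the role of the ``outer'' one when passing through $\mathbb{E}\otimes\mathcal{W}_{D^{2}}=(\mathbb{E}\otimes\mathcal{W}_{D})\otimes\mathcal{W}_{D}$, since it is against that factor that the final $\mathbf{D}$ differentiates, and one must verify—by appeal to uniqueness of Taylor data—that the swap of the two nilpotent coordinates really does interchange $\mathbf{b}_{1}$ and $\mathbf{b}_{2}$ while leaving $\mathbf{a}$ and the mixed coefficient $\mathbf{b}_{12}$ invariant. Once those two identifications are pinned down rigorously, the equality of both iterated derivatives with $\mathbf{b}_{12}$ is immediate, reflecting at the infinitesimal level the classical symmetry of mixed partial derivatives.
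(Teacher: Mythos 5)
Your proof is correct and follows exactly the route the paper takes: the paper's entire proof is the one-line remark that both sides give rise to $\mathbf{b}_{12}$ in the Taylor expansion of Theorem \ref{t2.4.1}. Your write-up simply supplies the bookkeeping (tracking which $\mathcal{W}_{D}$ factor the final $\mathbf{D}$ differentiates against, and how the coordinate swap permutes the Taylor datum) that the paper leaves to the reader.
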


\begin{proof}
It is easy to see that both sides give rise to $\mathbf{b}_{12}$\ in Theorem
\ref{t2.4.1}.
\end{proof}

Here we will give a slight variant of Taylor's expansion theorem.

\begin{theorem}
\label{t2.4.1'}Let $\mathbb{E}$\ be a Euclidean vector space, which is
microlineaar. The canonical mapping $\mathbf{i}_{\mathbb{E}}^{D^{2}\oplus
D}:\mathbb{E\times E\times E\times E\times E\rightarrow E}\otimes
\mathcal{W}_{D^{2}\oplus D}$ induced by the mapping
\begin{align*}
(\mathbf{a,b}_{1},\mathbf{b}_{2},\mathbf{b}_{12},\mathbf{c})  &
\in\mathbb{E\times E\times E\times E\times E\mapsto}\\
\left(  \left(  x_{1},x_{2},x_{3}\right)  \in\mathbb{R}^{3}\mapsto
\mathbf{a+}x\mathbf{_{1}b}_{1}+x_{2}\mathbf{b}_{2}+x_{1}x_{2}\mathbf{b}%
_{12}+x_{3}\mathbf{c}\in\mathbb{E}\right)   &  \in\mathbb{E}^{\mathbb{R}^{3}}%
\end{align*}
is bijective.
\end{theorem}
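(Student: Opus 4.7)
The plan is to turn the quasi-colimit diagram~(\ref{2.2.4}) into a pullback presentation of $\mathbb{E}\otimes\mathcal{W}_{D^{2}\oplus D}$ via microlinearity, and then read off coordinates from Theorem~\ref{t2.4.1}. The only non-trivial book-keeping will be a linear shear produced by the difference between $\varphi$ and $\psi$.

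First, since~(\ref{2.2.4}) is a quasi-colimit in $\mathbf{D}$, the functor $\mathcal{W}$ transforms it into a pullback square in $\mathbf{W}$, so $\mathcal{W}_{D^{2}\oplus D}=\mathcal{W}_{D^{2}}\times_{\mathcal{W}_{D(2)}}\mathcal{W}_{D^{2}}$ along $\mathcal{W}(\varphi)$ and $\mathcal{W}(\psi)$ (whose composites with $\mathcal{W}(i)$ agree). Microlinearity of $\mathbb{E}$ preserves this finite limit, so
\[
\mathbb{E}\otimes\mathcal{W}_{D^{2}\oplus D}=(\mathbb{E}\otimes\mathcal{W}_{D^{2}})\times_{\mathbb{E}\otimes\mathcal{W}_{D(2)}}(\mathbb{E}\otimes\mathcal{W}_{D^{2}}).
\]
By Theorem~\ref{t2.4.1}, each factor $\mathbb{E}\otimes\mathcal{W}_{D^{2}}$ is bijectively parametrized by $(\mathbf{a},\mathbf{b}_{1},\mathbf{b}_{2},\mathbf{b}_{12})\in\mathbb{E}^{4}$. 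A parallel argument, using microlinearity on the quasi-colimit presenting $D(2)$ as the pushout of two copies of $D$ over a point together with the defining Euclidean identification $\mathbb{E}\otimes\mathcal{W}_{D}=\mathbb{E}\times\mathbb{E}$, identifies $\mathbb{E}\otimes\mathcal{W}_{D(2)}$ with $\mathbb{E}^{3}$ via $(\mathbf{a},\mathbf{b}_{1},\mathbf{b}_{2})$, and shows that the restriction $i^{*}$ is the projection $(\mathbf{a},\mathbf{b}_{1},\mathbf{b}_{2},\mathbf{b}_{12})\mapsto(\mathbf{a},\mathbf{b}_{1},\mathbf{b}_{2})$ that forgets the coefficient of $d_{1}d_{2}$, since $d_{1}d_{2}=0$ on $D(2)$.

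It then remains to compute $\varphi^{*}$ and $\psi^{*}$ on the generic element $\mathbf{a}+d_{1}\mathbf{b}_{1}+d_{2}\mathbf{b}_{2}+d_{1}d_{2}\mathbf{b}_{12}+e\mathbf{c}$ of $\mathbb{E}\otimes\mathcal{W}_{D^{2}\oplus D}$: the substitution $e\mapsto 0$ prescribed by $\varphi$ yields the quadruple $(\mathbf{a},\mathbf{b}_{1},\mathbf{b}_{2},\mathbf{b}_{12})$, while $e\mapsto d_{1}d_{2}$ prescribed by $\psi$ yields $(\mathbf{a},\mathbf{b}_{1},\mathbf{b}_{2},\mathbf{b}_{12}+\mathbf{c})$. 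These two quadruples agree on their first three coordinates and have independent fourth coordinates, so the pullback is naturally parametrized by $\mathbb{E}^{5}$, and $\mathbf{i}_{\mathbb{E}}^{D^{2}\oplus D}$ is intertwined with the linear shear $(\mathbf{a},\mathbf{b}_{1},\mathbf{b}_{2},\mathbf{b}_{12},\mathbf{c})\mapsto(\mathbf{a},\mathbf{b}_{1},\mathbf{b}_{2},\mathbf{b}_{12},\mathbf{b}_{12}+\mathbf{c})$ on the second $\mathbf{b}_{12}$-slot. The shear is manifestly bijective, hence so is $\mathbf{i}_{\mathbb{E}}^{D^{2}\oplus D}$. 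The essentially only subtle point is identifying this shear correctly; once that is pinned down, bijectivity is automatic.
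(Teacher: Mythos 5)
Your proof is correct and follows exactly the route the paper intends: its own proof is the one-line remark that the result ``follows from Theorem \ref{t2.4.1} and the quasi-colimit diagram (\ref{2.2.4})'', and your argument simply fills in the details --- microlinearity turns (\ref{2.2.4}) into the fibered product $(\mathbb{E}\otimes\mathcal{W}_{D^{2}})\times_{\mathbb{E}\otimes\mathcal{W}_{D(2)}}(\mathbb{E}\otimes\mathcal{W}_{D^{2}})$, Theorem \ref{t2.4.1} coordinatizes the factors, and the $\varphi^{*}/\psi^{*}$ computation exhibits the map as the bijective shear $(\mathbf{a},\mathbf{b}_{1},\mathbf{b}_{2},\mathbf{b}_{12},\mathbf{c})\mapsto((\mathbf{a},\mathbf{b}_{1},\mathbf{b}_{2},\mathbf{b}_{12}),(\mathbf{a},\mathbf{b}_{1},\mathbf{b}_{2},\mathbf{b}_{12}+\mathbf{c}))$. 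Your identification of the shear is consistent with the conventions used in the proof of Proposition \ref{t2.4.3}, so no gap remains.
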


\begin{proof}
This follows from Theorem \ref{t2.4.1} and the quasi-colimit diagram
(\ref{2.2.4}).
\end{proof}

\begin{proposition}
\label{t2.4.3}Let $\mathbb{E}$\ be a Euclidean vector space, which is
microlineaar. Let $\gamma_{1},\gamma_{2}\in\mathbb{E}\otimes\mathcal{W}%
_{D^{2}}$ with
\begin{align*}
&  \left(  \mathrm{id}_{\mathbb{E}}\otimes\mathcal{W}_{\left(  d_{1}%
,d_{2}\right)  \in D(2)\mapsto\left(  d_{1},d_{2}\right)  \in D^{2}}\right)
\left(  \gamma_{1}\right) \\
&  =\left(  \mathrm{id}_{\mathbb{E}}\otimes\mathcal{W}_{\left(  d_{1}%
,d_{2}\right)  \in D(2)\mapsto\left(  d_{1},d_{2}\right)  \in D^{2}}\right)
\left(  \gamma_{2}\right)
\end{align*}
Then we have
\[
\mathbf{D}\left(  \gamma_{1}\overset{\cdot}{-}\gamma_{2}\right)
=\mathbf{D}\left(  \mathbf{D}_{2}\left(  \gamma_{1}\right)  \right)
-\mathbf{D}\left(  \mathbf{D}_{2}\left(  \gamma_{2}\right)  \right)
\]

\end{proposition}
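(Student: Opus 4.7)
The plan is to reduce both sides of the identity to explicit coefficients in the Taylor expansions of Theorem~\ref{t2.4.1} and Theorem~\ref{t2.4.1'}. First, by Theorem~\ref{t2.4.1} each $\gamma_{i}$ ($i=1,2$) is encoded by a tuple $(\mathbf{a}^{(i)},\mathbf{b}_{1}^{(i)},\mathbf{b}_{2}^{(i)},\mathbf{b}_{12}^{(i)})\in\mathbb{E}^{4}$ with symbolic expression $\gamma_{i}(d_{1},d_{2})=\mathbf{a}^{(i)}+d_{1}\mathbf{b}_{1}^{(i)}+d_{2}\mathbf{b}_{2}^{(i)}+d_{1}d_{2}\mathbf{b}_{12}^{(i)}$. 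Pulling back along the canonical injection $i:D(2)\rightarrow D^{2}$ kills the $d_{1}d_{2}$-term, so the hypothesis that $\gamma_{1}$ and $\gamma_{2}$ agree on $D(2)$ forces $\mathbf{a}^{(1)}=\mathbf{a}^{(2)}$, $\mathbf{b}_{1}^{(1)}=\mathbf{b}_{1}^{(2)}$ and $\mathbf{b}_{2}^{(1)}=\mathbf{b}_{2}^{(2)}$; let $\mathbf{a},\mathbf{b}_{1},\mathbf{b}_{2}$ denote these common values.

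Next, I would invoke the quasi-colimit diagram~(\ref{2.2.4}). Applied to the functor $\mathbb{E}\otimes\cdot$, and using microlinearity of $\mathbb{E}$, it becomes a pullback in $\mathbf{FS}$, so the matching pair $(\gamma_{1},\gamma_{2})$ assembles into a unique $\tilde{\gamma}\in\mathbb{E}\otimes\mathcal{W}_{D^{2}\oplus D}$ whose pullback along $\psi$ is $\gamma_{1}$ and whose pullback along $\varphi$ is $\gamma_{2}$ (this assignment is the convention defining $\overset{\cdot}{-}$; the opposite choice would merely flip the sign of $\mathbf{c}$ below). Theorem~\ref{t2.4.1'} then represents $\tilde{\gamma}$ by data $(\mathbf{a},\mathbf{b}_{1},\mathbf{b}_{2},\mathbf{b}_{12}',\mathbf{c})$ via $\tilde{\gamma}(d_{1},d_{2},e)=\mathbf{a}+d_{1}\mathbf{b}_{1}+d_{2}\mathbf{b}_{2}+d_{1}d_{2}\mathbf{b}_{12}'+e\mathbf{c}$. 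Setting $e=0$ and comparing with $\gamma_{2}$ yields $\mathbf{b}_{12}'=\mathbf{b}_{12}^{(2)}$; setting $e=d_{1}d_{2}$ and comparing with $\gamma_{1}$ yields $\mathbf{b}_{12}'+\mathbf{c}=\mathbf{b}_{12}^{(1)}$; hence $\mathbf{c}=\mathbf{b}_{12}^{(1)}-\mathbf{b}_{12}^{(2)}$.

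To finish, observe that $\gamma_{1}\overset{\cdot}{-}\gamma_{2}\in\mathbb{E}\otimes\mathcal{W}_{D}$ is by definition the restriction of $\tilde{\gamma}$ along the inclusion $e\mapsto(0,0,e)$, so $\mathbf{D}(\gamma_{1}\overset{\cdot}{-}\gamma_{2})=\mathbf{c}$. A direct unwinding of the definitions of $\mathbf{D}_{2}$ and $\mathbf{D}$, exactly as in the proof of Proposition~\ref{t2.4.2}, shows $\mathbf{D}(\mathbf{D}_{2}(\gamma_{i}))=\mathbf{b}_{12}^{(i)}$ for $i=1,2$. Combining these three equalities yields the desired identity.

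The main obstacle is the bookkeeping in the second paragraph: one must verify that the definition of $\overset{\cdot}{-}$ used in the paper matches the orientation I have chosen between $\gamma_{1},\gamma_{2}$ and the two legs $\varphi,\psi$ of the pushout. Once that convention is pinned down, every other step is a mechanical comparison of Taylor coefficients and presents no serious difficulty.
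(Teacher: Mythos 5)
Your proof is correct and follows essentially the same route as the paper's: Taylor-expand $\gamma_{1},\gamma_{2}$ via Theorem~\ref{t2.4.1}, assemble them into a single element of $\mathbb{E}\otimes\mathcal{W}_{D^{2}\oplus D}$ via the quasi-colimit diagram~(\ref{2.2.4}) and Theorem~\ref{t2.4.1'}, and read off $\mathbf{D}(\gamma_{1}\overset{\cdot}{-}\gamma_{2})$ as the coefficient $\mathbf{c}=\mathbf{b}_{12}^{(1)}-\mathbf{b}_{12}^{(2)}$. Your orientation convention (pullback along $\psi$ gives $\gamma_{1}$, along $\varphi$ gives $\gamma_{2}$) is exactly the one the paper uses, and your bookkeeping in the second paragraph is in fact slightly more careful than the paper's own display.
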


\begin{proof}
Let the Taylor's expansion of $\gamma_{1}$\ be
\[
\left(  x_{1},x_{2}\right)  \in\mathbb{R}^{2}\mapsto\mathbf{a+}x\mathbf{_{1}%
b}_{1}+x_{2}\mathbf{b}_{2}+x_{1}x_{2}\mathbf{b}_{12}\in\mathbb{E}%
\]
with
\[
(\mathbf{a,b}_{1},\mathbf{b}_{2},\mathbf{b}_{12})\in\mathbb{E\times E\times
E\times E}%
\]
and the Taylor's expansion of $\gamma_{2}$\ be
\[
\left(  x_{1},x_{2}\right)  \in\mathbb{R}^{2}\mapsto\mathbf{a+}x\mathbf{_{1}%
b}_{1}+x_{2}\mathbf{b}_{2}+x_{1}x_{2}\mathbf{b}_{12}^{\prime}\in\mathbb{E}%
\]
with $\mathbf{b}_{12}^{\prime}\in\mathbb{E}$ and $\mathbf{a,b}_{1}%
,\mathbf{b}_{2}$ being the same as above. Then the Taylor's expansion of
$\gamma\in\mathbb{E}\otimes\mathcal{W}_{D^{2}\oplus D}$\ with $\left(
\mathrm{id}_{\mathbb{E}}\otimes\mathcal{W}_{\varphi}\right)  \left(
\gamma\right)  =\gamma_{2}$ and $\left(  \mathrm{id}_{\mathbb{E}}%
\otimes\mathcal{W}_{\psi}\right)  \left(  \gamma\right)  =\gamma_{1}$ is
\[
\left(  x_{1},x_{2},x_{3}\right)  \in\mathbb{R}^{3}\mapsto\mathbf{a+}%
x\mathbf{_{1}b}_{1}+x_{2}\mathbf{b}_{2}+x_{1}x_{2}\mathbf{b}_{12}+x_{3}\left(
\mathbf{b}_{12}-\mathbf{b}_{12}^{\prime}\right)  \in\mathbb{E}%
\]
so that
\[
\mathbf{D}\left(  \gamma_{1}\overset{\cdot}{-}\gamma_{2}\right)
=\mathbf{b}_{12}-\mathbf{b}_{12}^{\prime}%
\]
which completes the proof.
\end{proof}

\subsection{ Differential Forms}

We recall the familiar definition.

\begin{definition}
An element $\theta$ of the space $\left[  M\otimes\mathcal{W}_{D^{n}%
}\rightarrow\mathbb{R}\right]  $ is called a (real-valued) differential
$n$-form provided that

\begin{enumerate}
\item $\theta$ is $n$-homogeneous in the sense that
\[
\theta\left(  \alpha\underset{i}{\cdot}\theta\right)  =\alpha\theta(\gamma)
\]
for any $\gamma\in M\otimes W_{D^{n}}$ and any $\alpha\in\mathbb{R}$, where
$\alpha\underset{i}{\cdot}\gamma$ is defined by
\[
\alpha\underset{i}{\cdot}\gamma=\left(  \mathrm{id}_{M}\otimes\mathcal{W}%
_{\left(  \alpha\underset{i}{\cdot}\right)  _{D^{n}}}\right)  (\gamma)
\]
with the putative mapping $\left(  \alpha\underset{i}{\cdot}\right)  _{D^{n}%
}:D^{n}\rightarrow D^{n}$ being
\[
(d_{1},...,d_{n})\in D^{n}\mapsto(d_{1},...,d_{i-1},\alpha d_{i}%
,d_{i+1},...,d_{n})\in D^{n}%
\]

\item $\theta$ is alternating in the sense that
\[
\omega(\gamma^{\sigma})=\epsilon_{\sigma}\omega(\gamma)
\]
for any $\sigma\in\mathbb{S}_{n}$, where $\mathbb{S}_{n}$ is the group of
permutations of $1,...,n$, $\epsilon_{\sigma}$ is the sign of the permutation
$\sigma$, and $\gamma^{\sigma}$ is defined by
\[
\gamma^{\sigma}=(\mathrm{id}_{M}\otimes\mathcal{W}_{\sigma_{D^{n}}})(\gamma)
\]
with the putative mapping $\sigma_{D^{n}}:D^{n}\rightarrow D^{n}$ being
\[
(d_{1},...,d_{n})\in D^{n}\mapsto(d_{\sigma(1)},...,d_{\sigma(n)})\in D^{n}%
\]

\end{enumerate}
\end{definition}

\begin{definition}
By dropping the second condition in the above definition, we get the notion of
a differential $n$-semiform on $M$.
\end{definition}

\begin{notation}
We denote by $\Omega^{n}\left(  M\right)  $ and $\widetilde{\Omega}^{n}\left(
M\right)  $\ the totality of differential $n$-forms on $M$ and that of
differential $n$-semiforms on $M$, respectively. We denote by $\Omega\left(
M\right)  $ and $\widetilde{\Omega}\left(  M\right)  $\ the totality of
differential forms on $M$ and that of differential semiforms on $M$, respectively.
\end{notation}

\begin{definition}
Given $\theta_{1}\in\left[  M\otimes\mathcal{W}_{D^{p}}\rightarrow
\mathbb{R}\right]  $ and $\theta_{2}\in\left[  M\otimes\mathcal{W}_{D^{q}%
}\rightarrow\mathbb{R}\right]  $, we define $\theta_{1}\otimes\theta_{2}%
\in\left[  M\otimes\mathcal{W}_{D^{p+q}}\rightarrow\mathbb{R}\right]  $ to be
\begin{align*}
&  \left(  \theta_{1}\otimes\theta_{2}\right)  \left(  \gamma\right) \\
&  =\theta_{1}\left(  \mathcal{W}_{\left(  d_{1},...,d_{p}\right)  \in
D^{p}\mapsto\left(  d_{1},...,d_{p},0,...,0\right)  \in D^{p+q}}\left(
\gamma\right)  \right)  \theta_{2}\left(  \mathcal{W}_{\left(  d_{1}%
,...,d_{q}\right)  \in D^{q}\mapsto\left(  0,...,0,d_{1},...,d_{q}\right)  \in
D^{p+q}}\left(  \gamma\right)  \right)
\end{align*}

\end{definition}

It is easy to see the following.

\begin{proposition}
\label{t2.5.1}If $\theta_{1}$ is a differential $p$-semiform on $M$ and
$\theta_{2}$ is a differential $q$-semiform on $M$, then $\theta_{1}%
\otimes\theta_{2}$ is a differential $\left(  p+q\right)  $-semiform on $M$.
\end{proposition}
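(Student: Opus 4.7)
The plan is to verify directly that $\theta_{1}\otimes\theta_{2}$ satisfies the $(p+q)$-homogeneity condition. Since a semiform is required only to be homogeneous (and not alternating), this is the only property that needs checking. Fix $\gamma\in M\otimes\mathcal{W}_{D^{p+q}}$, $\alpha\in\mathbb{R}$, and an index $i\in\{1,\dots,p+q\}$. I want to show
\[
(\theta_{1}\otimes\theta_{2})(\alpha\underset{i}{\cdot}\gamma)=\alpha\,(\theta_{1}\otimes\theta_{2})(\gamma).
\]
I will split into the two cases $1\le i\le p$ and $p+1\le i\le p+q$, and in each case reduce the problem to the homogeneity of one of the two factors.

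The essential computation is a functoriality identity at the level of infinitesimal objects. Denote by $\iota_{1}:D^{p}\to D^{p+q}$ the injection $(d_{1},\dots,d_{p})\mapsto(d_{1},\dots,d_{p},0,\dots,0)$ and by $\iota_{2}:D^{q}\to D^{p+q}$ the injection $(d_{1},\dots,d_{q})\mapsto(0,\dots,0,d_{1},\dots,d_{q})$; these are the maps whose $\mathcal{W}$-images appear in the definition of $\theta_{1}\otimes\theta_{2}$. For $1\le i\le p$, one checks by direct inspection that
\[
(\alpha\underset{i}{\cdot})_{D^{p+q}}\circ\iota_{1}=\iota_{1}\circ(\alpha\underset{i}{\cdot})_{D^{p}},\qquad (\alpha\underset{i}{\cdot})_{D^{p+q}}\circ\iota_{2}=\iota_{2},
\]
the second identity holding because rescaling a zero coordinate does nothing. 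For $p+1\le i\le p+q$, writing $j=i-p$, the symmetric identities
\[
(\alpha\underset{i}{\cdot})_{D^{p+q}}\circ\iota_{1}=\iota_{1},\qquad (\alpha\underset{i}{\cdot})_{D^{p+q}}\circ\iota_{2}=\iota_{2}\circ(\alpha\underset{j}{\cdot})_{D^{q}}
\]
hold for the same reason. Applying $\mathrm{id}_{M}\otimes\mathcal{W}_{(\cdot)}$ to these and functoriality turns the compositions into the pullbacks appearing in the definition of $\theta_{1}\otimes\theta_{2}$ evaluated at $\alpha\underset{i}{\cdot}\gamma$.

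Consequently, in the case $1\le i\le p$, the first factor of $(\theta_{1}\otimes\theta_{2})(\alpha\underset{i}{\cdot}\gamma)$ becomes $\theta_{1}$ applied to $\alpha\underset{i}{\cdot}$ of the pullback of $\gamma$ along $\iota_{1}$, which by $p$-homogeneity of $\theta_{1}$ produces the factor $\alpha$, while the second factor is unchanged. The case $p+1\le i\le p+q$ is symmetric, producing the factor $\alpha$ from the $q$-homogeneity of $\theta_{2}$ via the scaling index $j=i-p$, with the first factor unchanged. Either way, exactly one factor of $\alpha$ is extracted, yielding $\alpha\,(\theta_{1}\otimes\theta_{2})(\gamma)$. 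There is no serious obstacle here; the only potentially fiddly point is the bookkeeping of coordinate indices in the two inclusions, which is why I would spell out the two cases separately rather than try to unify them.
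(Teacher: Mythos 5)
Your proof is correct: the two commutation identities between $(\alpha\underset{i}{\cdot})_{D^{p+q}}$ and the inclusions $\iota_{1},\iota_{2}$ are exactly right, and together with the contravariant functoriality of $\mathcal{W}$ they reduce the $(p+q)$-homogeneity of $\theta_{1}\otimes\theta_{2}$ to the homogeneity of the appropriate factor. The paper offers no proof at all (it prefaces the proposition with ``It is easy to see the following''), and your direct case-by-case verification is precisely the argument being taken for granted.
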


\begin{proposition}
\label{t2.5.1'}Given $\theta_{1}\in\left[  M\otimes\mathcal{W}_{D^{p}%
}\rightarrow\mathbb{R}\right]  $, $\theta_{2}\in\left[  M\otimes
\mathcal{W}_{D^{q}}\rightarrow\mathbb{R}\right]  $ and $\theta_{3}\in\left[
M\otimes\mathcal{W}_{D^{r}}\rightarrow\mathbb{R}\right]  $, we have
\[
\left(  \theta_{1}\otimes\theta_{2}\right)  \otimes\theta_{3}=\theta
_{1}\otimes\left(  \theta_{2}\otimes\theta_{3}\right)
\]

\end{proposition}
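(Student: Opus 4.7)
The plan is to show that both sides of the equation, when evaluated at an arbitrary $\gamma\in M\otimes\mathcal{W}_{D^{p+q+r}}$, reduce to the same triple product
\[
\theta_{1}(\gamma_{[1]})\,\theta_{2}(\gamma_{[2]})\,\theta_{3}(\gamma_{[3]}),
\]
where $\gamma_{[1]}$, $\gamma_{[2]}$, $\gamma_{[3]}$ are the restrictions of $\gamma$ to the first $p$, middle $q$, and last $r$ coordinates respectively (that is, the images of $\gamma$ under $\mathrm{id}_{M}\otimes\mathcal{W}_{\iota_{k}}$, with $\iota_{1}:D^{p}\hookrightarrow D^{p+q+r}$ sending $(d_{1},\dots,d_{p})\mapsto(d_{1},\dots,d_{p},0,\dots,0)$ and $\iota_{2}$, $\iota_{3}$ defined analogously). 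Since both associations are clearly equal as real numbers once the $\gamma_{[k]}$ are so identified, the whole proof reduces to checking that unfolding $(\theta_{1}\otimes\theta_{2})\otimes\theta_{3}$ and $\theta_{1}\otimes(\theta_{2}\otimes\theta_{3})$ really does produce these three arguments.

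First I would expand the left-hand side using the definition of $\otimes$ twice: in the outer step, $(\theta_{1}\otimes\theta_{2})\otimes\theta_{3}$ evaluated at $\gamma$ is the product of $(\theta_{1}\otimes\theta_{2})$ applied to the image of $\gamma$ under the ``insert zeros in the last $r$ slots'' map $D^{p+q}\hookrightarrow D^{p+q+r}$, with $\theta_{3}$ applied to the image of $\gamma$ under the ``insert zeros in the first $p+q$ slots'' map $D^{r}\hookrightarrow D^{p+q+r}$. Applying the inner definition then expresses the first factor as $\theta_{1}(\delta_{1})\theta_{2}(\delta_{2})$, where $\delta_{1}$ and $\delta_{2}$ are images under further ``insert zeros'' maps $D^{p}\hookrightarrow D^{p+q}$ and $D^{q}\hookrightarrow D^{p+q}$. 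The key step is then to invoke the functoriality of $\mathcal{W}$, which yields $\mathcal{W}_{f}\circ\mathcal{W}_{g}=\mathcal{W}_{g\circ f}$ (the direction of arrows being reversed on the infinitesimal side). Composing the two inclusions gives exactly $\iota_{1}:D^{p}\hookrightarrow D^{p+q+r}$ and $\iota_{2}:D^{q}\hookrightarrow D^{p+q+r}$, so that $\delta_{1}=\gamma_{[1]}$ and $\delta_{2}=\gamma_{[2]}$.

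The right-hand side is treated symmetrically: one first splits $\gamma$ via the maps $D^{p}\hookrightarrow D^{p+q+r}$ and $D^{q+r}\hookrightarrow D^{p+q+r}$, then splits the second factor further using $D^{q}\hookrightarrow D^{q+r}$ and $D^{r}\hookrightarrow D^{q+r}$; functoriality identifies the composed inclusions with $\iota_{2}$ and $\iota_{3}$. Both sides therefore coincide with $\theta_{1}(\gamma_{[1]})\theta_{2}(\gamma_{[2]})\theta_{3}(\gamma_{[3]})$, which is manifestly associative as a product of real numbers. The main ``obstacle'' is really only notational bookkeeping: keeping track of which block of coordinates is being filled with zeros at each stage and verifying that the two alternative composition orders produce the same inclusion $D^{q}\hookrightarrow D^{p+q+r}$ on the middle block. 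No homogeneity, alternation, or microlinearity hypothesis is needed, which is consistent with the fact that Proposition \ref{t2.5.1'} is stated for arbitrary elements of the function spaces $\left[M\otimes\mathcal{W}_{D^{n}}\to\mathbb{R}\right]$.
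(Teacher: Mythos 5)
Your argument is correct and is precisely the computation the paper has in mind: the paper offers no proof of Proposition \ref{t2.5.1'}, presenting it as ``easy to see,'' and the intended justification is exactly your unfolding of the definition of $\otimes$ twice on each side. The key points you identify --- that contravariant functoriality of $\mathcal{W}$ collapses the two composed zero-insertion inclusions on either side to the same three canonical inclusions $D^{p},D^{q},D^{r}\hookrightarrow D^{p+q+r}$, after which the claim is just associativity of multiplication in $\mathbb{R}$ --- are the whole content, and no homogeneity or alternation hypothesis is needed, as you correctly observe.
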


\begin{remark}
Therefore we can write $\theta_{1}\otimes\theta_{2}\otimes\theta_{3}$ without ambiguity.
\end{remark}

\begin{definition}
Given $\theta\in\left[  M\otimes\mathcal{W}_{D^{p}}\rightarrow\mathbb{R}%
\right]  $, we define $\mathcal{A}\theta\in\left[  M\otimes\mathcal{W}_{D^{p}%
}\rightarrow\mathbb{R}\right]  $\ to be
\[
\mathcal{A}\theta=\sum_{\sigma\in\mathbb{S}_{p}}\varepsilon_{\sigma}%
\theta^{\sigma}%
\]

\end{definition}

\begin{notation}
Given $\theta\in\left[  M\otimes\mathcal{W}_{D^{p+q}}\rightarrow
\mathbb{R}\right]  $, we write $\mathcal{A}_{p,q}\theta$ for
$(1/p!q!)\mathcal{A}\theta$. Given $\theta\in\left[  M\otimes\mathcal{W}%
_{D^{p+q+r}}\rightarrow\mathbb{R}\right]  $, we write $\mathcal{A}%
_{p,q,r}\theta$ for $(1/p!q!r!)\mathcal{A}\theta$.
\end{notation}

\begin{definition}
Given $\theta_{1}\in\left[  M\otimes\mathcal{W}_{D^{p}}\rightarrow
\mathbb{R}\right]  $ and $\theta_{2}\in\left[  M\otimes\mathcal{W}_{D^{q}%
}\rightarrow\mathbb{R}\right]  $, we define $\theta_{1}\wedge\theta_{2}%
\in\left[  M\otimes\mathcal{W}_{D^{p+q}}\rightarrow\mathbb{R}\right]  $ to be
$\mathcal{A}_{p,q}\left(  \theta_{1}\otimes\theta_{2}\right)  $.
\end{definition}

It is easy to see the following.

\begin{proposition}
\label{t2.5.2}If $\theta$ is a differential semiform on $M$, then
$\mathcal{A}\theta$ is a differential form.
\end{proposition}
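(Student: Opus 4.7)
The plan is to verify that $\mathcal{A}\theta=\sum_{\sigma\in\mathbb{S}_p}\varepsilon_{\sigma}\theta^{\sigma}$ satisfies both defining clauses of a differential $p$-form: $p$-homogeneity (in each slot) and the alternating property. Homogeneity of a semiform is already assumed for $\theta$, so the first clause only requires me to show that the operation $\theta\mapsto\theta^{\sigma}$ preserves homogeneity in every slot, after which $p$-homogeneity of $\mathcal{A}\theta$ will follow by $\mathbb{R}$-linearity of the sum.

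For homogeneity, I would compute how the reparametrisation $\alpha\underset{i}{\cdot}\gamma$ interacts with the twisted argument $\gamma^{\sigma}$. Writing everything in terms of $\mathrm{id}_{M}\otimes\mathcal{W}_{\cdot}$ and recalling that $\mathcal{W}$ is contravariant, the combinatorial identity
\[
\left(\alpha\underset{i}{\cdot}\right)_{D^{n}}\circ\sigma_{D^{n}}=\sigma_{D^{n}}\circ\left(\alpha\underset{\sigma(i)}{\cdot}\right)_{D^{n}}
\]
gives $(\alpha\underset{i}{\cdot}\gamma)^{\sigma}=\alpha\underset{\sigma(i)}{\cdot}\gamma^{\sigma}$. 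Homogeneity of $\theta$ in its $\sigma(i)$-th slot then yields $\theta^{\sigma}(\alpha\underset{i}{\cdot}\gamma)=\alpha\,\theta^{\sigma}(\gamma)$, and summing $\varepsilon_{\sigma}$-times over $\sigma$ gives homogeneity of $\mathcal{A}\theta$.

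For the alternating clause, the key ingredient is the composition law $(\theta^{\sigma})^{\tau}=\theta^{\sigma\tau}$, which again follows from contravariance of $\mathcal{W}$ together with the tuple-level computation $\tau_{D^{n}}\circ\sigma_{D^{n}}=(\sigma\tau)_{D^{n}}$. Given $\tau\in\mathbb{S}_{p}$, I would then reindex the sum:
\[
(\mathcal{A}\theta)^{\tau}=\sum_{\sigma\in\mathbb{S}_{p}}\varepsilon_{\sigma}\theta^{\sigma\tau}=\sum_{\rho\in\mathbb{S}_{p}}\varepsilon_{\rho\tau^{-1}}\theta^{\rho}=\varepsilon_{\tau}\sum_{\rho\in\mathbb{S}_{p}}\varepsilon_{\rho}\theta^{\rho}=\varepsilon_{\tau}\mathcal{A}\theta,
\]
using $\varepsilon_{\rho\tau^{-1}}=\varepsilon_{\rho}\varepsilon_{\tau}$, which is precisely the alternating condition.

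The only real obstacle is the bookkeeping around the contravariant functor $\mathcal{W}$, namely making sure that (i) the composition of two reorderings corresponds to the \emph{reversed} composition of permutations, and (ii) the index $i$ gets sent to $\sigma(i)$ (not $\sigma^{-1}(i)$) when one pushes the scalar rescaling $\alpha\underset{i}{\cdot}$ past a permutation. Once these two identities are established in the semiform setting, no further microlinearity or Euclidean structure is needed, and the proof reduces to the two bookkeeping lines above.
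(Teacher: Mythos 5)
Your proposal is correct and is exactly the verification the paper has in mind (the paper omits the proof as ``easy to see''): homogeneity of each $\theta^{\sigma}$ via the commutation identity $\left(\alpha\underset{i}{\cdot}\gamma\right)^{\sigma}=\alpha\underset{\sigma(i)}{\cdot}\gamma^{\sigma}$, and the alternating property via the composition law for $(\cdot)^{\sigma}$ together with reindexing the sum over $\mathbb{S}_{p}$. Both bookkeeping identities you flag check out under the paper's conventions for the contravariant functor $\mathcal{W}$, so nothing is missing.
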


\begin{proposition}
\label{t2.5.2'}Given $\theta_{1}\in\left[  M\otimes\mathcal{W}_{D^{p}%
}\rightarrow\mathbb{R}\right]  $, $\theta_{2}\in\left[  M\otimes
\mathcal{W}_{D^{q}}\rightarrow\mathbb{R}\right]  $ and $\theta_{3}\in\left[
M\otimes\mathcal{W}_{D^{r}}\rightarrow\mathbb{R}\right]  $, we have
\begin{align*}
&  \mathcal{A}_{p,q+r}\left(  \theta_{1}\otimes\mathcal{A}_{q,r}\left(
\theta_{2}\otimes\theta_{3}\right)  \right) \\
&  =\mathcal{A}_{p+q,r}\left(  \mathcal{A}_{p,q}\left(  \theta_{1}%
\otimes\theta_{2}\right)  \otimes\theta_{3}\right) \\
&  =\mathcal{A}_{p,q,r}\left(  \theta_{1}\otimes\theta_{2}\otimes\theta
_{3}\right)
\end{align*}

\end{proposition}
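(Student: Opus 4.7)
The plan is to expand both composite antisymmetrizers back to sums over symmetric groups, recognise that each inner antisymmetrizer ranges over a subgroup of $\mathbb{S}_{p+q+r}$ which fixes some block of indices pointwise, and then identify the double sum with a single sum over $\mathbb{S}_{p+q+r}$ by absorbing the subgroup with the aid of the factorial normalisation.

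Concretely, for the first equality I would first write out
$$\mathcal{A}_{p,q+r}(\theta_{1}\otimes \mathcal{A}_{q,r}(\theta_{2}\otimes\theta_{3})) = \frac{1}{p!(q+r)!\,q!\,r!}\sum_{\sigma\in\mathbb{S}_{p+q+r}}\sum_{\tau\in\mathbb{S}_{q+r}}\varepsilon_{\sigma}\varepsilon_{\tau}\bigl(\theta_{1}\otimes(\theta_{2}\otimes\theta_{3})^{\tau}\bigr)^{\sigma}.$$
The key lemma I would isolate is the following: identify $\tau\in\mathbb{S}_{q+r}$ with its lift $\tilde{\tau}\in\mathbb{S}_{p+q+r}$ acting trivially on $\{1,\dots,p\}$ and by $\tau$ on $\{p+1,\dots,p+q+r\}$. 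Since the definition of $\theta_{1}\otimes\theta_{2}$ from the previous subsection feeds the first $p$ coordinates of $\gamma$ into $\theta_{1}$ and the rest into $\theta_{2}$, and $\tilde{\tau}$ fixes the first $p$ coordinates, one has
$$\theta_{1}\otimes(\theta_{2}\otimes\theta_{3})^{\tau} = (\theta_{1}\otimes\theta_{2}\otimes\theta_{3})^{\tilde{\tau}},$$
where associativity of $\otimes$ (Proposition \ref{t2.5.1'}) is used to parse the right-hand side, and $\varepsilon_{\tilde\tau}=\varepsilon_{\tau}$. Combining with $((\,\cdot\,)^{\tilde\tau})^{\sigma}=(\,\cdot\,)^{\sigma\tilde\tau}$ (up to the sign/composition bookkeeping of $\mathcal{W}_{\sigma_{D^{n}}}$, which is a functorial consistency I would verify explicitly).

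Substituting yields a sum of $\varepsilon_{\sigma\tilde\tau}(\theta_{1}\otimes\theta_{2}\otimes\theta_{3})^{\sigma\tilde\tau}$ indexed by $(\sigma,\tau)\in\mathbb{S}_{p+q+r}\times\mathbb{S}_{q+r}$. For each fixed $\pi\in\mathbb{S}_{p+q+r}$ there are exactly $(q+r)!$ pairs $(\sigma,\tau)$ with $\sigma\tilde\tau=\pi$ (choose $\tau$ freely, then $\sigma$ is determined). The factor $(q+r)!$ cancels the corresponding $(q+r)!$ in the denominator, giving
$$\frac{1}{p!\,q!\,r!}\sum_{\pi\in\mathbb{S}_{p+q+r}}\varepsilon_{\pi}(\theta_{1}\otimes\theta_{2}\otimes\theta_{3})^{\pi} = \mathcal{A}_{p,q,r}(\theta_{1}\otimes\theta_{2}\otimes\theta_{3}).$$
The second equality is handled by the symmetric argument, lifting $\mathbb{S}_{p+q}$ instead as the subgroup of $\mathbb{S}_{p+q+r}$ fixing the last $r$ coordinates, so that $\mathcal{A}_{p,q}(\theta_{1}\otimes\theta_{2})\otimes\theta_{3}$ rewrites as a sum of $(\theta_{1}\otimes\theta_{2}\otimes\theta_{3})^{\tilde{\tau}'}$ over $\tau'\in\mathbb{S}_{p+q}$.

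The routine but delicate step that will demand care is the bookkeeping of the action $\gamma\mapsto\gamma^{\sigma}$: I have to check that under the convention $\gamma^{\sigma}=(\mathrm{id}_{M}\otimes \mathcal{W}_{\sigma_{D^{n}}})(\gamma)$, the assignment $\sigma\mapsto(\,\cdot\,)^{\sigma}$ respects composition in the correct order so that $(\theta^{\tilde\tau})^{\sigma}=\theta^{\sigma\tilde\tau}$ (with matching signs). Once that functoriality is in hand, the proposition reduces to the elementary coset-counting argument sketched above; I expect no other obstacle.
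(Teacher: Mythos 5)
Your argument is correct, and it is precisely the standard coset-counting proof that the paper has in mind when it labels this proposition ``easy to see'' and omits any proof: the inner antisymmetrizer is absorbed as a lift of $\mathbb{S}_{q+r}$ (resp.\ $\mathbb{S}_{p+q}$) into $\mathbb{S}_{p+q+r}$, and the $(q+r)!$ (resp.\ $(p+q)!$) redundancy cancels the extra factorial. The one bookkeeping point you flag does need the check you promise --- with the paper's convention $\gamma^{\sigma}=(\mathrm{id}_{M}\otimes\mathcal{W}_{\sigma_{D^{n}}})(\gamma)$ and the contravariance of $\mathcal{W}$, one actually gets $(\theta^{\tilde{\tau}})^{\sigma}=\theta^{\tilde{\tau}\sigma}$ rather than $\theta^{\sigma\tilde{\tau}}$ --- but since left and right cosets of the lifted subgroup have the same cardinality and $\varepsilon_{\tilde{\tau}\sigma}=\varepsilon_{\tau}\varepsilon_{\sigma}$ either way, the counting and the conclusion are unaffected.
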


\begin{corollary}
Given $\theta_{1}\in\left[  M\otimes\mathcal{W}_{D^{p}}\rightarrow
\mathbb{R}\right]  $, $\theta_{2}\in\left[  M\otimes\mathcal{W}_{D^{q}%
}\rightarrow\mathbb{R}\right]  $ and $\theta_{3}\in\left[  M\otimes
\mathcal{W}_{D^{r}}\rightarrow\mathbb{R}\right]  $, we have
\[
\left(  \theta_{1}\wedge\theta_{2}\right)  \wedge\theta_{3}=\theta_{1}%
\wedge\left(  \theta_{2}\wedge\theta_{3}\right)
\]

\end{corollary}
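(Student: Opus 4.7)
The plan is to read off the corollary almost directly from Proposition \ref{t2.5.2'}, using only the definition of $\wedge$ and the associativity of $\otimes$ from Proposition \ref{t2.5.1'}. All three forms will be shown equal to the common value $\mathcal{A}_{p,q,r}(\theta_1 \otimes \theta_2 \otimes \theta_3)$.

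First I would unfold the left-hand side: by definition of $\wedge$, we have $\theta_1 \wedge \theta_2 = \mathcal{A}_{p,q}(\theta_1 \otimes \theta_2)$, which lives in $[M \otimes \mathcal{W}_{D^{p+q}} \rightarrow \mathbb{R}]$, and hence
\[
(\theta_1 \wedge \theta_2) \wedge \theta_3 = \mathcal{A}_{p+q,r}\bigl(\mathcal{A}_{p,q}(\theta_1 \otimes \theta_2) \otimes \theta_3\bigr).
\]
By the second equality in Proposition \ref{t2.5.2'}, this is equal to $\mathcal{A}_{p,q,r}(\theta_1 \otimes \theta_2 \otimes \theta_3)$, where the triple tensor is unambiguous by Proposition \ref{t2.5.1'}.

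Symmetrically, I would unfold the right-hand side: $\theta_2 \wedge \theta_3 = \mathcal{A}_{q,r}(\theta_2 \otimes \theta_3)$, whence
\[
\theta_1 \wedge (\theta_2 \wedge \theta_3) = \mathcal{A}_{p,q+r}\bigl(\theta_1 \otimes \mathcal{A}_{q,r}(\theta_2 \otimes \theta_3)\bigr),
\]
which by the first equality in Proposition \ref{t2.5.2'} is also equal to $\mathcal{A}_{p,q,r}(\theta_1 \otimes \theta_2 \otimes \theta_3)$. Comparing the two computations yields the desired identity.

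There is no genuine obstacle here, since Proposition \ref{t2.5.2'} was designed precisely to encode the combinatorial content of associativity of the wedge product; the real work lies in that earlier proposition (the counting argument that the alternating sums over $\mathbb{S}_{p+q} \times \mathbb{S}_r$ and over $\mathbb{S}_p \times \mathbb{S}_{q+r}$, with the respective normalizing factors, both reassemble into the fully alternating sum over $\mathbb{S}_{p+q+r}$ with factor $1/p!q!r!$). The only thing to be slightly careful about in the present corollary is that the implicit identifications $\mathcal{W}_{D^{(p+q)+r}} = \mathcal{W}_{D^{p+q+r}} = \mathcal{W}_{D^{p+(q+r)}}$ used when forming $(\theta_1 \wedge \theta_2) \wedge \theta_3$ and $\theta_1 \wedge (\theta_2 \wedge \theta_3)$ are the same identifications used in Proposition \ref{t2.5.1'}, so that both sides really do land in the same function space before the equality is asserted.
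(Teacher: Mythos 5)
Your argument is correct and is exactly the intended derivation: the paper presents this statement as an immediate corollary of Proposition \ref{t2.5.2'}, obtained by unfolding the definition $\theta_i\wedge\theta_j=\mathcal{A}_{\cdot,\cdot}(\theta_i\otimes\theta_j)$ and matching both sides with $\mathcal{A}_{p,q,r}(\theta_1\otimes\theta_2\otimes\theta_3)$. Nothing further is needed.
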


It is easy to see the following two propositions.

\begin{proposition}
\label{t2.5.3}Convenient vector spaces are Euclidean vector spaces which are
microlinear and Weil exponentiable.
\end{proposition}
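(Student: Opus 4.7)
The plan is to dispose of the three constituent claims separately. That any convenient vector space is microlinear was recalled in Subsection 2.2 as a result from \cite{nishi-d}, and that it is Weil exponentiable was recalled as a result from \cite{nishi-c}; accordingly I would simply cite these two facts and concentrate on the genuinely new content, namely that a convenient vector space $\mathbb{E}$ is Euclidean, i.e.\ that the canonical mapping $\mathbf{i}_{\mathbb{E}}^{1} : \mathbb{E} \times \mathbb{E} \to \mathbb{E} \otimes \mathcal{W}_{D}$ is bijective.

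For the Euclidean property I would appeal to the identification, as in \S 28 of \cite{kri}, of the Weil prolongation $\mathbb{E} \otimes \mathcal{W}_{D}$ with the kinematic tangent bundle $T\mathbb{E}$; the compatibility between the Fr\"{o}licher-space construction of Weil prolongation and the kinematic Weil functor of convenient calculus is precisely what is developed in \cite{nishi-c}. Since $\mathbb{E}$ is a convenient vector space, $T\mathbb{E}$ is canonically trivialized as $\mathbb{E} \times \mathbb{E}$ via the (foot point, velocity) pairing, and the inverse of this trivialization sends $(\mathbf{a}, \mathbf{b})$ to the first-order class of the affine curve $x \mapsto \mathbf{a} + x\mathbf{b}$, which is exactly $\mathbf{i}_{\mathbb{E}}^{1}(\mathbf{a}, \mathbf{b})$. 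Hence $\mathbf{i}_{\mathbb{E}}^{1}$ is bijective.

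I would also record the direct two-line verification as a reality check. For injectivity, evaluating $x \mapsto \mathbf{a} + x\mathbf{b}$ at $x = 0$ recovers $\mathbf{a}$, and composing with any bounded linear functional $\ell$ on $\mathbb{E}$ and differentiating at $0$ recovers $\ell(\mathbf{b})$; since bounded linear functionals separate points on a convenient vector space, $\mathbf{b}$ is determined. For surjectivity, any $\gamma \in \mathbb{E} \otimes \mathcal{W}_{D}$ is represented by a smooth curve $\mathbf{c} : \mathbb{R} \to \mathbb{E}$ whose first-order data at $0$ coincide with those of the affine curve $x \mapsto \mathbf{c}(0) + x\mathbf{c}'(0)$, so the latter is the desired preimage.

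The main obstacle is the bookkeeping needed to align the abstract Weil prolongation of a Fr\"{o}licher space with the concrete kinematic tangent bundle used in convenient calculus; once that identification is imported from \cite{nishi-c} and \cite{kri}, the Euclidean property reduces to the triviality of the tangent bundle of a vector space, and the proposition follows by conjoining it with the microlinearity and Weil exponentiability already on record.
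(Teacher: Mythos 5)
The paper offers no proof of this proposition at all (it is introduced with ``It is easy to see the following two propositions''), so there is no written argument to compare against; your decomposition into the three constituent claims is the natural one, and the microlinearity and Weil exponentiability of convenient vector spaces are indeed already on record in Subsection 2.2 with exactly the citations you give. Your argument for the Euclidean property --- identifying $\mathbb{E}\otimes\mathcal{W}_{D}$ with the trivialized tangent bundle $\mathbb{E}\times\mathbb{E}$ via the compatibility, established in \cite{nishi-c}, of the Fr\"{o}licher-space Weil prolongation with the Weil functors of convenient calculus in \cite{kri} --- is correct and is presumably the intended justification.
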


\begin{proposition}
\label{t2.5.4}The spaces $\Omega^{n}\left(  M\right)  $\ and $\widetilde
{\Omega}^{n}\left(  M\right)  $\ are convenient vector spaces.
\end{proposition}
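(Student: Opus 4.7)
The plan is to exhibit both $\widetilde{\Omega}^{n}(M)$ and $\Omega^{n}(M)$ as closed linear subspaces of the convenient vector space $\left[M\otimes\mathcal{W}_{D^{n}}\rightarrow\mathbb{R}\right]$, and then invoke the standard fact (from Kriegl--Michor convenient calculus) that a closed linear subspace of a convenient vector space inherits a convenient structure. Linearity of both subspaces is obvious from the shape of the defining conditions, since the two conditions $\theta(\alpha\underset{i}{\cdot}\gamma)=\alpha\theta(\gamma)$ and $\theta^{\sigma}=\epsilon_{\sigma}\theta$ are linear in $\theta$.

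The first step is to observe that the ambient space $[M\otimes\mathcal{W}_{D^{n}}\to\mathbb{R}]$ is itself convenient. Since $M$ is a Fr\"olicher space and $\mathcal{W}_{D^{n}}$ is a Weil algebra, the Weil prolongation $M\otimes\mathcal{W}_{D^{n}}$ is again a Fr\"olicher space; and for any Fr\"olicher space $X$ the space of smooth real-valued functions on $X$ is convenient when endowed with pointwise linear operations and the initial bornology generated by the evaluations along structure curves. In particular each point-evaluation $\mathrm{ev}_{\gamma}:\theta\mapsto\theta(\gamma)$ is a bounded linear functional.

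The second step treats $\widetilde{\Omega}^{n}(M)$. For each $i\in\{1,\dots,n\}$, each $\alpha\in\mathbb{R}$, and each $\gamma\in M\otimes\mathcal{W}_{D^{n}}$ the functional
\[
L_{i,\alpha,\gamma}(\theta)=\theta\bigl(\alpha\underset{i}{\cdot}\gamma\bigr)-\alpha\,\theta(\gamma)
\]
is a difference of two point-evaluations (the first of them precomposed with the smooth endomorphism $\alpha\underset{i}{\cdot}$ of $M\otimes\mathcal{W}_{D^{n}}$), hence bounded linear on the ambient space. By definition, $\widetilde{\Omega}^{n}(M)$ is the intersection $\bigcap_{i,\alpha,\gamma}\ker L_{i,\alpha,\gamma}$, which is a closed linear subspace and therefore convenient. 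The third step treats $\Omega^{n}(M)$ analogously: for each $\sigma\in\mathbb{S}_{n}$ the map $\theta\mapsto\theta^{\sigma}-\epsilon_{\sigma}\theta$ is bounded linear (its evaluation at any $\gamma$ is again a difference of two point-evaluations, one of them precomposed with $\mathrm{id}_{M}\otimes\mathcal{W}_{\sigma_{D^{n}}}$), so $\Omega^{n}(M)$ is the intersection of the kernels of these maps within $\widetilde{\Omega}^{n}(M)$, and hence is itself convenient.

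The only delicate ingredient is the very first assertion that $[M\otimes\mathcal{W}_{D^{n}}\to\mathbb{R}]$ is convenient; once this is granted, everything else is completely formal. This relies on the cartesian-closed structure of $\mathbf{FS}$ together with the agreement, for convenient targets, between the Fr\"olicher and Kriegl--Michor notions of smoothness, rather than on anything peculiar to $M$ or to the order $n$. I do not expect any serious obstacle beyond this citation to the foundational theory.
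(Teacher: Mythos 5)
Your argument is correct and is essentially the canonical one: the paper itself offers no proof (Propositions \ref{t2.5.3} and \ref{t2.5.4} are merely prefaced by ``it is easy to see''), and realizing $\widetilde{\Omega}^{n}(M)$ and $\Omega^{n}(M)$ as $c^{\infty}$-closed linear subspaces of the convenient space $\left[M\otimes\mathcal{W}_{D^{n}}\rightarrow\mathbb{R}\right]$, cut out by kernels of bounded linear functionals built from point evaluations, is exactly the intended route. The only citation you need to make precise is the Fr\"olicher--Kriegl fact that $\left[X\rightarrow\mathbb{R}\right]$ is convenient for any Fr\"olicher space $X$ (it embeds as a closed subspace of a product of copies of $C^{\infty}(\mathbb{R},\mathbb{R})$ indexed by the structure curves), after which the rest is formal as you say.
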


Therefore we have

\begin{proposition}
\label{t2.5.5}The spaces $\Omega^{n}\left(  M\right)  $\ and $\widetilde
{\Omega}^{n}\left(  M\right)  $\ are Euclidean vector spaces which are microlinear.
\end{proposition}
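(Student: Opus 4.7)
The plan is to derive Proposition \ref{t2.5.5} as an immediate corollary of the two propositions that directly precede it, with essentially no additional work. Proposition \ref{t2.5.4} tells us that both $\Omega^{n}(M)$ and $\widetilde{\Omega}^{n}(M)$ sit in the category of convenient vector spaces, and Proposition \ref{t2.5.3} asserts that every convenient vector space is simultaneously Euclidean, microlinear, and Weil exponentiable. Chaining the two yields the Euclidean and microlinear portions of the claim (and in fact also yields Weil exponentiability, which is stronger than what is being asserted here but will presumably be useful in the sections to follow).

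Concretely, I would just write: let $\mathbb{E}$ denote either $\Omega^{n}(M)$ or $\widetilde{\Omega}^{n}(M)$. By Proposition \ref{t2.5.4}, $\mathbb{E}$ is a convenient vector space. By Proposition \ref{t2.5.3}, every convenient vector space is a microlinear Euclidean vector space, so $\mathbb{E}$ inherits both properties. No diagram chase and no checking of the canonical bijection $\mathbf{i}_{\mathbb{E}}^{1}\colon \mathbb{E}\times\mathbb{E}\to\mathbb{E}\otimes\mathcal{W}_{D}$ is needed at this point, because that verification is precisely what Proposition \ref{t2.5.3} packages for us.

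There is no genuine obstacle here; the substance of the argument is hidden in Proposition \ref{t2.5.4}, whose proof in turn relies on the standard fact that a closed linear subspace (cut out by the homogeneity and alternation equations) of a space of smooth maps between convenient vector spaces is again convenient. Since Proposition \ref{t2.5.4} is stated as an earlier result that we are free to invoke, the proof of Proposition \ref{t2.5.5} reduces to a one-line citation of Propositions \ref{t2.5.3} and \ref{t2.5.4}.
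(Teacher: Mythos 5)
Your proposal is correct and is exactly the paper's argument: the paper introduces Proposition \ref{t2.5.5} with the words ``Therefore we have,'' meaning it is obtained by chaining Proposition \ref{t2.5.4} (the form spaces are convenient vector spaces) with Proposition \ref{t2.5.3} (convenient vector spaces are Euclidean and microlinear). Nothing further is needed.
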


\section{Tangent-Vector-Valued Differential Forms}

Our two distinct but equivalent viewpoints of tangent-vector-valued
differential forms on $M$ are based upon the following exponential law:
\begin{align*}
&  \left[  M\otimes\mathcal{W}_{D^{p}}\rightarrow M\otimes\mathcal{W}%
_{D}\right] \\
&  =\left[  M\otimes\mathcal{W}_{D^{p}}\rightarrow M\right]  \otimes
\mathcal{W}_{D}%
\end{align*}
If $p=0$, the above law degenerates into the corresponding one in \S \ref{2.1}.

The first viewpoint, which is highly orthodox, goes as follows.

\begin{definition}
A tangent-vector-valued $p$\textit{-form on} $M$ is a mapping $\xi
:M\otimes\mathcal{W}_{D^{p}}\rightarrow M\otimes\mathcal{W}_{D}$ subject to
the following three conditions:

\begin{enumerate}
\item We have
\[
\pi_{M}^{M\otimes\mathcal{W}_{D^{p}}}(\gamma)=\pi_{M}^{M\otimes\mathcal{W}%
_{D}}(\xi(\gamma))
\]
for any $\gamma\in M\otimes\mathcal{W}_{D^{p}}$.

\item We have
\[
\xi(\alpha\underset{i}{\cdot}\gamma)=\alpha\xi(\gamma)
\]
for any $\alpha\in\mathbb{R}$, any $\gamma\in M\otimes\mathcal{W}_{D^{p}} $
and any natural number $i$ with $1\leq i\leq p$.

\item We have
\[
\xi(\gamma^{\sigma})=\varepsilon_{\sigma}\xi(\gamma)
\]
for any $\gamma\in M\otimes\mathcal{W}_{D^{p}}$ and any $\sigma\in
\mathbb{S}_{p}$.
\end{enumerate}

By dropping the third condition, we get the weaker notion of a
tangent-vector-valued $p$\textit{-semiform on} $M$.
\end{definition}

The other viewpoint, which is highly radical, goes as follows.

\begin{definition}
A tangent-vector-valued $p$\textit{-form on} $M$ is an element $\xi\in\left[
M\otimes\mathcal{W}_{D^{p}}\rightarrow M\right]  \otimes\mathcal{W}_{D}$
pursuant to the following three conditions:

\begin{enumerate}
\item We have
\[
\pi_{\left[  M\otimes\mathcal{W}_{D^{p}}\rightarrow M\right]  }^{\left[
M\otimes\mathcal{W}_{D^{p}}\rightarrow M\right]  \otimes\mathcal{W}_{D}%
}\left(  \xi\right)  =\delta_{M}^{p}%
\]
where $\pi_{\left[  M\otimes\mathcal{W}_{D^{p}}\rightarrow M\right]
}^{\left[  M\otimes\mathcal{W}_{D^{p}}\rightarrow M\right]  \otimes
\mathcal{W}_{D}}:\left[  M\otimes\mathcal{W}_{D^{p}}\rightarrow M\right]
\otimes\mathcal{W}_{D}\rightarrow\left[  M\otimes\mathcal{W}_{D^{p}%
}\rightarrow M\right]  $ is the canonical projection, and $\delta_{M}^{p}$,
called a ($p$-dimensional) Dirac distribution on $M$, denotes the canonical
projection $\pi_{M}^{M\otimes\mathcal{W}_{D^{p}}}:M\otimes\mathcal{W}_{D^{p}%
}\rightarrow M$.

\item We have
\[
\left(  \left(  \left(  \alpha\underset{i}{\cdot}\right)  _{M\otimes
\mathcal{W}_{D^{p}}}\right)  ^{\ast}\otimes\mathrm{id}_{\mathcal{W}_{D}%
}\right)  \left(  \xi\right)  =\alpha\xi
\]

\item We have
\[
\left(  \left(  \left(  \cdot^{\sigma}\right)  _{M\otimes\mathcal{W}_{D^{p}}%
}\right)  ^{\ast}\otimes\mathrm{id}_{\mathcal{W}_{D}}\right)  \left(
\xi\right)  =\varepsilon_{\sigma}\xi
\]

\end{enumerate}

By dropping the third condition, we get the weaker notion of a
tangent-vector-valued $p$\textit{-semiform on} $M$.
\end{definition}

The following proposition is simple but very important and highly useful.

\begin{proposition}
\label{t3.1}The addition for tangent-vector-valued\textit{\ }$p$%
\textit{-semiforms on} $M$ in the first sense (i.e., using the fiberwise
addition of the vector bundle $M\otimes\mathcal{W}_{D}\rightarrow M$) and that
in the second sense (i.e., as the addition of tangent vectors to the space
$[M\otimes\mathcal{W}_{D^{p}}\rightarrow M]$ at $\delta_{M}^{p}$) coincide.
\end{proposition}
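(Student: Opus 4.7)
The plan is to show that both additions factor through the same canonical map induced by the diagonal $\Delta: D \to D(2)$, $d \mapsto (d,d)$ (well-defined since $d^{2}=0$). Write $\mathcal{W}_{\Delta}: \mathcal{W}_{D(2)} \to \mathcal{W}_{D}$ for the corresponding Weil-algebra morphism, and note that $\mathcal{W}_{D(2)}$ is the pullback $\mathcal{W}_{D} \times_{\mathbb{R}} \mathcal{W}_{D}$ along the two augmentations; consequently, for any microlinear Fr\"olicher space $X$ one has $X \otimes \mathcal{W}_{D(2)} = (X \otimes \mathcal{W}_{D}) \times_{X} (X \otimes \mathcal{W}_{D})$.

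First I would recast the two addition operations in this common language. For the first (fiberwise) addition, two semiforms $\xi_{1}, \xi_{2}: M \otimes \mathcal{W}_{D^{p}} \to M \otimes \mathcal{W}_{D}$ with the same projection to $M$ factor uniquely through the above pullback as a pairing $\langle \xi_{1}, \xi_{2} \rangle: M \otimes \mathcal{W}_{D^{p}} \to M \otimes \mathcal{W}_{D(2)}$, and the fiberwise sum is by definition $(M \otimes \mathcal{W}_{\Delta}) \circ \langle \xi_{1}, \xi_{2} \rangle$. For the second addition, let $X := [M \otimes \mathcal{W}_{D^{p}} \to M]$; this Fr\"olicher space inherits microlinearity from $M$ by Weil exponentiability, so the two tangent vectors $\xi_{1}, \xi_{2} \in X \otimes \mathcal{W}_{D}$ with common foot $\delta_{M}^{p}$ assemble into an element $(\xi_{1}, \xi_{2}) \in X \otimes \mathcal{W}_{D(2)}$, and their sum as tangent vectors at $\delta_{M}^{p}$ is $(\mathrm{id}_{X} \otimes \mathcal{W}_{\Delta})(\xi_{1}, \xi_{2})$.

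The final step is to invoke the exponential law
\[
[M \otimes \mathcal{W}_{D^{p}} \to M] \otimes W = [M \otimes \mathcal{W}_{D^{p}} \to M \otimes W],
\]
a special case of (\ref{2.2.3}), applied naturally in $W \in \mathbf{W}$. Taking $W = \mathcal{W}_{D}$ identifies the ambient space of $\xi_{1}, \xi_{2}$ in the two viewpoints (this is precisely the equivalence set up at the beginning of Section 3); taking $W = \mathcal{W}_{D(2)}$, together with the fact that the law commutes with the finite pullback realising $\mathcal{W}_{D(2)}$, identifies the intrinsic pair $(\xi_{1}, \xi_{2})$ of the second viewpoint with the external pairing $\langle \xi_{1}, \xi_{2} \rangle$ of the first; and naturality in the Weil morphism $\mathcal{W}_{\Delta}$ makes $\mathrm{id}_{X} \otimes \mathcal{W}_{\Delta}$ correspond exactly to post-composition with $M \otimes \mathcal{W}_{\Delta}$. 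Chaining these three identifications yields the coincidence of the two sums.

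The main obstacle I anticipate is the bookkeeping needed to justify that (\ref{2.2.3}) really commutes with the finite pullback encoding $\mathcal{W}_{D(2)}$, i.e.\ that the "pairing of two semiforms" of the first viewpoint genuinely transports to the "pair of two tangent vectors at a common foot point" of the second. This is a naturality/limit-preservation check rather than a new geometric input, and reduces to the product-preserving property of $\cdot \otimes W$ together with the microlinearity of $X$, both recorded in the preliminaries.
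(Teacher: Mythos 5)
Your proposal is correct and follows essentially the same route as the paper: the paper's proof consists precisely of invoking the exponential law $[M\otimes\mathcal{W}_{D^{p}}\rightarrow M\otimes\mathcal{W}_{D(2)}]=[M\otimes\mathcal{W}_{D^{p}}\rightarrow M]\otimes\mathcal{W}_{D(2)}$ and leaving the remaining details to the reader. What you have written (encoding both additions via the pullback presentation of $\mathcal{W}_{D(2)}$ and the map induced by $d\mapsto(d,d)$, then transporting one to the other by naturality of the exponential law) is exactly the intended filling-in of those details.
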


\begin{proof}
This follows mainly from the following exponential law:
\begin{align*}
\lbrack M\otimes\mathcal{W}_{D^{p}}  &  \rightarrow M\otimes\mathcal{W}%
_{D(2)}]\\
&  =[M\otimes\mathcal{W}_{D^{p}}\rightarrow M]\otimes\mathcal{W}_{D(2)}%
\end{align*}
The details can safely be left to the reader.
\end{proof}

Unless stated to the contrary, we will use the terms
\textit{tangent-vector-valued }$p$\textit{-semiforms on} $M$ and
\textit{tangent-vector-valued }$p$\textit{-forms on} $M$ in the second sense.

\section{The Fr\"{o}licher-Nijenhuis Bracket}

\subsection{The Jacobi Identity for the Lie Bracket}

Let us begin this subsection with the following definition.

\begin{definition}
Given $\eta_{1}\in\left[  M\otimes\mathcal{W}_{D^{p}}\rightarrow M\right]  $
and $\eta_{2}\in\left[  M\otimes\mathcal{W}_{D^{q}}\rightarrow M\right]  $,
two kinds of convolution, both of which belong to $\left[  M\otimes
\mathcal{W}_{D^{p+q}}\rightarrow M\right]  $, are defined. The first, to be
denoted by $\eta_{1}\ast\eta_{2}$, is defined to be
\begin{align*}
M\otimes\mathcal{W}_{D^{p+q}}  &  =M\otimes\left(  \mathcal{W}_{D^{p}}%
\otimes_{\infty}\mathcal{W}_{D^{q}}\right)  =M\otimes\left(  \mathcal{W}%
_{D^{q}}\otimes_{\infty}\mathcal{W}_{D^{p}}\right) \\
&  =\left(  M\otimes\mathcal{W}_{D^{q}}\right)  \otimes\mathcal{W}_{D^{p}}
\begin{array}
[c]{c}%
\eta_{2}\otimes\mathrm{id}_{\mathcal{W}_{D^{p}}}\\
\rightarrow\\
\,
\end{array}
M\otimes\mathcal{W}_{D^{p}}
\begin{array}
[c]{c}%
\eta_{1}\\
\rightarrow\\
\,
\end{array}
M
\end{align*}
The second, to be denoted by $\eta_{1}\widetilde{\ast}\eta_{2}$, is defined to
be
\begin{align*}
M\otimes\mathcal{W}_{D^{p+q}}  &  =M\otimes\left(  \mathcal{W}_{D^{p}}%
\otimes_{\infty}\mathcal{W}_{D^{q}}\right) \\
&  =\left(  M\otimes\mathcal{W}_{D^{p}}\right)  \otimes\mathcal{W}_{D^{q}}
\begin{array}
[c]{c}%
\eta_{1}\otimes\mathrm{id}_{\mathcal{W}_{D^{q}}}\\
\rightarrow\\
\,
\end{array}
M\otimes\mathcal{W}_{D^{q}}
\begin{array}
[c]{c}%
\eta_{2}\\
\rightarrow\\
\,
\end{array}
M
\end{align*}

\end{definition}

\begin{remark}
\thinspace

\begin{enumerate}
\item Our two convolutions are reminiscent of the familiar ones in abstract
harmonic analysis and the theory of Schwartz distributions.

\item If $p=q=0$, then
\[
M\otimes\mathcal{W}_{D^{p}}=M\otimes\mathcal{W}_{D^{q}}=M\otimes
\mathcal{W}_{D^{p+q}}=M
\]
so that
\[
\left[  M\otimes\mathcal{W}_{D^{p}}\rightarrow M\right]  =\left[
M\otimes\mathcal{W}_{D^{q}}\rightarrow M\right]  =\left[  M\otimes
\mathcal{W}_{D^{p+q}}\rightarrow M\right]  =[M\rightarrow M]
\]
in which we have
\begin{align*}
\eta_{1}\ast\eta_{2}  &  =\eta_{1}\circ\eta_{2}\\
\eta_{1}\widetilde{\ast}\eta_{2}  &  =\eta_{2}\circ\eta_{1}%
\end{align*}

\end{enumerate}
\end{remark}

\begin{notation}
Given $\sigma\in\mathbb{S}_{p}$ and $\eta\in\left[  M\otimes\mathcal{W}%
_{D^{p}}\rightarrow M\right]  $, we let $\eta^{\sigma}$ denote
\[
\eta\circ\left(  \mathrm{id}_{M}\otimes\mathcal{W}_{\left(  d_{1}%
,...,d_{p}\right)  \in D^{p}\mapsto\left(  d_{\sigma\left(  1\right)
},...,d_{\sigma\left(  p\right)  }\right)  \in D^{p}}\right)
\]

\end{notation}

It should be obvious that

\begin{proposition}
\label{t4.1.1}Given $\eta_{1}\in\left[  M\otimes\mathcal{W}_{D^{p}}\rightarrow
M\right]  $ and $\eta_{2}\in\left[  M\otimes\mathcal{W}_{D^{q}}\rightarrow
M\right]  $, we have
\begin{align*}
\left(  \eta_{2}\ast\eta_{1}\right)  ^{\sigma_{p,q}}  &  =\eta_{1}%
\widetilde{\ast}\eta_{2}\\
\left(  \eta_{2}\widetilde{\ast}\eta_{1}\right)  ^{\sigma_{p,q}}  &  =\eta
_{1}\ast\eta_{2}%
\end{align*}
where $\sigma_{p,q}$ is the permutation mapping the sequence
$1,...,q,q+1,...,p+q$ to the sequence $q+1,...,p+q,1,...,q$, namely,
\[
\sigma_{p,q}=\left(
\begin{array}
[c]{cccccc}%
1 & ... & p & p+1 & ... & p+q\\
q+1 & ... & p+q & 1 & ... & q
\end{array}
\right)
\]

\end{proposition}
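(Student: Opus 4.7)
The plan is to verify the two identities by direct comparison at the level of ``generalized maps'' $\gamma : D^{p+q} \to M$ (i.e., elements of $M \otimes \mathcal{W}_{D^{p+q}}$, viewed intuitively as SDG functions into $M$ from the infinitesimal object $D^{p+q}$). The two identities are completely symmetric (one is obtained from the other by swapping $\ast$ and $\widetilde{\ast}$ throughout), so I would work out the first, $(\eta_2 \ast \eta_1)^{\sigma_{p,q}} = \eta_1 \widetilde{\ast} \eta_2$, in detail and then recycle the argument for the second.

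First I would unfold both defining composites explicitly. A routine chase through the definition of $\widetilde{\ast}$, using $\mathcal{W}_{D^{p+q}} = \mathcal{W}_{D^p} \otimes_\infty \mathcal{W}_{D^q}$ (assigning the first $p$ coordinates to $\mathcal{W}_{D^p}$ and the last $q$ to $\mathcal{W}_{D^q}$), gives
\[
(\eta_1 \widetilde{\ast} \eta_2)(\gamma) = \eta_2\bigl((a_1,\ldots,a_q) \mapsto \eta_1\bigl((b_1,\ldots,b_p) \mapsto \gamma(b_1,\ldots,b_p,a_1,\ldots,a_q)\bigr)\bigr).
\]
Unfolding $\eta_2 \ast \eta_1$ analogously, the defining composite begins with $\mathcal{W}_{D^{q+p}} = \mathcal{W}_{D^q} \otimes_\infty \mathcal{W}_{D^p}$ (first $q$ coordinates as $\mathcal{W}_{D^q}$) followed by the $\otimes_\infty$-symmetry and associativity, and yields
\[
(\eta_2 \ast \eta_1)(\gamma) = \eta_2\bigl((a_1,\ldots,a_q) \mapsto \eta_1\bigl((b_1,\ldots,b_p) \mapsto \gamma(a_1,\ldots,a_q,b_1,\ldots,b_p)\bigr)\bigr).
\]
The two formulas differ only in the ordering of the $a$'s and $b$'s inside $\gamma$'s argument list.

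Finally I would compute $(\eta_2 \ast \eta_1)^{\sigma_{p,q}}(\gamma) = (\eta_2 \ast \eta_1)(\gamma')$, where, by the definition of the $\sigma$-action together with the formula for $\sigma_{p,q}$ given in the statement, $\gamma'(d_1,\ldots,d_{p+q}) = \gamma(d_{\sigma_{p,q}(1)},\ldots,d_{\sigma_{p,q}(p+q)}) = \gamma(d_{q+1},\ldots,d_{p+q},d_1,\ldots,d_q)$. Substituting into the formula for $\eta_2 \ast \eta_1$ with $\gamma$ replaced by $\gamma'$ converts $\gamma'(a_1,\ldots,a_q,b_1,\ldots,b_p)$ into $\gamma(b_1,\ldots,b_p,a_1,\ldots,a_q)$, which recovers precisely the formula for $\eta_1 \widetilde{\ast} \eta_2$. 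The main obstacle is purely bookkeeping: pinning down which coordinates become which ``inner/outer'' variables after each of the three canonical identifications built into the definition of $\ast$, and checking that $\sigma_{p,q}$ exactly undoes this reordering. Once that is settled, the second identity follows by the same bookkeeping argument with the operational order reversed (i.e., with $\ast$ and $\widetilde{\ast}$ interchanged).
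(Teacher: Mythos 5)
Your verification is correct: the paper states this proposition with no proof at all (it is prefaced by ``It should be obvious that''), and your coordinate-level unwinding of $\ast$, $\widetilde{\ast}$ and the $\sigma_{p,q}$-action is exactly the bookkeeping the author leaves to the reader. The key check --- that $\gamma^{\sigma_{p,q}}(a_1,\ldots,a_q,b_1,\ldots,b_p)=\gamma(b_1,\ldots,b_p,a_1,\ldots,a_q)$, which converts the formula for $\eta_2\ast\eta_1$ into that for $\eta_1\widetilde{\ast}\eta_2$ --- is carried out correctly, and the second identity follows symmetrically as you say.
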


It should also be obvious that

\begin{proposition}
\label{t4.1.2}Given $\eta_{1}\in\left[  M\otimes\mathcal{W}_{D^{p}}\rightarrow
M\right]  $, $\eta_{2}\in\left[  M\otimes\mathcal{W}_{D^{q}}\rightarrow
M\right]  $ and $\eta_{3}\in\left[  M\otimes\mathcal{W}_{D^{r}}\rightarrow
M\right]  $, we have
\begin{align*}
(\eta_{1}\ast\eta_{2})\ast\eta_{3}  &  =\eta_{1}\ast(\eta_{2}\ast\eta_{3})\\
(\eta_{1}\widetilde{\ast}\eta_{2})\widetilde{\ast}\eta_{3}  &  =\eta
_{1}\widetilde{\ast}(\eta_{2}\widetilde{\ast}\eta_{3})
\end{align*}

\end{proposition}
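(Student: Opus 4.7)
The plan is to unfold the definition of each convolution on both sides and verify that they produce the same triple composite after identifying $M\otimes\mathcal{W}_{D^{p+q+r}}$ with its three-fold split via the associativity of $\otimes_\infty$ in $\mathbf{W}$ together with Weil exponentiability (\ref{2.2.2}). Concretely, for the first identity I would show that both $(\eta_{1}\ast\eta_{2})\ast\eta_{3}$ and $\eta_{1}\ast(\eta_{2}\ast\eta_{3})$ are equal to the composite
\[
((M\otimes\mathcal{W}_{D^{r}})\otimes\mathcal{W}_{D^{q}})\otimes\mathcal{W}_{D^{p}}
\xrightarrow{(\eta_{3}\otimes\mathrm{id})\otimes\mathrm{id}}
(M\otimes\mathcal{W}_{D^{q}})\otimes\mathcal{W}_{D^{p}}
\xrightarrow{\eta_{2}\otimes\mathrm{id}}
M\otimes\mathcal{W}_{D^{p}}
\xrightarrow{\eta_{1}} M,
\]
where the source is identified with $M\otimes\mathcal{W}_{D^{p+q+r}}$ through $\mathcal{W}_{D^{p+q+r}}=\mathcal{W}_{D^{p}}\otimes_{\infty}\mathcal{W}_{D^{q}}\otimes_{\infty}\mathcal{W}_{D^{r}}$.

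For the left-hand side this is almost immediate: inserting the definition of $\eta_{1}\ast\eta_{2}$ inside the outer $\ast\eta_{3}$ and then regrouping $\mathcal{W}_{D^{p+q}}\otimes_{\infty}\mathcal{W}_{D^{r}}$ as $(\mathcal{W}_{D^{q}}\otimes_{\infty}\mathcal{W}_{D^{r}})\otimes_{\infty}\mathcal{W}_{D^{p}}$ puts it directly in the form above. For the right-hand side, the key input is the bifunctoriality of Weil prolongation: the morphism $(\eta_{2}\ast\eta_{3})\otimes\mathrm{id}_{\mathcal{W}_{D^{p}}}$ factors, by functoriality of the endofunctor $\cdot\otimes\mathcal{W}_{D^{p}}$ on $\mathbf{FS}$, as
\[
(\eta_{2}\otimes\mathrm{id}_{\mathcal{W}_{D^{p}}})\circ\bigl((\eta_{3}\otimes\mathrm{id}_{\mathcal{W}_{D^{q}}})\otimes\mathrm{id}_{\mathcal{W}_{D^{p}}}\bigr),
\]
so the composite defining $\eta_{1}\ast(\eta_{2}\ast\eta_{3})$ also collapses to the display above.

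The second identity is entirely parallel: one splits instead as $M\otimes\mathcal{W}_{D^{p+q+r}}=((M\otimes\mathcal{W}_{D^{p}})\otimes\mathcal{W}_{D^{q}})\otimes\mathcal{W}_{D^{r}}$, and shows both sides equal
\[
((M\otimes\mathcal{W}_{D^{p}})\otimes\mathcal{W}_{D^{q}})\otimes\mathcal{W}_{D^{r}}
\xrightarrow{(\eta_{1}\otimes\mathrm{id})\otimes\mathrm{id}}
(M\otimes\mathcal{W}_{D^{q}})\otimes\mathcal{W}_{D^{r}}
\xrightarrow{\eta_{2}\otimes\mathrm{id}}
M\otimes\mathcal{W}_{D^{r}}
\xrightarrow{\eta_{3}} M,
\]
again by invoking functoriality of $\cdot\otimes\mathcal{W}_{D^{r}}$ to distribute Weil prolongation across the composition defining $\eta_{1}\widetilde{\ast}\eta_{2}$.

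The only real obstacle is notational bookkeeping: one must check that the chain of identifications furnished by the associativity and commutativity constraints of $\otimes_{\infty}$ together with the instances of (\ref{2.2.2}) are applied consistently on both sides. Since Weil prolongation is a genuine bifunctor $\mathbf{FS}\times\mathbf{W}\to\mathbf{FS}$ and $\otimes_{\infty}$ is associative, the requisite coherence is automatic, so the verification reduces to comparing two diagrams that differ only by which associator one chooses to insert first. This is why the author introduces the claim with ``It should also be obvious.''
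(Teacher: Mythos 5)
Your argument is correct and is precisely the routine unfolding that the paper leaves implicit (it states this proposition with ``It should also be obvious that'' and supplies no proof): both sides reduce to the same triple composite via the associativity of $\otimes_{\infty}$, the identification (\ref{2.2.2}), and the functoriality of $\cdot\otimes W$. Your explicit tracking of the commutativity constraint in the $\ast$ case and the factorization $(\eta_{2}\ast\eta_{3})\otimes\mathrm{id}=(\eta_{2}\otimes\mathrm{id})\circ((\eta_{3}\otimes\mathrm{id})\otimes\mathrm{id})$ is exactly the content being taken for granted.
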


\begin{remark}
This proposition enables us to write, e.g., $\eta_{1}\ast\eta_{2}\ast\eta_{3}
$ without parentheses in place of $(\eta_{1}\ast\eta_{2})\ast\eta_{3}$ or
$\eta_{1}\ast(\eta_{2}\ast\eta_{3})$. Similarly for $\eta_{1}\widetilde{\ast
}\eta_{2}\widetilde{\ast}\eta_{3}$.
\end{remark}

\begin{definition}
The canonical projection $\pi_{M}^{M\otimes\mathcal{W}_{D^{p}}}:M\otimes
\mathcal{W}_{D^{p}}\rightarrow M$ is called a ($p$-dimensional) Dirac
distribution on $M$, which is to be denoted by $\delta_{M}^{p}$
\end{definition}

The following proposition should be obvious.

\begin{proposition}
\label{t4.1.3}If one of $\eta_{1}\in\left[  M\otimes\mathcal{W}_{D^{p}%
}\rightarrow M\right]  $ and $\eta_{2}\in\left[  M\otimes\mathcal{W}_{D^{q}%
}\rightarrow M\right]  $ is a Dirac distribution, then $\eta_{1}\ast\eta_{2}$
and $\eta_{1}\widetilde{\ast}\eta_{2}$ coincide. In particular, if both of
$\eta_{1}$ and $\eta_{2}$ are Dirac distributions, then $\eta_{1}\ast\eta
_{2}=\eta_{1}\widetilde{\ast}\eta_{2}$ is also a Dirac distribution.
\end{proposition}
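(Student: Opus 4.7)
The plan is to unfold both convolutions as explicit composites of smooth maps and then invoke the naturality of the canonical projection $\pi$ (equivalently, the bifunctoriality of Weil prolongation) to identify them. The key observation is that a Dirac distribution $\delta_M^p=\pi_M^{M\otimes\mathcal{W}_{D^p}}$ is nothing but the prolongation along $M$ of the Weil algebra unit $\mathbb{R}\hookrightarrow\mathcal{W}_{D^p}$, so postcomposing with it amounts to ``killing'' the $\mathcal{W}_{D^p}$-factor of whatever Weil prolongation it acts on.

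Assume first that $\eta_1=\delta_M^p$. Unfolding, $\eta_1\ast\eta_2$ is the composite $(M\otimes\mathcal{W}_{D^q})\otimes\mathcal{W}_{D^p}\xrightarrow{\eta_2\otimes\mathrm{id}}M\otimes\mathcal{W}_{D^p}\xrightarrow{\delta_M^p}M$; the naturality square for $\pi$ applied to $\eta_2\colon M\otimes\mathcal{W}_{D^q}\to M$ rewrites this as $\eta_2\circ\pi_{M\otimes\mathcal{W}_{D^q}}^{(M\otimes\mathcal{W}_{D^q})\otimes\mathcal{W}_{D^p}}$. Meanwhile $\eta_1\widetilde{\ast}\eta_2$ is by definition $(M\otimes\mathcal{W}_{D^p})\otimes\mathcal{W}_{D^q}\xrightarrow{\delta_M^p\otimes\mathrm{id}}M\otimes\mathcal{W}_{D^q}\xrightarrow{\eta_2}M$. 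Both expressions end in $\eta_2$, so I need only check that the two preceding maps $M\otimes\mathcal{W}_{D^{p+q}}\to M\otimes\mathcal{W}_{D^q}$ coincide. Tracing each back through the canonical identifications $\mathcal{W}_{D^{p+q}}=\mathcal{W}_{D^p}\otimes_{\infty}\mathcal{W}_{D^q}=\mathcal{W}_{D^q}\otimes_{\infty}\mathcal{W}_{D^p}$, both are induced by the Weil algebra morphism pairing the unit $\mathbb{R}\hookrightarrow\mathcal{W}_{D^p}$ against $\mathrm{id}_{\mathcal{W}_{D^q}}$, so by symmetry of $\otimes_{\infty}$ and the bifunctoriality of Weil prolongation they agree. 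The case $\eta_2=\delta_M^q$ is entirely symmetric, pushing the Dirac projection past $\eta_1$ on the other side.

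For the ``in particular'' clause, setting $\eta_1=\delta_M^p$ and $\eta_2=\delta_M^q$ simultaneously, the same reasoning reduces both convolutions to the composite of two projections $M\otimes\mathcal{W}_{D^{p+q}}\to M\otimes\mathcal{W}_{D^p}\to M$, which by the functoriality of $\cdot\otimes\mathcal{W}$ collapses to the single projection $\pi_M^{M\otimes\mathcal{W}_{D^{p+q}}}=\delta_M^{p+q}$. The whole argument is essentially bookkeeping rather than hard analysis; the only mildly delicate point will be keeping track consistently of which factor of $\mathcal{W}_{D^{p+q}}$ is being suppressed as the associativity and commutativity isomorphisms of $\otimes_{\infty}$ are composed, so that the two naturality squares are seen to produce literally the same morphism and not merely isomorphic ones.
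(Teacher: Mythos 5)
Your proof is correct. The paper states this proposition without proof (it is declared ``obvious''), and your argument --- unfolding the two convolutions and using the naturality of the canonical projection, i.e.\ that the Dirac distribution $\delta_M^p$ is the prolongation along $M$ of the augmentation $\mathcal{W}_{D^p}\rightarrow\mathbb{R}$, so that both composites are induced by the same Weil algebra morphism --- is exactly the bookkeeping the author leaves implicit.
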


\begin{definition}
An element $\xi\in\left[  M\otimes\mathcal{W}_{D^{p}}\rightarrow M\right]
\otimes\mathcal{W}_{D^{n}}$ with
\[
\pi_{\left[  M\otimes\mathcal{W}_{D^{p}}\rightarrow M\right]  }^{\left[
M\otimes\mathcal{W}_{D^{p}}\rightarrow M\right]  \otimes\mathcal{W}_{D^{n}}%
}\left(  \xi\right)  =\delta_{M}^{p}%
\]
is called an $(n,p)$-icon on $M$.
\end{definition}

\begin{remark}
By dropping the second and third conditions in the second definition of a
tangent-vector-valued $p$\textit{-form on} $M$ in the preceding section, we
rediscover the notion of a $(1,p)$-icon on $M$.
\end{remark}

\begin{definition}
We define a binary mapping
\begin{align*}
\circledast &  :\left(  \left[  M\otimes\mathcal{W}_{D^{p}}\rightarrow
M\right]  \otimes\mathcal{W}_{D^{m}}\right)  \times\left(  \left[
M\otimes\mathcal{W}_{D^{q}}\rightarrow M\right]  \otimes\mathcal{W}_{D^{n}%
}\right)  \rightarrow\\
&  \left[  M\otimes\mathcal{W}_{D^{p+q}}\rightarrow M\right]  \otimes
\mathcal{W}_{D^{m+n}}%
\end{align*}
to be
\begin{align*}
&  \left(  \left[  M\otimes\mathcal{W}_{D^{p}}\rightarrow M\right]
\otimes\mathcal{W}_{D^{m}}\right)  \times\left(  \left[  M\otimes
\mathcal{W}_{D^{q}}\rightarrow M\right]  \otimes\mathcal{W}_{D^{n}}\right) \\
&  \underline{\left(  \mathrm{id}_{\left[  M\otimes\mathcal{W}_{D^{p}%
}\rightarrow M\right]  }\otimes\mathcal{W}_{\left(  d_{1},...,d_{m}%
,d_{m+1},...,d_{m+n}\right)  \in D^{m+n}\mapsto\left(  d_{1},...,d_{m}\right)
\in D^{m}}\right)  \times}\\
&  \underrightarrow{\left(  \mathrm{id}_{\left[  M\otimes\mathcal{W}_{D^{q}%
}\rightarrow M\right]  }\otimes\mathcal{W}_{\left(  d_{1},...,d_{m}%
,d_{m+1},...,d_{m+n}\right)  \in D^{m+n}\mapsto\left(  d_{m+1},...,d_{m+n}%
\right)  \in D^{n}}\right)  }\\
&  \left(  \left[  M\otimes\mathcal{W}_{D^{p}}\rightarrow M\right]
\otimes\mathcal{W}_{D^{m+n}}\right)  \times\left(  \left[  M\otimes
\mathcal{W}_{D^{q}}\rightarrow M\right]  \otimes\mathcal{W}_{D^{m+n}}\right)
\\
&  =\left(  \left[  M\otimes\mathcal{W}_{D^{p}}\rightarrow M\right]
\times\left[  M\otimes\mathcal{W}_{D^{q}}\rightarrow M\right]  \right)
\otimes\mathcal{W}_{D^{m+n}}\\
&  \underrightarrow{\ast\otimes\mathrm{id}_{\mathcal{W}_{D^{m+n}}}}\left[
M\otimes\mathcal{W}_{D^{p+q}}\rightarrow M\right]  \otimes\mathcal{W}%
_{D^{m+n}}%
\end{align*}

\end{definition}

\begin{definition}
We define a binary mapping
\begin{align*}
\widetilde{\circledast}  &  :\left(  \left[  M\otimes\mathcal{W}_{D^{p}%
}\rightarrow M\right]  \otimes\mathcal{W}_{D^{m}}\right)  \times\left(
\left[  M\otimes\mathcal{W}_{D^{q}}\rightarrow M\right]  \otimes
\mathcal{W}_{D^{n}}\right)  \rightarrow\\
&  \left[  M\otimes\mathcal{W}_{D^{p+q}}\rightarrow M\right]  \otimes
\mathcal{W}_{D^{m+n}}%
\end{align*}
to be
\begin{align*}
&  \left(  \left[  M\otimes\mathcal{W}_{D^{p}}\rightarrow M\right]
\otimes\mathcal{W}_{D^{m}}\right)  \times\left(  \left[  M\otimes
\mathcal{W}_{D^{q}}\rightarrow M\right]  \otimes\mathcal{W}_{D^{n}}\right) \\
&  \underline{\left(  \mathrm{id}_{\left[  M\otimes\mathcal{W}_{D^{p}%
}\rightarrow M\right]  }\otimes\mathcal{W}_{\left(  d_{1},...,d_{m}%
,d_{m+1},...,d_{m+n}\right)  \in D^{m+n}\mapsto\left(  d_{1},...,d_{m}\right)
\in D^{m}}\right)  \times}\\
&  \underrightarrow{\left(  \mathrm{id}_{\left[  M\otimes\mathcal{W}_{D^{q}%
}\rightarrow M\right]  }\otimes\mathcal{W}_{\left(  d_{1},...,d_{m}%
,d_{m+1},...,d_{m+n}\right)  \in D^{m+n}\mapsto\left(  d_{m+1},...,d_{m+n}%
\right)  \in D^{n}}\right)  }\\
&  \left(  \left[  M\otimes\mathcal{W}_{D^{p}}\rightarrow M\right]
\otimes\mathcal{W}_{D^{m+n}}\right)  \times\left(  \left[  M\otimes
\mathcal{W}_{D^{q}}\rightarrow M\right]  \otimes\mathcal{W}_{D^{m+n}}\right)
\\
&  =\left(  \left[  M\otimes\mathcal{W}_{D^{p}}\rightarrow M\right]
\times\left[  M\otimes\mathcal{W}_{D^{q}}\rightarrow M\right]  \right)
\otimes\mathcal{W}_{D^{m+n}}\\
&  \underrightarrow{\widetilde{\ast}\otimes\mathrm{id}_{\mathcal{W}_{D^{m+n}}%
}}\left[  M\otimes\mathcal{W}_{D^{p+q}}\rightarrow M\right]  \otimes
\mathcal{W}_{D^{m+n}}%
\end{align*}

\end{definition}

\begin{proposition}
\label{t4.1.4}Given $\xi_{1}\in\left[  M\otimes\mathcal{W}_{D^{p}}\rightarrow
M\right]  \otimes\mathcal{W}_{D^{l}}$, $\xi_{2}\in\left[  M\otimes
\mathcal{W}_{D^{q}}\rightarrow M\right]  \otimes\mathcal{W}_{D^{m}}$ and
$\xi_{3}\in\left[  M\otimes\mathcal{W}_{D^{r}}\rightarrow M\right]
\otimes\mathcal{W}_{D^{n}}$, we have
\begin{align*}
(\xi_{1}\circledast\xi_{2})\circledast\xi_{3}  &  =\xi_{1}\circledast(\xi
_{2}\circledast\xi_{3})\\
(\xi_{1}\widetilde{\circledast}\xi_{2})\widetilde{\circledast}\xi_{3}  &
=\xi_{1}\widetilde{\circledast}(\xi_{2}\widetilde{\circledast}\xi_{3})
\end{align*}

\end{proposition}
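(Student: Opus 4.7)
The plan is to reduce the associativity of $\circledast$ (and, in parallel, of $\widetilde{\circledast}$) to the associativity of the ordinary convolutions $\ast$ and $\widetilde{\ast}$ already established in Proposition \ref{t4.1.2}, via functoriality of the Weil prolongation bifunctor together with compatibility of the canonical projections from $\mathcal{W}_{D^{l+m+n}}$ onto $\mathcal{W}_{D^{l}}$, $\mathcal{W}_{D^{m}}$, and $\mathcal{W}_{D^{n}}$. I deal with $\circledast$ explicitly; the argument for $\widetilde{\circledast}$ is verbatim the same, with $\widetilde{\ast}$ replacing $\ast$ throughout.

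First I would unwind $(\xi_{1}\circledast\xi_{2})\circledast\xi_{3}$ step by step. By definition, $\xi_{1}\circledast\xi_{2}$ is obtained by pulling $\xi_{1}$ back along the Weil algebra map induced by the projection $D^{l+m}\to D^{l}$ onto the first $l$ coordinates, pulling $\xi_{2}$ back along the projection $D^{l+m}\to D^{m}$ onto the last $m$ coordinates, regrouping the pair inside $([M\otimes\mathcal{W}_{D^{p}}\rightarrow M]\times[M\otimes\mathcal{W}_{D^{q}}\rightarrow M])\otimes\mathcal{W}_{D^{l+m}}$, and applying $\ast\otimes\mathrm{id}$. Convolving the result with $\xi_{3}$ then requires pulling it back along $D^{l+m+n}\to D^{l+m}$ and pulling $\xi_{3}$ back along $D^{l+m+n}\to D^{n}$. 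By functoriality of $\mathcal{W}$, the composite pullback of $\xi_{1}$ equals its direct pullback along the projection $D^{l+m+n}\to D^{l}$ onto the first $l$ coordinates, and similarly for $\xi_{2}$ and for $\xi_{3}$. Moreover, because $\ast\otimes\mathrm{id}_{\mathcal{W}_{D^{l+m}}}$ is a morphism of Fr\"olicher spaces, applying $\cdot\otimes\mathcal{W}_{D^{n}}$ to it commutes with the subsequent pullback, so that the outer $\ast\otimes\mathrm{id}_{\mathcal{W}_{D^{l+m+n}}}$ can be written as the application of $\ast$ (in the two-fold sense) to the already-enlarged triples.

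Performing the analogous unwinding of $\xi_{1}\circledast(\xi_{2}\circledast\xi_{3})$ yields, after the same naturality reductions, the same triple in
\[
\bigl([M\otimes\mathcal{W}_{D^{p}}\rightarrow M]\times[M\otimes\mathcal{W}_{D^{q}}\rightarrow M]\times[M\otimes\mathcal{W}_{D^{r}}\rightarrow M]\bigr)\otimes\mathcal{W}_{D^{l+m+n}}
\]
obtained by pulling $\xi_{1},\xi_{2},\xi_{3}$ back along the three coordinate projections of $D^{l+m+n}$. The two sides of the desired identity then differ only in whether one applies $(\ast\otimes\mathrm{id})\circ(\ast\otimes\mathrm{id})$ with the first bracketing or with the second bracketing of $\ast$. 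By Proposition \ref{t4.1.2}, these two composites coincide on $[M\otimes\mathcal{W}_{D^{p}}\rightarrow M]\times[M\otimes\mathcal{W}_{D^{q}}\rightarrow M]\times[M\otimes\mathcal{W}_{D^{r}}\rightarrow M]$; tensoring with $\mathrm{id}_{\mathcal{W}_{D^{l+m+n}}}$ preserves this equality, and the claim for $\circledast$ follows.

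The main obstacle is purely bookkeeping: one must keep straight which projection $D^{l+m+n}\to D^{k}$ is being used for which factor at each nested stage, and repeatedly invoke that $\cdot\otimes\mathcal{W}_{D^{n}}$ is a functor so it commutes with composition of Weil algebra morphisms and with the map $\ast\otimes\mathrm{id}$. No new geometric input beyond Proposition \ref{t4.1.2} is needed; accordingly, the details can safely be relegated to a routine verification.
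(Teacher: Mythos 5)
Your argument is correct: reducing the associativity of $\circledast$ and $\widetilde{\circledast}$ to Proposition \ref{t4.1.2} by noting that the nested coordinate projections of $D^{l+m+n}$ compose to the three direct ones and that $\ast\otimes\mathrm{id}$ commutes with pullbacks of the form $\mathrm{id}\otimes\mathcal{W}_{\pi}$ (so that both bracketings become $\cdot\otimes\mathcal{W}_{D^{l+m+n}}$ applied to the two equal composites of $\ast$) is exactly the intended argument. The paper states Proposition \ref{t4.1.4} without proof, treating it as routine, so your write-up supplies precisely the omitted verification.
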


It should be obvious that

\begin{lemma}
\label{t4.1.5}For any $(1,p)$-icon $\xi_{1}$\ on $M$ and any $(1,q)$-icon
$\xi_{2}$\ on $M$, we have
\begin{align*}
&  \left(  \mathrm{id}_{\left[  M\otimes\mathcal{W}_{D^{p}}\rightarrow
M\right]  }\otimes\mathcal{W}_{\left(  d_{1},d_{2}\right)  \in D(2)\mapsto
\left(  d_{1},d_{2}\right)  \in D^{2}}\right)  \left(  \xi_{1}\circledast
\xi_{2}\right) \\
&  =\left(  \mathrm{id}_{\left[  M\otimes\mathcal{W}_{D^{p}}\rightarrow
M\right]  }\otimes\mathcal{W}_{\left(  d_{1},d_{2}\right)  \in D(2)\mapsto
\left(  d_{1},d_{2}\right)  \in D^{2}}\right)  \left(  \xi_{1}\widetilde
{\circledast}\xi_{2}\right)
\end{align*}

\end{lemma}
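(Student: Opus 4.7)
The plan is to exploit microlinearity to reduce the claimed equality in $\left[M\otimes\mathcal{W}_{D^{p+q}}\rightarrow M\right]\otimes\mathcal{W}_{D(2)}$ to two separate equalities along the two $D$-axes of $D(2)$, and then to invoke Proposition \ref{t4.1.3} on each axis because the surviving operand becomes a Dirac distribution.

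First I would use Weil exponentiability of $M$ together with its microlinearity to identify
\begin{align*}
&\left[M\otimes\mathcal{W}_{D^{p+q}}\rightarrow M\right]\otimes\mathcal{W}_{D(2)}\\
&\quad=\left(\left[M\otimes\mathcal{W}_{D^{p+q}}\rightarrow M\right]\otimes\mathcal{W}_{D}\right)\underset{\left[M\otimes\mathcal{W}_{D^{p+q}}\rightarrow M\right]}{\times}\left(\left[M\otimes\mathcal{W}_{D^{p+q}}\rightarrow M\right]\otimes\mathcal{W}_{D}\right),
\end{align*}
the fibered product being induced by the two canonical inclusions $d\mapsto(d,0)$ and $d\mapsto(0,d)$ of $D$ into $D(2)$; this uses that $\mathcal{W}_{D(2)}$ is the fibered product $\mathcal{W}_{D}\times_{\mathbb{R}}\mathcal{W}_{D}$ in $\mathbf{W}$. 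The upshot is that to prove the lemma it suffices to check that $\xi_{1}\circledast\xi_{2}$ and $\xi_{1}\widetilde{\circledast}\xi_{2}$ have coinciding restrictions to $\mathcal{W}_{D}$ along $(d_{1},d_{2})=(d,0)$ and along $(d_{1},d_{2})=(0,d)$, since $D\hookrightarrow D(2)\hookrightarrow D^{2}$ agrees in each case with the corresponding raw axis inclusion into $D^{2}$.

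Next I would compute, say, the $d_{2}=0$ restriction of $\xi_{1}\circledast\xi_{2}$ by naturality of the definition of $\circledast$. The inflation of $\xi_{1}$ along $(d_{1},d_{2})\mapsto d_{1}$ restricts along $d\mapsto(d,0)$ to $\xi_{1}$ itself, while the inflation of $\xi_{2}$ along $(d_{1},d_{2})\mapsto d_{2}$ restricts along $d\mapsto(d,0)$ to $\xi_{2}$ pulled back along the zero map $D\to D$; because $\xi_{2}$ is a $(1,q)$-icon with foot $\delta_{M}^{q}$, this pullback collapses $\xi_{2}$ to the constant $\delta_{M}^{q}$ in $\left[M\otimes\mathcal{W}_{D^{q}}\to M\right]\otimes\mathcal{W}_{D}$. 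Hence $(\xi_{1}\circledast\xi_{2})|_{d_{2}=0}$ is the image of the pair $(\xi_{1},\delta_{M}^{q})$ under $\ast\otimes\mathrm{id}_{\mathcal{W}_{D}}$, and similarly $(\xi_{1}\widetilde{\circledast}\xi_{2})|_{d_{2}=0}$ is its image under $\widetilde{\ast}\otimes\mathrm{id}_{\mathcal{W}_{D}}$. Proposition \ref{t4.1.3} supplies $\eta\ast\delta_{M}^{q}=\eta\widetilde{\ast}\delta_{M}^{q}$ naturally in $\eta$, and this identity therefore survives tensoring with $\mathcal{W}_{D}$. A symmetric argument, using that $\xi_{1}$ has foot $\delta_{M}^{p}$, handles the $d_{1}=0$ axis.

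The principal obstacle is the bookkeeping in the middle step: one must trace the two successive Weil-algebra projections through the composite definition of $\circledast$ (and of $\widetilde{\circledast}$) carefully enough to confirm that the naturality reduction genuinely pairs $\xi_{1}$ with a Dirac distribution rather than something subtler, and one must justify the microlinearity-type decomposition of the function space $\left[M\otimes\mathcal{W}_{D^{p+q}}\to M\right]\otimes\mathcal{W}_{D(2)}$ by deriving it from microlinearity of $M$ via the Weil exponentiability laws (\ref{2.2.1})--(\ref{2.2.3}). Once both points are in place, the two-axis verification together with Proposition \ref{t4.1.3} closes the lemma immediately.
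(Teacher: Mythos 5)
Your argument is correct and is precisely the standard reasoning the paper leaves implicit (the lemma is stated with ``It should be obvious that'' and no written proof): microlinearity identifies $\left[M\otimes\mathcal{W}_{D^{p+q}}\rightarrow M\right]\otimes\mathcal{W}_{D(2)}$ with the fibered product of two copies of $\left[M\otimes\mathcal{W}_{D^{p+q}}\rightarrow M\right]\otimes\mathcal{W}_{D}$ over $\left[M\otimes\mathcal{W}_{D^{p+q}}\rightarrow M\right]$, and on each axis one of the two operands degenerates to its foot point, a Dirac distribution, so that Proposition \ref{t4.1.3} applies. Your two flagged bookkeeping points are genuine but routine, and the axis-by-axis reduction closes the lemma as you describe.
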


Therefore the following definition is meaningful.

\begin{definition}
For any $(1,p)$-icon $\xi_{1}$\ on $M$ and any $(1,q)$-icon $\xi_{2}$\ on $M$,
their Lie bracket $\left[  \xi_{1},\xi_{2}\right]  _{L}\in\left[
M\otimes\mathcal{W}_{D^{p+q}}\rightarrow M\right]  \otimes\mathcal{W}_{D}$ is
defined to be
\[
\left[  \xi_{1},\xi_{2}\right]  _{L}=\xi_{1}\widetilde{\circledast}\xi
_{2}\overset{\cdot}{-}\xi_{1}\circledast\xi_{2}%
\]

\end{definition}

It is easy to see that

\begin{lemma}
\label{t4.1.6}In the above definition, $\left[  \xi_{1},\xi_{2}\right]  _{L}$
is always a $(1,p+q)$-icon on $M$.
\end{lemma}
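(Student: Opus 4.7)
The plan is to verify the single defining condition of a $(1,p+q)$-icon, namely that
\[
\pi_{[M\otimes\mathcal{W}_{D^{p+q}}\rightarrow M]}^{[M\otimes\mathcal{W}_{D^{p+q}}\rightarrow M]\otimes\mathcal{W}_{D}}\bigl([\xi_{1},\xi_{2}]_{L}\bigr)=\delta_{M}^{p+q}.
\]
First I would check that the subtraction $\xi_{1}\widetilde{\circledast}\xi_{2}\overset{\cdot}{-}\xi_{1}\circledast\xi_{2}$ is well-defined and lands in $[M\otimes\mathcal{W}_{D^{p+q}}\rightarrow M]\otimes\mathcal{W}_{D}$: both terms lie in $[M\otimes\mathcal{W}_{D^{p+q}}\rightarrow M]\otimes\mathcal{W}_{D^{2}}$, and Lemma \ref{t4.1.5} asserts precisely that they agree after restriction along $D(2)\hookrightarrow D^{2}$, which is the hypothesis required by Proposition \ref{t2.4.3}.

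Next I would evaluate the base point (i.e. the projection $\pi$ corresponding to $(0,0)\in D^{2}$) of each of $\xi_{1}\circledast\xi_{2}$ and $\xi_{1}\widetilde{\circledast}\xi_{2}$. Unwinding the definition of $\circledast$, the map $\mathcal{W}_{(d_{1},d_{2})\in D^{2}\mapsto(0,0)}$ sends $\xi_{i}$ to its own base point, which by hypothesis is the Dirac distribution $\delta_{M}^{p}$ respectively $\delta_{M}^{q}$. After applying $\ast\otimes\mathrm{id}$, the result is $\delta_{M}^{p}\ast\delta_{M}^{q}$. By Proposition \ref{t4.1.3}, this equals the Dirac distribution $\delta_{M}^{p+q}$. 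The identical computation with $\widetilde{\ast}$ in place of $\ast$ yields the same value, confirming once again that the two terms share a base point.

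Finally I would apply Proposition \ref{t2.4.3} (whose Taylor-expansion reasoning rests on Theorems \ref{t2.4.1} and \ref{t2.4.1'}) to conclude that the projection of $\gamma_{1}\overset{\cdot}{-}\gamma_{2}\in E\otimes\mathcal{W}_{D}$ to $E$ coincides with the common base point of $\gamma_{1},\gamma_{2}\in E\otimes\mathcal{W}_{D^{2}}$; indeed, one sees from the Taylor form in the proof of Proposition \ref{t2.4.3} that $\gamma_{1}\overset{\cdot}{-}\gamma_{2}$ has expansion $\mathbf{a}+x(\mathbf{b}_{12}-\mathbf{b}_{12}')$ whose value at $x=0$ is the shared base $\mathbf{a}$. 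Instantiating with $\gamma_{1}=\xi_{1}\widetilde{\circledast}\xi_{2}$, $\gamma_{2}=\xi_{1}\circledast\xi_{2}$ and $E=[M\otimes\mathcal{W}_{D^{p+q}}\rightarrow M]$, the projection of $[\xi_{1},\xi_{2}]_{L}$ is this common base point $\delta_{M}^{p+q}$, giving the icon condition.

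The main obstacle is a foundational one rather than a computational one: Proposition \ref{t2.4.3} is stated for Euclidean microlinear vector spaces, whereas $E=[M\otimes\mathcal{W}_{D^{p+q}}\rightarrow M]$ is a function space. One must therefore either (i) justify that the relevant subtraction and its base-point formula transport to this function space via the exponential law and the Weil exponentiability of $M$ (the mechanism already used in the proof of Proposition \ref{t3.1} and implicit in the definition of $\overset{\cdot}{-}$ appearing in the bracket), or (ii) perform the verification pointwise in the $M\otimes\mathcal{W}_{D^{p+q}}$ variable and invoke Proposition \ref{t2.5.5} in the image fiber. Either way, once this transfer is granted, the argument reduces to the three-step computation outlined above.
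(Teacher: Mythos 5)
Your verification is correct, and in fact the paper offers no argument at all for this lemma (it is prefaced only by ``It is easy to see that''), so your write-up supplies exactly the details the author leaves to the reader: well-definedness of the strong difference via Lemma \ref{t4.1.5}, the computation of the common base point as $\delta_{M}^{p}\ast\delta_{M}^{q}=\delta_{M}^{p+q}$ via Proposition \ref{t4.1.3} and functoriality of $\circledast$ in the Weil-algebra variable, and the identification of the foot point of the strong difference with that common base point. One simplification is worth noting: the ``foundational obstacle'' you flag at the end is not really there. The foot-point property of $\overset{\cdot}{-}$ does not depend on Proposition \ref{t2.4.3} or on any Euclidean (vector-space) structure; the strong difference of two microsquares agreeing on $D(2)$ is defined in any microlinear space by the unique filler $\gamma$ of the quasi-colimit diagram (\ref{2.2.4}) followed by restriction along $d\in D\mapsto(0,0,d)\in D^{2}\oplus D$, so its projection to the base is $\gamma$ restricted to the point, which is the common value $\gamma_{1}(0,0)=\gamma_{2}(0,0)$ by construction. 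Since $\left[M\otimes\mathcal{W}_{D^{p+q}}\rightarrow M\right]$ is microlinear ($M$ being microlinear and Weil exponentiable), this applies directly and neither of your proposed workarounds (i) or (ii) is needed; Proposition \ref{t2.4.3} is only required later, where one must compute the $\mathbf{D}$-component rather than the foot point.
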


\begin{proposition}
\label{t4.1.7}If $\xi_{1}$ is a tangent-vector-valued $p$\textit{-semiform on}
$M$ and $\xi_{2}$ is a tangent-vector-valued $q$\textit{-semiform on} $M $,
then we have
\begin{align*}
&  \left(  \left(  \left(  \alpha\underset{i}{\cdot}\right)  _{M\otimes
\mathcal{W}_{D^{p+q}}}\right)  ^{\ast}\otimes\mathrm{id}_{\mathcal{W}_{D^{2}}%
}\right)  (\xi_{1}\circledast\xi_{2})\\
&  =\left(  \mathrm{id}_{\left[  M\otimes\mathcal{W}_{D^{p+q}}\rightarrow
M\right]  }\otimes\mathcal{W}_{\left(  d_{1},d_{2}\right)  \in D^{2}%
\mapsto(\alpha d_{1},d_{2})\in D^{2}}\right)  (\xi_{1}\circledast\xi_{2})\\
&  \left(  \left(  \left(  \alpha\underset{i}{\cdot}\right)  _{M\otimes
\mathcal{W}_{D^{p+q}}}\right)  ^{\ast}\otimes\mathrm{id}_{\mathcal{W}_{D^{2}}%
}\right)  (\xi_{1}\widetilde{\circledast}\xi_{2})\\
&  =\left(  \mathrm{id}_{\left[  M\otimes\mathcal{W}_{D^{p+q}}\rightarrow
M\right]  }\otimes\mathcal{W}_{\left(  d_{1},d_{2}\right)  \in D^{2}%
\mapsto(\alpha d_{1},d_{2})\in D^{2}}\right)  (\xi_{1}\widetilde{\circledast
}\xi_{2})
\end{align*}
for any natural number $i$ with $1\leq i\leq p$, while we have
\begin{align*}
&  \left(  \left(  \left(  \alpha\underset{i}{\cdot}\right)  _{M\otimes
\mathcal{W}_{D^{p+q}}}\right)  ^{\ast}\otimes\mathrm{id}_{\mathcal{W}_{D^{2}}%
}\right)  (\xi_{1}\circledast\xi_{2})\\
&  =\left(  \mathrm{id}_{\left[  M\otimes\mathcal{W}_{D^{p+q}}\rightarrow
M\right]  }\otimes\mathcal{W}_{\left(  d_{1},d_{2}\right)  \in D^{2}%
\mapsto(d_{1},\alpha d_{2})\in D^{2}}\right)  (\xi_{1}\circledast\xi_{2})\\
&  \left(  \left(  \left(  \alpha\underset{i}{\cdot}\right)  _{M\otimes
\mathcal{W}_{D^{p+q}}}\right)  ^{\ast}\otimes\mathrm{id}_{\mathcal{W}_{D^{2}}%
}\right)  (\xi_{1}\widetilde{\circledast}\xi_{2})\\
&  =\left(  \mathrm{id}_{\left[  M\otimes\mathcal{W}_{D^{p+q}}\rightarrow
M\right]  }\otimes\mathcal{W}_{\left(  d_{1},d_{2}\right)  \in D^{2}%
\mapsto(d_{1},\alpha d_{2})\in D^{2}}\right)  (\xi_{1}\widetilde{\circledast
}\xi_{2})
\end{align*}
for any natural number $i$ with $p+1\leq i\leq p+q$.
\end{proposition}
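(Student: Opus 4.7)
The plan is to unpack the definition of $\circledast$ and $\widetilde{\circledast}$ and then to apply the second defining condition for a tangent-vector-valued semiform (the scalar multiplication condition). First I would observe that the definition of $\circledast$ assembles $\xi_{1}\circledast\xi_{2}$ from $\xi_{1}$ and $\xi_{2}$ by way of natural isomorphisms together with the convolution $\ast$. Under the canonical identification $\mathcal{W}_{D^{p+q}}=\mathcal{W}_{D^{p}}\otimes_{\infty}\mathcal{W}_{D^{q}}$, the first $p$ coordinates of $D^{p+q}$ are fed into the map underlying $\xi_{1}$ and the last $q$ into the map underlying $\xi_{2}$. Crucially, the two $D$-factors of $D^{m+n}=D^{2}$ in the resulting space $[M\otimes\mathcal{W}_{D^{p+q}}\rightarrow M]\otimes\mathcal{W}_{D^{2}}$ come, in order, from $\xi_{1}$'s $\mathcal{W}_{D}$ factor and $\xi_{2}$'s $\mathcal{W}_{D}$ factor.

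Second, I would verify that the pullback along the dilation $(\alpha\underset{i}{\cdot})_{M\otimes\mathcal{W}_{D^{p+q}}}$ with $1\leq i\leq p$ commutes with all the natural isomorphisms appearing in the definition of $\circledast$ (these are all induced by Weil-algebra homomorphisms, so commutativity is just functoriality of $\cdot\otimes\mathcal{W}_{(-)}$). Since this dilation touches only one of the first $p$ coordinates of $D^{p+q}$, under the decomposition $D^{p+q}=D^{p}\times D^{q}$ it reduces to the dilation $(\alpha\underset{i}{\cdot})_{M\otimes\mathcal{W}_{D^{p}}}$ acting on $\xi_{1}$'s input while leaving $\xi_{2}$ untouched. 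Invoking the homogeneity of $\xi_{1}$ as a tangent-vector-valued $p$-semiform, this pullback becomes scalar multiplication by $\alpha$ of the $\mathcal{W}_{D}$-slot of $\xi_{1}$, i.e.\ the application of $\mathrm{id}\otimes\mathcal{W}_{d\mapsto\alpha d}$ to $\xi_{1}$. Since this $\mathcal{W}_{D}$-slot corresponds, under $\circledast$, to the first $D$-factor of $D^{2}$, the result is precisely $(\mathrm{id}\otimes\mathcal{W}_{(d_{1},d_{2})\mapsto(\alpha d_{1},d_{2})})(\xi_{1}\circledast\xi_{2})$. The case $p+1\leq i\leq p+q$ is entirely symmetric, using the homogeneity of $\xi_{2}$ and producing a dilation of the second $D$-coordinate of $D^{2}$.

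Third, exactly the same reasoning applies to $\widetilde{\circledast}$: this construction differs from $\circledast$ only in the final step, where $\ast$ is replaced by $\widetilde{\ast}$. That replacement swaps the order of application of $\eta_{1}$ and $\eta_{2}$, but does not change which coordinates of $D^{p+q}$ are channelled to which map, nor does it change which $D$-factor of $D^{2}$ arises from $\xi_{1}$ versus $\xi_{2}$. Hence the same natural-isomorphism bookkeeping and the same appeal to the homogeneity of $\xi_{1}$ or $\xi_{2}$ yields the four claimed identities.

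The only real obstacle I anticipate is the clerical verification that the compound natural isomorphism used in the definition of $\circledast$ commutes with the dilation $(\alpha\underset{i}{\cdot})_{\cdot}$ in the correct way. This is a routine commuting-square argument relying on the functoriality of $\cdot\otimes\mathcal{W}_{(-)}$ in its Weil-algebra argument together with the explicit form of the isomorphism $\mathcal{W}_{D^{p+q}}=\mathcal{W}_{D^{p}}\otimes_{\infty}\mathcal{W}_{D^{q}}$. There is no conceptual difficulty; the substantive content is simply the homogeneity condition satisfied by the semiforms.
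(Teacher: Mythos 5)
Your proposal is correct: the paper states Proposition \ref{t4.1.7} without any proof, treating it as an immediate consequence of the definitions, and your unwinding of $\circledast$ and $\widetilde{\circledast}$ down to the homogeneity condition on $\xi_{1}$ (respectively $\xi_{2}$) is exactly the routine verification the paper leaves implicit. In particular you correctly identify the two key bookkeeping facts — that the first $p$ coordinates of $D^{p+q}$ are channelled to $\xi_{1}$ and the last $q$ to $\xi_{2}$ under both convolutions, and that the two $D$-factors of $D^{2}$ come from the $\mathcal{W}_{D}$-slots of $\xi_{1}$ and $\xi_{2}$ in that order — so nothing further is needed.
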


\begin{corollary}
\label{t4.1.8}If $\xi_{1}$ is a tangent-vector-valued $p$\textit{-semiform on}
$M$ and $\xi_{2}$ is a tangent-vector-valued $q$\textit{-semiform on} $M $,
then $\left[  \xi_{1},\xi_{2}\right]  _{L}$ is a tangent-vector-valued $(p+q)
$\textit{-semiform on} $M$.
\end{corollary}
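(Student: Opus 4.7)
The definition of a tangent-vector-valued $(p+q)$-semiform demands two conditions: that the projection to $[M\otimes\mathcal{W}_{D^{p+q}}\rightarrow M]$ equals $\delta_M^{p+q}$, and the homogeneity condition
\[
\left(\left(\left(\alpha\underset{i}{\cdot}\right)_{M\otimes\mathcal{W}_{D^{p+q}}}\right)^{\ast}\otimes\mathrm{id}_{\mathcal{W}_D}\right)\left([\xi_1,\xi_2]_L\right)=\alpha[\xi_1,\xi_2]_L
\]
for each $1\leq i\leq p+q$. The first condition is immediate from Lemma \ref{t4.1.6}, so the task reduces to verifying the homogeneity in each of the $p+q$ positions.

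I plan to apply the pullback $\left((\alpha\underset{i}{\cdot})^{\ast}\otimes\mathrm{id}\right)$ to both sides of $[\xi_1,\xi_2]_L=\xi_1\widetilde{\circledast}\xi_2\overset{\cdot}{-}\xi_1\circledast\xi_2$ and push it through $\overset{\cdot}{-}$. Since $(\alpha\underset{i}{\cdot})^{\ast}$ is a linear endomorphism of the Euclidean vector space $[M\otimes\mathcal{W}_{D^{p+q}}\rightarrow M]$, the naturality of $\overset{\cdot}{-}$ (read off from Taylor's expansion, Theorem \ref{t2.4.1} and Proposition \ref{t2.4.3}) shows that the pullback commutes with $\overset{\cdot}{-}$; and the two pullbacks still agree on $D(2)\subset D^2$ by Lemma \ref{t4.1.5}, so the application of $\overset{\cdot}{-}$ after pullback is legitimate. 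Proposition \ref{t4.1.7} then converts each pullback on the right to a scaling on the $\mathcal{W}_{D^2}$ side: $\mathrm{id}\otimes\mathcal{W}_{(d_1,d_2)\mapsto(\alpha d_1,d_2)}$ when $1\leq i\leq p$, or $\mathrm{id}\otimes\mathcal{W}_{(d_1,d_2)\mapsto(d_1,\alpha d_2)}$ when $p+1\leq i\leq p+q$.

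The core step is then a routine Taylor-expansion check: for $\gamma_1,\gamma_2\in\mathbb{E}\otimes\mathcal{W}_{D^2}$ that agree on $D(2)$, scaling the first $D$-coordinate by $\alpha$ replaces the mixed Taylor coefficient $\mathbf{b}_{12}^{(j)}$ by $\alpha\mathbf{b}_{12}^{(j)}$ while preserving the other coefficients, so Proposition \ref{t2.4.3} delivers the resulting $\overset{\cdot}{-}$ as the element of $\mathbb{E}\otimes\mathcal{W}_D$ whose $\mathbf{D}$ equals $\alpha(\mathbf{b}_{12}^{(1)}-\mathbf{b}_{12}^{(2)})$. This is exactly $\alpha$ times $\gamma_1\overset{\cdot}{-}\gamma_2$, where ``$\alpha\cdot$'' on $\mathbb{E}\otimes\mathcal{W}_D$ means $\mathrm{id}\otimes\mathcal{W}_{d\mapsto\alpha d}$. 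The argument for scaling the second coordinate is identical. Assembling these pieces yields $\alpha[\xi_1,\xi_2]_L$ on the right-hand side, which is the required homogeneity.

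The main obstacle is purely notational: keeping straight the several copies of $D$ in play (the $D^{p+q}$ on the domain, the $D^2$ inside which $\widetilde{\circledast}$ and $\circledast$ live, and the final $D$ picked out by $\overset{\cdot}{-}$), and verifying that linear pullback on the Euclidean-vector-space side honestly commutes with $\overset{\cdot}{-}$. No genuinely new idea is required; the corollary essentially asserts that Proposition \ref{t4.1.7} survives the passage to the Lie bracket via $\overset{\cdot}{-}$.
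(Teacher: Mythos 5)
Your overall route coincides with the paper's: the projection condition comes from Lemma \ref{t4.1.6}, Proposition \ref{t4.1.7} converts the pullback along $\left(  \alpha\underset{i}{\cdot}\right)  _{M\otimes\mathcal{W}_{D^{p+q}}}$ into a scaling of one of the two coordinates of the $\mathcal{W}_{D^{2}}$ factor, and what remains is to commute that scaling with $\overset{\cdot}{-}$. The gap lies in how you justify this last step. You treat $\left[  M\otimes\mathcal{W}_{D^{p+q}}\rightarrow M\right]  $ as a Euclidean vector space and run a Taylor-expansion argument (Theorem \ref{t2.4.1}, Proposition \ref{t2.4.3}) on its microsquares, reading off coefficients $\mathbf{a},\mathbf{b}_{1},\mathbf{b}_{2},\mathbf{b}_{12}$. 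But $\left[  M\otimes\mathcal{W}_{D^{p+q}}\rightarrow M\right]  $ is not a vector space at all --- $M$ is merely microlinear and Weil exponentiable, and there is no addition of $M$-valued mappings --- so Theorem \ref{t2.4.1} and Proposition \ref{t2.4.3}, both stated only for Euclidean vector spaces $\mathbb{E}$, are unavailable here; the elements $\xi_{1}\circledast\xi_{2}$ and $\xi_{1}\widetilde{\circledast}\xi_{2}$ have no Taylor coefficients to manipulate. (The same objection applies to your phrase ``linear endomorphism of the Euclidean vector space $\left[  M\otimes\mathcal{W}_{D^{p+q}}\rightarrow M\right]  $.'')

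What is needed in place of the Taylor check is the microlinearity-based identity $\left(  \alpha\underset{i}{\cdot}\gamma_{1}\right)  \overset{\cdot}{-}\left(  \alpha\underset{i}{\cdot}\gamma_{2}\right)  =\alpha\left(  \gamma_{1}\overset{\cdot}{-}\gamma_{2}\right)  $ for $i=1,2$ and for microsquares $\gamma_{1},\gamma_{2}$ on an arbitrary microlinear space agreeing on $D(2)$. This is precisely Proposition 5 in \S 3.4 of Lavendhomme, which the paper's proof cites, and it is established from the quasi-colimit diagram (\ref{2.2.4}) and microlinearity rather than from any Taylor expansion; likewise, the commutation of $\overset{\cdot}{-}$ with the pullback of the underlying space is the functoriality of the strong difference under smooth mappings, not a consequence of linearity. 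Once the Taylor-expansion justification is replaced by these microlinearity facts, your argument coincides with the paper's.
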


\begin{proof}
It suffices to see that
\begin{align*}
&  \left(  \left(  \left(  \alpha\underset{i}{\cdot}\right)  _{M\otimes
\mathcal{W}_{D^{p+q}}}\right)  ^{\ast}\otimes\mathrm{id}_{\mathcal{W}_{D}%
}\right)  \left(  \left[  \xi_{1},\xi_{2}\right]  _{L}\right) \\
&  =\left(  \mathrm{id}_{\left[  M\otimes\mathcal{W}_{D^{p+q}}\rightarrow
M\right]  }\otimes\mathcal{W}_{d\in D\mapsto\alpha d\in D}\right)  \left(
\left[  \xi_{1},\xi_{2}\right]  _{L}\right)
\end{align*}
for any $\alpha\in\mathbb{R}$ and any natural number $i$ with $1\leq i\leq
p+q$, which follows easily from the \ above Proposition and Proposition 5 in
\S 3.4 of Lavendhomme \cite{lav}.
\end{proof}

\begin{proposition}
\label{t4.1.12}If $\xi_{1},\xi_{1}^{\prime}$ are tangent-vector-valued
$p$\textit{-semiforms on} $M$ and $\xi_{2},\xi_{2}^{\prime}$ are
tangent-vector-valued $q$\textit{-semiforms on} $M$ with $\alpha\in\mathbb{R}
$, then we have the following:

\begin{enumerate}
\item
\[
\left[  \alpha\xi_{1},\xi_{2}\right]  _{L}=\alpha\left[  \xi_{1},\xi
_{2}\right]  _{L}%
\]

\item
\[
\left[  \xi_{1}+\xi_{1}^{\prime},\xi_{2}\right]  _{L}=\left[  \xi_{1},\xi
_{2}\right]  _{L}+\left[  \xi_{1}^{\prime},\xi_{2}\right]  _{L}%
\]

\item
\[
\left[  \xi_{1},\alpha\xi_{2}\right]  _{L}=\alpha\left[  \xi_{1},\xi
_{2}\right]  _{L}%
\]

\item
\[
\left[  \xi_{1},\xi_{2}+\xi_{2}^{\prime}\right]  _{L}=\left[  \xi_{1},\xi
_{2}\right]  _{L}+\left[  \xi_{1},\xi_{2}^{\prime}\right]  _{L}%
\]

\end{enumerate}
\end{proposition}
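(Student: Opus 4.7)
The plan is to derive all four bilinearity identities uniformly from the naturality of the Weil prolongation and of the strong difference. Write $j_{\alpha},j'_{\alpha}:D^{2}\to D^{2}$ for the coordinate rescalings $(d_{1},d_{2})\mapsto(\alpha d_{1},d_{2})$ and $(d_{1},d_{2})\mapsto(d_{1},\alpha d_{2})$, and $s_{\alpha},s'_{\alpha}:D^{2}\oplus D\to D^{2}\oplus D$ for the natural lifts $(d_{1},d_{2},e)\mapsto(\alpha d_{1},d_{2},\alpha e)$ and $(d_{1},d_{2},e)\mapsto(d_{1},\alpha d_{2},\alpha e)$; these are well-defined because, for instance, $(\alpha d_{1})(\alpha e)=\alpha^{2}d_{1}e=0$ and $d_{2}(\alpha e)=\alpha d_{2}e=0$.

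For item 1, I first establish the scalar-compatibility identities
\begin{align*}
(\alpha\xi_{1})\circledast\xi_{2} & =(\mathrm{id}\otimes\mathcal{W}_{j_{\alpha}})(\xi_{1}\circledast\xi_{2}),\\
(\alpha\xi_{1})\widetilde{\circledast}\xi_{2} & =(\mathrm{id}\otimes\mathcal{W}_{j_{\alpha}})(\xi_{1}\widetilde{\circledast}\xi_{2}),
\end{align*}
by chasing the definition of $\circledast$ and using contravariance of $\mathcal{W}$: the reindexing $\mathcal{W}_{(d_{1},d_{2})\mapsto d_{1}}$ absorbs $\mathcal{W}_{d\mapsto\alpha d}$ into precomposition with $j_{\alpha}$, while $\mathcal{W}_{(d_{1},d_{2})\mapsto d_{2}}$ is invariant under $\mathcal{W}_{j_{\alpha}}$ since $j_{\alpha}$ fixes the second coordinate, and the smooth operations $\ast$, $\widetilde{\ast}$ commute with Weil prolongation. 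Then I appeal to the quasi-colimit diagram (\ref{2.2.4}): the direct identities $s_{\alpha}\circ\varphi=\varphi\circ j_{\alpha}$ and $s_{\alpha}\circ\psi=\psi\circ j_{\alpha}$ imply that the unique realizer $\gamma\in[M\otimes\mathcal{W}_{D^{p+q}}\to M]\otimes\mathcal{W}_{D^{2}\oplus D}$ of the agreeing pair $(\xi_{1}\circledast\xi_{2},\xi_{1}\widetilde{\circledast}\xi_{2})$ is carried by $\mathcal{W}_{s_{\alpha}}$ to the unique realizer of $((\alpha\xi_{1})\circledast\xi_{2},(\alpha\xi_{1})\widetilde{\circledast}\xi_{2})$. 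Since $s_{\alpha}$ multiplies the $D$-slot by $\alpha$, extracting the strong difference yields $[\alpha\xi_{1},\xi_{2}]_{L}=\alpha[\xi_{1},\xi_{2}]_{L}$. Item 3 is symmetric, using $s'_{\alpha}$ in place of $s_{\alpha}$.

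For items 2 and 4, the same scheme applies with $d\mapsto\alpha d$ replaced by microlinear addition of tangent vectors at $\delta_{M}^{p}$: given $\xi_{1},\xi'_{1}$ with common foot, microlinearity produces a unique $\zeta\in[M\otimes\mathcal{W}_{D^{p}}\to M]\otimes\mathcal{W}_{D(2)}$ projecting to $\xi_{1},\xi'_{1}$ via the two coordinate inclusions $D\hookrightarrow D(2)$, with $\xi_{1}+\xi'_{1}$ the pullback along the diagonal $d\mapsto(d,d)$. The corresponding lift into a $D(2)$-enriched version of the $D^{2}\oplus D$ picture is again elementary, yielding additivity of $\circledast,\widetilde{\circledast}$, and hence of $[\cdot,\cdot]_{L}$, in each slot.

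The main obstacle is the naturality of $\overset{\cdot}{-}$ against $\mathcal{W}_{s_{\alpha}}$ (and its additive analogue), which must be extracted from the universal property of (\ref{2.2.4}); once this bookkeeping is done, the remaining steps are routine diagram-chasing under the contravariance of $\mathcal{W}$.
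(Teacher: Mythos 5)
Your argument is correct, but it takes a genuinely different route from the paper's. The paper disposes of items 1 and 3 in one line by citing Proposition 5 of \S 3.4 of Lavendhomme (the compatibility of the strong difference with scalar multiplication), and then obtains items 2 and 4 from items 1 and 3 ``respectively'' --- i.e.\ by invoking the standard microlinearity principle that a homogeneous map between microlinear Euclidean modules is automatically additive, applied to $\xi_{1}\mapsto\left[\xi_{1},\xi_{2}\right]_{L}$ and $\xi_{2}\mapsto\left[\xi_{1},\xi_{2}\right]_{L}$. You instead reprove the needed scaling lemma from scratch: your identities $(\alpha\xi_{1})\circledast\xi_{2}=(\mathrm{id}\otimes\mathcal{W}_{j_{\alpha}})(\xi_{1}\circledast\xi_{2})$ and $(\alpha\xi_{1})\widetilde{\circledast}\xi_{2}=(\mathrm{id}\otimes\mathcal{W}_{j_{\alpha}})(\xi_{1}\widetilde{\circledast}\xi_{2})$ are correct consequences of contravariance, the intertwinings $s_{\alpha}\circ\varphi=\varphi\circ j_{\alpha}$ and $s_{\alpha}\circ\psi=\psi\circ j_{\alpha}$ check out (as does the well-definedness of $s_{\alpha}$, $s_{\alpha}'$ on $D^{2}\oplus D$), and since $s_{\alpha}$ restricts on the $D$-slot $d\mapsto(0,0,d)$ to $d\mapsto\alpha d$, transporting the unique realizer by $\mathcal{W}_{s_{\alpha}}$ does yield $\left[\alpha\xi_{1},\xi_{2}\right]_{L}=\alpha\left[\xi_{1},\xi_{2}\right]_{L}$. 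What this buys is self-containedness: you never leave the quasi-colimit diagram (\ref{2.2.4}), whereas the paper leans on an external reference. The cost shows in items 2 and 4, which are the thinnest part of your write-up: ``the corresponding lift into a $D(2)$-enriched version of the $D^{2}\oplus D$ picture is again elementary'' asserts rather than proves both the additivity of $\circledast$, $\widetilde{\circledast}$ in each slot and the additivity of the strong difference against simultaneous $D(2)$-realizers; these are true and provable by the same kind of diagram chase, but if you want to avoid writing them out you could simply observe, as the paper implicitly does, that once homogeneity (items 1 and 3) is known, additivity follows for free because the relevant spaces of icons are microlinear Euclidean modules.
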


\begin{proof}
The statements 1 and 3 follow from Proposition 5 in \S 3.4 of Lavendhomme
\cite{lav}, while the statements 2 and 4 follow from the statements 1 and 3 respectively.
\end{proof}

\begin{notation}
Given $\xi\in\left[  M\otimes\mathcal{W}_{D^{p}}\rightarrow M\right]
\otimes\mathcal{W}_{D^{n}}$ and $\sigma\in\mathbb{S}_{p}$, $\xi^{\sigma} $
denotes
\[
\left(  \left(  \,\right)  _{\left[  M\otimes\mathcal{W}_{D^{p}}\rightarrow
M\right]  }^{\sigma}\otimes\mathrm{id}_{\mathcal{W}_{D^{n}}}\right)  \left(
\xi\right)
\]
where $\left(  \,\right)  _{\left[  M\otimes\mathcal{W}_{D^{p}}\rightarrow
M\right]  }^{\sigma}:\left[  M\otimes\mathcal{W}_{D^{p}}\rightarrow M\right]
\rightarrow\left[  M\otimes\mathcal{W}_{D^{p}}\rightarrow M\right]  $ denotes
the operation
\[
\eta\in\left[  M\otimes\mathcal{W}_{D^{p}}\rightarrow M\right]  \mapsto
\eta\circ\left(  \mathrm{id}_{M}\otimes\mathcal{W}_{\left(  d_{1}%
,...,d_{p}\right)  \in D^{p}\mapsto\left(  d_{\sigma\left(  1\right)
},...,d_{\sigma\left(  p\right)  }\right)  \in D^{p}}\right)
\]

\end{notation}

We will show that the Lie bracket $\left[  \,\right]  _{L}$ is antisymmetric.

\begin{proposition}
\label{t4.1.9}Let $\xi_{1}$ be a $(1,p)$-icon on $M$ and $\xi_{2}$ a $(1,q)
$-icon on $M$. Then we have the following antisymmetry:
\[
\left[  \xi_{1},\xi_{2}\right]  _{L}+\left(  \left[  \xi_{2},\xi_{1}\right]
_{L}\right)  ^{\sigma_{p,q}}=0
\]

\end{proposition}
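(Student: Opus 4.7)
The plan is to unfold the definition of the Lie bracket on both sides, lift Proposition \ref{t4.1.1} from $\ast,\widetilde{\ast}$ to $\circledast,\widetilde{\circledast}$, and then conclude by the antisymmetry of $\overset{\cdot}{-}$.

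First I will note that the permutation operator $\left(\cdot\right)^{\sigma_{p,q}}$ acts on the $\left[M\otimes\mathcal{W}_{D^{p+q}}\rightarrow M\right]$-factor, whereas $\overset{\cdot}{-}$ is a construction performed entirely in the $\mathcal{W}$-factor; since these live in disjoint tensor slots, the two operations commute. Hence
\[
\left([\xi_{2},\xi_{1}]_{L}\right)^{\sigma_{p,q}}
=(\xi_{2}\widetilde{\circledast}\xi_{1})^{\sigma_{p,q}}\overset{\cdot}{-}(\xi_{2}\circledast\xi_{1})^{\sigma_{p,q}}.
\]

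Next, inspection of the definitions shows that $\circledast$ (resp.\ $\widetilde{\circledast}$) is obtained from $\ast$ (resp.\ $\widetilde{\ast}$) by tensoring with the identity on the Weil-algebra factor, while $\left(\cdot\right)^{\sigma_{p,q}}$ touches only the $\left[M\otimes\mathcal{W}_{D^{p+q}}\rightarrow M\right]$-component. Consequently Proposition \ref{t4.1.1} lifts verbatim to the identities $(\xi_{2}\circledast\xi_{1})^{\sigma_{p,q}}=\xi_{1}\widetilde{\circledast}\xi_{2}$ and $(\xi_{2}\widetilde{\circledast}\xi_{1})^{\sigma_{p,q}}=\xi_{1}\circledast\xi_{2}$, so that
\[
\left([\xi_{2},\xi_{1}]_{L}\right)^{\sigma_{p,q}}
=\xi_{1}\circledast\xi_{2}\overset{\cdot}{-}\xi_{1}\widetilde{\circledast}\xi_{2}.
\]
Adding this to $[\xi_{1},\xi_{2}]_{L}=\xi_{1}\widetilde{\circledast}\xi_{2}\overset{\cdot}{-}\xi_{1}\circledast\xi_{2}$ and invoking the antisymmetry $\alpha\overset{\cdot}{-}\beta+\beta\overset{\cdot}{-}\alpha=0$, which is valid whenever $\alpha,\beta$ agree on $D(2)$ (precisely the agreement furnished by Lemma \ref{t4.1.5}), one sees that the two resulting terms cancel and the sum vanishes.

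The main obstacle I anticipate is purely bookkeeping, namely verifying that $\left(\cdot\right)^{\sigma_{p,q}}$ genuinely commutes with $\overset{\cdot}{-}$ and that Proposition \ref{t4.1.1} transfers from $\ast,\widetilde{\ast}$ to $\circledast,\widetilde{\circledast}$. Both reduce to the observation that the relevant operations act in orthogonal tensor slots, so the verifications are routine but somewhat tedious to write out inside the large function spaces involved. The conceptual content of the proposition is thus concentrated entirely in the combinatorial block-swap identity of Proposition \ref{t4.1.1} combined with the elementary antisymmetry of the microsquare-difference operation.
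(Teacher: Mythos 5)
Your proposal is correct and follows essentially the same route as the paper: unfold the Lie bracket, commute $\left(\cdot\right)^{\sigma_{p,q}}$ past $\overset{\cdot}{-}$, apply the $\circledast$-level version of Proposition \ref{t4.1.1} to turn $\left(\xi_{2}\widetilde{\circledast}\xi_{1}\right)^{\sigma_{p,q}}$ and $\left(\xi_{2}\circledast\xi_{1}\right)^{\sigma_{p,q}}$ into $\xi_{1}\circledast\xi_{2}$ and $\xi_{1}\widetilde{\circledast}\xi_{2}$, and cancel by the antisymmetry of the strong difference (which the paper attributes to Propositions 4 and 6 in \S 3.4 of Lavendhomme). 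The ingredients you flag as routine bookkeeping are exactly the ones the paper also treats as immediate.
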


\begin{proof}
This follows from Propositions 4 and 6 in \S 3.4 of Lavendhomme \cite{lav}.
More specifically we have
\begin{align*}
&  \left[  \xi_{1},\xi_{2}\right]  _{L}+\left(  \left[  \xi_{2},\xi
_{1}\right]  _{L}\right)  ^{\sigma_{p,q}}\\
&  =(\xi_{1}\widetilde{\circledast}\xi_{2}\overset{\cdot}{-}\xi_{1}%
\circledast\xi_{2})+(\left(  \xi_{2}\widetilde{\circledast}\xi_{1}\right)
^{\sigma_{p,q}}\overset{\cdot}{-}\left(  \xi_{2}\circledast\xi_{1}\right)
^{\sigma_{p,q}})\\
&  =(\xi_{1}\widetilde{\circledast}\xi_{2}\overset{\cdot}{-}\xi_{1}%
\circledast\xi_{2})+\left(  \xi_{1}\circledast\xi_{2}\overset{\cdot}{-}\xi
_{1}\widetilde{\circledast}\xi_{2}\right) \\
&  \text{[By Proposition \ref{t4.1.1}]}\\
&  =0\text{ \ }%
\end{align*}

\end{proof}

\begin{theorem}
\label{t4.1.10}Let $\xi_{1}$ be a $(1,p)$-icon on $M$, $\xi_{2}$ a
$(1,q)$-icon on $M$, and $\xi_{3}$ a $(1,r)$-icon on $M$. Then we have the
following Jacobi identity:
\[
\left[  \xi_{1},\left[  \xi_{2},\xi_{3}\right]  _{L}\right]  _{L}+\left(
\left[  \xi_{2},\left[  \xi_{3},\xi_{1}\right]  _{L}\right]  _{L}\right)
^{\sigma_{p,q+r}}+\left(  \left[  \xi_{3},\left[  \xi_{1},\xi_{2}\right]
_{L}\right]  _{L}\right)  ^{\sigma_{r,p+q}}=0
\]

\end{theorem}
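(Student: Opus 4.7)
The plan is to reduce the Jacobi identity to the general Jacobi identity Theorem \ref{t2.3.2}, applied in the Fr\"{o}licher space $F := [M\otimes\mathcal{W}_{D^{p+q+r}}\rightarrow M]$. Because $M$ is microlinear and Weil exponentiable, $F$ inherits both properties from the cartesian closedness of the ambient category established in \cite{nishi-d}, so Theorem \ref{t2.3.2} applies verbatim with $M$ replaced by $F$ and the six $\gamma_{ijk}\in M\otimes\mathcal{W}_{D^3}$ replaced by suitable elements of $F\otimes\mathcal{W}_{D^3}$.

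First, for each permutation $\sigma\in\mathbb{S}_3$ I will construct an element $\gamma_\sigma\in F\otimes\mathcal{W}_{D^3}$ encoding the triple convolution of $\xi_1,\xi_2,\xi_3$ applied in the order dictated by $\sigma$. Using the compositional semantics $\eta_1\ast\eta_2=\eta_1\circ\eta_2$ and $\eta_1\widetilde{\ast}\eta_2=\eta_2\circ\eta_1$ exposed by the definitions of $\ast$ and $\widetilde{\ast}$, each of the four iterated convolutions $\xi_1\,\square_1\,\xi_2\,\square_2\,\xi_3$ with $\square_k\in\{\circledast,\widetilde{\circledast}\}$ realises one specific ordering of $\xi_1,\xi_2,\xi_3$; together these cover four of the six permutations in $\mathbb{S}_3$. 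The remaining two arise from the cyclic variants appearing in $[\xi_2,[\xi_3,\xi_1]_L]_L$ and $[\xi_3,[\xi_1,\xi_2]_L]_L$, after applying $\sigma_{p,q+r}$ and $\sigma_{r,p+q}$ to realign the $\mathcal{W}_{D^{p+q+r}}$ coordinates to the canonical ordering in which each $\xi_i$ sits in its natural $D^{p_i}$-slot. Each $\gamma_\sigma$ is then defined as the resulting iterated convolution, with the $k$-th direction of $\mathcal{W}_{D^3}$ always carrying $\xi_k$'s infinitesimal parameter regardless of $\sigma$.

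Next, I will expand each cyclic triple bracket. Distributivity of $\circledast$ and $\widetilde{\circledast}$ over the diagonal subtraction $\overset{\cdot}{-}$ follows from the smoothness of both operations together with the tangent-space arithmetic of Proposition 5 in \S 3.4 of \cite{lav} (the same tool that proves Proposition \ref{t4.1.12}), since for fixed $\xi_0$ the maps $\xi\mapsto\xi_0\,\square\,\xi$ and $\xi\mapsto\xi\,\square\,\xi_0$ are fibrewise linear on the tangent bundle of $F$. Therefore
\[
[\xi_1,[\xi_2,\xi_3]_L]_L=(\xi_1\widetilde{\circledast}\xi_2\widetilde{\circledast}\xi_3\overset{\cdot}{-}\xi_1\widetilde{\circledast}\xi_2\circledast\xi_3)\overset{\cdot}{-}(\xi_1\circledast\xi_2\widetilde{\circledast}\xi_3\overset{\cdot}{-}\xi_1\circledast\xi_2\circledast\xi_3),
\]
and analogously for the other two cyclic terms. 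By reading off, in each inner subtraction, which of the three directions of $D^3$ is being collapsed along the diagonal $D(2)\hookrightarrow D^2$, I will identify the three summands of the theorem, one by one, with the three double differences $(\gamma_{123}\overset{\cdot}{\underset{1}{-}}\gamma_{132})\overset{\cdot}{-}(\gamma_{231}\overset{\cdot}{\underset{1}{-}}\gamma_{321})$, and so on, of Theorem \ref{t2.3.2}.

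The identity then follows at once from Theorem \ref{t2.3.2} applied in $F\otimes\mathcal{W}_{D^3}$. The main obstacle is the combinatorial bookkeeping of the second step: matching each of the twelve iterated convolutions (four per cyclic term) with the correct $\gamma_\sigma$, and verifying that the cyclic coordinate permutations $\sigma_{p,q+r}$ and $\sigma_{r,p+q}$ built into the statement are exactly what is needed both to bring the cyclic terms into the canonical ordering of $\mathcal{W}_{D^{p+q+r}}$ and to route the three outer subtractions $\overset{\cdot}{-}$ into three \emph{distinct} infinitesimal directions of $D^3$, as required by the three-line pattern of Theorem \ref{t2.3.2}. Once this alignment is verified, the vanishing is immediate.
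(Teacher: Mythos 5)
Your proposal is correct and follows essentially the same route as the paper: the paper likewise introduces six $(3,p+q+r)$-icons $\xi_{123},\dots,\xi_{321}$ (elements of $\left[M\otimes\mathcal{W}_{D^{p+q+r}}\rightarrow M\right]\otimes\mathcal{W}_{D^{3}}$) given by the iterated convolutions of $\xi_{1},\xi_{2},\xi_{3}$ in $\circledast$ and $\widetilde{\circledast}$, uses the distributivity of these convolutions over $\overset{\cdot}{-}$ (Lemma \ref{t4.1.11}, which is your "fibrewise linearity" step) to write each cyclic double bracket as a double difference of these six, and then invokes the general Jacobi identity of Theorem \ref{t2.3.2}. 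The only cosmetic discrepancy is your count of "four" unparenthesized triple convolutions covering four permutations: in the paper the six icons arise as the two associatively unambiguous words plus the two parenthesizations of each of the two mixed words, but this does not affect the argument.
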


In order to establish the above theorem, we need the following simple lemma,
which is a tiny generalization of Proposition 2.6 of \cite{nishi-a}.

\begin{lemma}
\label{t4.1.11}Let $\xi$ be an $(1,p)$-icon on $M$, and $\xi_{1}$ and $\xi
_{2}$ $(2,q)$-icons on $M$ with
\begin{align*}
&  \left(  \mathrm{id}_{\left[  M\otimes\mathcal{W}_{D^{q}}\rightarrow
M\right]  }\otimes\mathcal{W}_{\left(  d_{1},d_{2}\right)  \in D(2)\mapsto
\left(  d_{1},d_{2}\right)  \in D^{2}}\right)  \left(  \xi_{1}\right) \\
&  =\left(  \mathrm{id}_{\left[  M\otimes\mathcal{W}_{D^{q}}\rightarrow
M\right]  }\otimes\mathcal{W}_{\left(  d_{1},d_{2}\right)  \in D(2)\mapsto
\left(  d_{1},d_{2}\right)  \in D^{2}}\right)  \left(  \xi_{2}\right)
\end{align*}
Then the following formulas are both meaningful and valid.
\begin{align*}
\xi\circledast\xi_{1}\underset{1}{\overset{\cdot}{-}}\xi\circledast\xi_{2}  &
=\xi\circledast(\xi_{1}\overset{\cdot}{-}\xi_{2})\\
\xi\widetilde{\circledast}\xi_{1}\underset{1}{\overset{\cdot}{-}}\xi
\widetilde{\circledast}\xi_{2}  &  =\xi\widetilde{\circledast}(\xi_{1}%
\overset{\cdot}{-}\xi_{2})\\
\xi_{1}\circledast\xi\underset{3}{\overset{\cdot}{-}}\xi_{2}\circledast\xi &
=(\xi_{1}\overset{\cdot}{-}\xi_{2})\circledast\xi\\
\xi_{1}\widetilde{\circledast}\xi\underset{3}{\overset{\cdot}{-}}\xi
_{2}\widetilde{\circledast}\xi &  =(\xi_{1}\overset{\cdot}{-}\xi
_{2})\widetilde{\circledast}\xi
\end{align*}

\end{lemma}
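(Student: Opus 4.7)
The plan is to unwind the definitions of $\circledast$ and $\widetilde{\circledast}$ and exploit the fact that both are built as compositions of (i) reindexings on the Weil-tensor factor $\mathcal{W}_{D^{m+n}}$ induced by Weil-algebra morphisms, and (ii) the binary operation $\ast$ (respectively $\widetilde{\ast}$) applied to the function-space factors, tensored with the identity on $\mathcal{W}_{D^{m+n}}$.

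First I would observe that the hypothesis on $\xi_{1},\xi_{2}$ says exactly that their Taylor expansions in the sense of Theorem \ref{t2.4.1}, carried out in the Euclidean microlinear vector space $\left[M\otimes\mathcal{W}_{D^{q}}\rightarrow M\right]$ guaranteed by Propositions \ref{t2.5.3}--\ref{t2.5.5}, share the constant term, the $x_{1}$-coefficient, and the $x_{2}$-coefficient; they may differ only in their $x_{1}x_{2}$-coefficient. The strong subtraction $\xi_{1}\overset{\cdot}{-}\xi_{2}\in\left[M\otimes\mathcal{W}_{D^{q}}\rightarrow M\right]\otimes\mathcal{W}_{D}$ extracts precisely the difference of these two $x_{1}x_{2}$-coefficients, in direct analogy with Proposition \ref{t2.4.3}.

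Next I would note that, for fixed $\xi\in\left[M\otimes\mathcal{W}_{D^{p}}\rightarrow M\right]\otimes\mathcal{W}_{D}$, the assignment $\chi\mapsto\xi\circledast\chi$ factors as: pull the Weil component of $\chi$ back along the projection $\mathcal{W}_{D^{1+n}}\rightarrow\mathcal{W}_{D^{n}}$; pair with the similarly reindexed $\xi$; and finally apply $\ast\otimes\mathrm{id}_{\mathcal{W}_{D^{1+n}}}$. Each of these three steps is of the form (smooth map of function-space or Weil factor) $\otimes\,\mathrm{id}$, hence commutes with the Euclidean subtraction of Proposition \ref{t2.4.3} that lives entirely inside the $\mathcal{W}$-factor. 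Applying this naturality separately to $\chi=\xi_{1}$ and $\chi=\xi_{2}$ and subtracting yields the first identity; the index $1$ on $\underset{1}{\overset{\cdot}{-}}$ records the position of the surviving $D$-coordinate after the reindexing places $\xi$'s $D$ first. The tilde version is literally the same argument with $\widetilde{\ast}$ in place of $\ast$, and the last two identities follow symmetrically by applying the same reasoning in the opposite slot; the reindexing then places the surviving $D$ into the third coordinate, which explains the index $3$ on $\underset{3}{\overset{\cdot}{-}}$.

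The main obstacle I expect is purely bookkeeping: tracking which $D$-factor of $D^{m+n}$ encodes the ``subtraction direction'' after the coordinate extractions $(d_{1},\ldots,d_{m+n})\mapsto(d_{1},\ldots,d_{m})$ and $(d_{1},\ldots,d_{m+n})\mapsto(d_{m+1},\ldots,d_{m+n})$ built into the definitions of $\circledast$ and $\widetilde{\circledast}$, and verifying that the numerical subscript $i$ on $\underset{i}{\overset{\cdot}{-}}$ matches the slot into which the reindexing places this surviving $D$. Once this is settled, the lemma reduces to the elementary observation that any map of the form (morphism)$\,\otimes\,\mathrm{id}_{\mathcal{W}_{D^{2}}}$ commutes with the strong subtraction of two elements of (anything)$\,\otimes\,\mathcal{W}_{D^{2}}$ that agree on $D(2)$.
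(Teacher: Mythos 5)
There is a genuine gap in your first paragraph: $\left[M\otimes\mathcal{W}_{D^{q}}\rightarrow M\right]$ is a mapping space into $M$, not into a vector space, so it is \emph{not} a Euclidean vector space, and Propositions \ref{t2.5.3}--\ref{t2.5.5} (which concern $\Omega^{n}(M)$ and $\widetilde{\Omega}^{n}(M)$, the spaces of \emph{real-valued} forms) say nothing about it. Consequently Theorem \ref{t2.4.1} and Proposition \ref{t2.4.3} cannot be invoked, and there are no ``Taylor coefficients'' whose agreement encodes the hypothesis. The correct reading of the hypothesis and of $\xi_{1}\overset{\cdot}{-}\xi_{2}$ is the synthetic one: $\left[M\otimes\mathcal{W}_{D^{q}}\rightarrow M\right]$ is microlinear, so the quasi-colimit diagram (\ref{2.2.4}) produces a unique $\gamma\in\left[M\otimes\mathcal{W}_{D^{q}}\rightarrow M\right]\otimes\mathcal{W}_{D^{2}\oplus D}$ restricting to $\xi_{2}$ along $\mathcal{W}_{\varphi}$ and to $\xi_{1}$ along $\mathcal{W}_{\psi}$, and the strong difference is the restriction of $\gamma$ along $d\mapsto(0,0,d)$. (For what it is worth, the paper itself offers no proof here either; it defers to Proposition 2.6 of \cite{nishi-a}, which is exactly this naturality statement for the strong difference.)

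Your second and third paragraphs do contain the right idea, but the decomposition as stated does not quite deliver it: the step ``pair with the similarly reindexed $\xi$'' is a map $\beta\mapsto(\tilde{\xi},\beta)$ into a product, which is not of the form $(\text{morphism})\otimes\mathrm{id}_{\mathcal{W}_{D^{3}}}$, so your closing ``elementary observation'' does not apply to it as written. The fix is to regroup: for fixed $\xi$, the assignment $\chi\mapsto\xi\circledast\chi$ on $\left[M\otimes\mathcal{W}_{D^{q}}\rightarrow M\right]\otimes\mathcal{W}_{D^{2}}$ equals $G\otimes\mathrm{id}_{\mathcal{W}_{D^{2}}}$, where $G:\eta\mapsto\xi\circledast\eta$ sends $\left[M\otimes\mathcal{W}_{D^{q}}\rightarrow M\right]$ into $\left[M\otimes\mathcal{W}_{D^{p+q}}\rightarrow M\right]\otimes\mathcal{W}_{D}$ (this uses (\ref{2.2.2}) to rewrite $\mathcal{W}_{D^{3}}$ as $\mathcal{W}_{D}\otimes_{\infty}\mathcal{W}_{D^{2}}$). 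Once in this form, functoriality of $\cdot\otimes W$ applied to the witness $\gamma$ above immediately yields a witness for $\xi\circledast\xi_{1}\underset{1}{\overset{\cdot}{-}}\xi\circledast\xi_{2}$ and identifies it with $\xi\circledast(\xi_{1}\overset{\cdot}{-}\xi_{2})$; the subscripts $1$ and $3$ are then, as you say, pure bookkeeping of which $D$-coordinate survives. So the strategy is salvageable, but as written the proof rests on a false premise (Euclideanness of the mapping space) and on a naturality claim applied to a map that is not of the required form.
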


\begin{proof}
(of Theorem \ref{t4.1.10}). Our present discussion is a tiny generalization of
Proposition 2.7 in \cite{nishi-a}. We define six $(3,p+q+r)$-icons on $M$ as
follows:
\begin{align*}
\xi_{123} &  =\xi_{1}\circledast\xi_{2}\circledast\xi_{3}\\
\xi_{132} &  =\xi_{1}\circledast(\xi_{2}\widetilde{\circledast}\xi_{3})\\
\xi_{213} &  =(\xi_{1}\widetilde{\circledast}\xi_{2})\circledast\xi_{3}\\
\xi_{231} &  =\xi_{1}\widetilde{\circledast}(\xi_{2}\circledast\xi_{3})\\
\xi_{312} &  =(\xi_{1}\circledast\xi_{2})\widetilde{\circledast}\xi_{3}\\
\xi_{321} &  =\xi_{1}\widetilde{\circledast}\xi_{2}\widetilde{\circledast}%
\xi_{3}%
\end{align*}
Then it is easy, by dint of Lemma \ref{t4.1.11}, to see that
\begin{align}
\left[  \xi_{1},\left[  \xi_{2},\xi_{3}\right]  _{L}\right]  _{L} &
=(\xi_{123}\underset{1}{\overset{\cdot}{-}}\xi_{132})\overset{\cdot}{-}%
(\xi_{231}\underset{1}{\overset{\cdot}{-}}\xi_{321})\label{J1}\\
\left(  \left[  \xi_{2},\left[  \xi_{3},\xi_{1}\right]  _{L}\right]
_{L}\right)  ^{\sigma_{p,q+r}} &  =(\xi_{231}\underset{2}{\overset{\cdot}{-}%
}\xi_{213})\overset{\cdot}{-}(\xi_{312}\underset{2}{\overset{\cdot}{-}}%
\xi_{132})\label{J2}\\
\left(  \left[  \xi_{3},\left[  \xi_{1},\xi_{2}\right]  _{L}\right]
_{L}\right)  ^{\sigma_{r,p+q}} &  =(\xi_{312}\underset{3}{\overset{\cdot}{-}%
}\xi_{321})\overset{\cdot}{-}(\xi_{123}\underset{3}{\overset{\cdot}{-}}%
\xi_{213})\label{J3}%
\end{align}
Therefore the desired Jacobi identity follows directly from the general Jacobi identity.
\end{proof}

\begin{remark}
In order to see that the right-hand side of (\ref{J1}) is meaningful, we have
to check that all of
\begin{align*}
&  \xi_{123}\underset{1}{\overset{\cdot}{-}}\xi_{132}\\
&  \xi_{231}\underset{1}{\overset{\cdot}{-}}\xi_{321}\\
&  (\xi_{123}\underset{1}{\overset{\cdot}{-}}\xi_{132})\overset{\cdot}{-}%
(\xi_{231}\underset{1}{\overset{\cdot}{-}}\xi_{321})
\end{align*}
are meaningful. Since $\xi_{2}\circledast\xi_{3}\overset{\cdot}{-}\xi
_{2}\widetilde{\circledast}\xi_{3}$ is meaningful by Lemma \ref{t4.1.5},
$\xi_{123}\underset{1}{\overset{\cdot}{-}}\xi_{132}$ is also meaningful and we
have
\[
\xi_{123}\underset{1}{\overset{\cdot}{-}}\xi_{132}=\xi_{1}\circledast(\xi
_{2}\circledast\xi_{3}\overset{\cdot}{-}\xi_{2}\widetilde{\circledast}\xi_{3})
\]
by Lemma \ref{t4.1.11}. Similarly $\xi_{231}\underset{1}{\overset{\cdot}{-}%
}\xi_{321}$ is meaningful and we have
\[
\xi_{231}\underset{1}{\overset{\cdot}{-}}\xi_{321}=\xi_{1}\widetilde
{\circledast}(\xi_{2}\circledast\xi_{3}\overset{\cdot}{-}\xi_{2}%
\widetilde{\circledast}\xi_{3})
\]
Therefore $(\xi_{123}\underset{1}{\overset{\cdot}{-}}\xi_{132})\overset{\cdot
}{-}(\xi_{231}\underset{1}{\overset{\cdot}{-}}\xi_{321})$ is meaningful by
Lemma \ref{t4.1.5}. Similar considerations apply to (\ref{J2}) and (\ref{J3}).
\end{remark}

\subsection{The Jacobi Identity for the Fr\"{o}licher-Nijenhuis Bracket}

\begin{definition}
Given a $(1,p)$-icon $\xi$\ on $M$, we define another $(1,p)$-icon
$\mathcal{A}\xi$\ on $M$ to be
\[
\mathcal{A}\xi=\sum_{\sigma\in\mathbb{S}_{p}}\varepsilon_{\sigma}\xi^{\sigma}%
\]

\end{definition}

\begin{notation}
Given a $(1,p+q)$-icon $\xi$\ on $M$, we write $\mathcal{A}_{p,q}\xi$ for
$(1/p!q!)\mathcal{A}\xi$. Given a $(1,p+q+r)$-icon $\xi$\ on $M$, We write
$\mathcal{A}_{p,q,r}\xi$ for $(1/p!q!r!)\mathcal{A}\xi$.
\end{notation}

\begin{lemma}
\label{t4.2.1}If $\xi_{1}$ is a tangent-vector-valued $p$-form on $M$,
$\xi_{2}$ is a tangent-vector-valued $q$-form on $M$ and $\xi_{3}$ is a
tangent-vector-valued $r$-form on $M$, then we have
\[
\mathcal{A}_{p,q+r}(\left[  \xi_{1},\mathcal{A}_{q,r}\left(  \left[  \xi
_{2},\xi_{3}\right]  _{L}\right)  \right]  _{L})=\mathcal{A}_{p,q,r}(\left[
\xi_{1},\left[  \xi_{2},\xi_{3}\right]  _{L}\right]  _{L})
\]

\end{lemma}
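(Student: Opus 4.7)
The plan is to prove Lemma \ref{t4.2.1} by a direct expansion of the antisymmetrizers, in parallel with the proof of Proposition \ref{t2.5.2'} for differential forms. The key ingredients will be the bilinearity of the Lie bracket (Proposition \ref{t4.1.12}) together with an equivariance property showing that $[\xi_{1},\cdot]_{L}$ commutes, up to an index-shift, with the permutation action on the ``right'' block of indices.

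First, I would expand the inner antisymmetrizer as
\[
\mathcal{A}_{q,r}\bigl([\xi_{2},\xi_{3}]_{L}\bigr)=\frac{1}{q!r!}\sum_{\sigma\in\mathbb{S}_{q+r}}\varepsilon_{\sigma}\,\bigl([\xi_{2},\xi_{3}]_{L}\bigr)^{\sigma}.
\]
Using Proposition \ref{t4.1.12} to pull this sum outside of $[\xi_{1},\cdot]_{L}$, the substance of the argument reduces to showing that, for $\sigma\in\mathbb{S}_{q+r}$ and its extension $\hat{\sigma}\in\mathbb{S}_{p+q+r}$ by the identity on $\{1,\dots,p\}$, one has the equivariance
\[
[\xi_{1},\eta^{\sigma}]_{L}=\bigl([\xi_{1},\eta]_{L}\bigr)^{\hat{\sigma}}
\]
for any tangent-vector-valued $(q+r)$-semiform $\eta$. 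This is read off from the definitions of $\circledast$ and $\widetilde{\circledast}$: permuting the last $q+r$ slots of $\xi_{1}\circledast\eta$ or $\xi_{1}\widetilde{\circledast}\eta$ by $\sigma$ coincides with applying $\hat{\sigma}$ to the entire $(p+q+r)$-tuple, and the definition of $[\cdot,\cdot]_{L}$ as $\widetilde{\circledast}\overset{\cdot}{-}\circledast$ is preserved.

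Next, I would apply $\mathcal{A}_{p,q+r}=\tfrac{1}{p!(q+r)!}\mathcal{A}$ and invoke the standard identity $\mathcal{A}(\psi^{\hat{\sigma}})=\varepsilon_{\hat{\sigma}}\mathcal{A}(\psi)=\varepsilon_{\sigma}\mathcal{A}(\psi)$. The two factors of $\varepsilon_{\sigma}$ then cancel, so the sum over $\mathbb{S}_{q+r}$ collapses to $(q+r)!$ copies of the same term:
\[
\mathcal{A}_{p,q+r}\bigl([\xi_{1},\mathcal{A}_{q,r}[\xi_{2},\xi_{3}]_{L}]_{L}\bigr)=\frac{(q+r)!}{p!(q+r)!\,q!r!}\,\mathcal{A}\bigl([\xi_{1},[\xi_{2},\xi_{3}]_{L}]_{L}\bigr)=\frac{1}{p!q!r!}\,\mathcal{A}\bigl([\xi_{1},[\xi_{2},\xi_{3}]_{L}]_{L}\bigr),
\]
which is by definition $\mathcal{A}_{p,q,r}\bigl([\xi_{1},[\xi_{2},\xi_{3}]_{L}]_{L}\bigr)$.

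The main obstacle is the equivariance step $[\xi_{1},\eta^{\sigma}]_{L}=([\xi_{1},\eta]_{L})^{\hat{\sigma}}$, which needs careful index-bookkeeping through the asymmetric convolutions $\circledast$ and $\widetilde{\circledast}$: one has to verify that, in the Weil-algebra machinery that underlies their definitions, applying $\mathcal{W}_{\hat{\sigma}_{D^{p+q+r}}}$ to each convolution pulls back through the splitting of $D^{p+q+r}=D^{p}\times D^{q+r}$ into the intended $\sigma$-action on the second factor. Once this is in hand, the rest is a routine rearrangement of sums using only bilinearity of the bracket and the sign character of $\mathbb{S}_{q+r}$.
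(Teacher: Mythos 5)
Your argument is correct and is precisely what the paper has in mind: its proof of Lemma \ref{t4.2.1} consists of the single remark ``by the same token as in establishing the familiar associativity of wedge products,'' and the token in question is exactly the expansion of the inner antisymmetrizer over $\mathbb{S}_{q+r}$, the sign cancellation via $\mathcal{A}(\psi^{\hat{\sigma}})=\varepsilon_{\hat{\sigma}}\mathcal{A}(\psi)$ with $\varepsilon_{\hat{\sigma}}=\varepsilon_{\sigma}$, and the collapse of the sum to $(q+r)!$ equal terms that you carry out. The two ingredients you correctly isolate as needed to transplant that argument from $\otimes$ to $\left[\cdot,\cdot\right]_{L}$ --- bilinearity of the Lie bracket (Proposition \ref{t4.1.12}) and the equivariance $\left[\xi_{1},\eta^{\sigma}\right]_{L}=\left(\left[\xi_{1},\eta\right]_{L}\right)^{\hat{\sigma}}$, which holds because $\circledast$, $\widetilde{\circledast}$ and $\overset{\cdot}{-}$ all commute with precomposition by $\mathrm{id}_{M}\otimes\mathcal{W}_{\hat{\sigma}_{D^{p+q+r}}}$ acting on the last $q+r$ slots --- are exactly the right ones.
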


\begin{proof}
By the same token as in establishing the familiar associativity of wedge
products in differential forms.
\end{proof}

\begin{definition}
Given a tangent-vector-valued $p$\textit{-form }$\xi_{1}$ on $M$ and a
tangent-vector-valued $q$-form $\xi_{2}$ on $M$, we are going to define their
\textit{Fr\"{o}licher-Nijenhuis bracket} $\left[  \xi_{1},\xi_{2}\right]
_{FN}$ to be
\[
\left[  \xi_{1},\xi_{2}\right]  _{FN}=\mathcal{A}_{p,q}\mathcal{(}\left[
\xi_{1},\xi_{2}\right]  _{L})
\]
which is undoubtedly a tangent-vector-valued $(p+q)$-form on $M$.
\end{definition}

\begin{proposition}
\label{t4.2.2}If $\xi_{1}$ is a tangent-vector-valued $p$-form on $M$ and
$\xi_{2}$ is a tangent-vector-valued $q$-form on $M$, then we have the
following graded antisymmetry:
\[
\left[  \xi_{1},\xi_{2}\right]  _{FN}=-(-1)^{pq}\left[  \xi_{2},\xi
_{1}\right]  _{FN}%
\]

\end{proposition}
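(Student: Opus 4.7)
The plan is to deduce the graded antisymmetry of the Fr\"{o}licher-Nijenhuis bracket directly from the antisymmetry of the underlying Lie bracket in Proposition \ref{t4.1.9}, by applying the alternator $\mathcal{A}_{p,q}$ to both sides and tracking how it interacts with the permutation $\sigma_{p,q}$.

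First I would rewrite Proposition \ref{t4.1.9} in the form
$$[\xi_{1},\xi_{2}]_{L}=-\left([\xi_{2},\xi_{1}]_{L}\right)^{\sigma_{p,q}}$$
and apply $\mathcal{A}_{p,q}$ to both sides. By the very definition of the Fr\"{o}licher-Nijenhuis bracket, the left-hand side turns into $[\xi_{1},\xi_{2}]_{FN}$, and it remains to identify $\mathcal{A}_{p,q}\bigl(([\xi_{2},\xi_{1}]_{L})^{\sigma_{p,q}}\bigr)$ with $(-1)^{pq}[\xi_{2},\xi_{1}]_{FN}$.

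Second I would establish the auxiliary identity $\mathcal{A}(\zeta^{\sigma})=\varepsilon_{\sigma}\mathcal{A}(\zeta)$ for any $(1,p+q)$-icon $\zeta$ and any $\sigma\in\mathbb{S}_{p+q}$. This is a standard reindexing: using the functoriality $(\zeta^{\sigma})^{\tau}=\zeta^{\tau\sigma}$ inherited from the contravariant functor $\mathcal{W}$, the substitution $\tau^{\prime}=\tau\sigma$ in $\sum_{\tau}\varepsilon_{\tau}(\zeta^{\sigma})^{\tau}$ extracts the factor $\varepsilon_{\sigma}$ and rebuilds $\mathcal{A}(\zeta)$. I would also note that $\mathcal{A}_{p,q}$ and $\mathcal{A}_{q,p}$ agree as operators on $(1,p+q)$-icons, both being $(1/p!q!)\mathcal{A}$, so that $\mathcal{A}_{p,q}([\xi_{2},\xi_{1}]_{L})=[\xi_{2},\xi_{1}]_{FN}$.

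Third I would compute the sign $\varepsilon_{\sigma_{p,q}}=(-1)^{pq}$: the cyclic shift $\sigma_{p,q}$ sends each of the first $p$ indices past the $q$ remaining indices, which requires exactly $pq$ adjacent transpositions. Combining the three ingredients gives
$$[\xi_{1},\xi_{2}]_{FN}=-\mathcal{A}_{p,q}\bigl(([\xi_{2},\xi_{1}]_{L})^{\sigma_{p,q}}\bigr)=-(-1)^{pq}\mathcal{A}_{p,q}([\xi_{2},\xi_{1}]_{L})=-(-1)^{pq}[\xi_{2},\xi_{1}]_{FN},$$
which is the desired graded antisymmetry. The argument is essentially bookkeeping and no genuine obstacle arises; the only delicate point is the correct interpretation of the composition convention $(\zeta^{\sigma})^{\tau}=\zeta^{\tau\sigma}$, but the reindexing of $\mathbb{S}_{p+q}$ by right multiplication is robust enough that the conclusion is unchanged even if the opposite convention is adopted.
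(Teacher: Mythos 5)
Your proposal is correct and follows essentially the same route as the paper: apply $\mathcal{A}_{p,q}$ to the antisymmetry of Proposition \ref{t4.1.9}, use the reindexing identity $\mathcal{A}(\zeta^{\sigma})=\varepsilon_{\sigma}\mathcal{A}(\zeta)$ via $(\zeta^{\sigma})^{\tau}=\zeta^{\tau\sigma}$, and conclude with $\varepsilon_{\sigma_{p,q}}=(-1)^{pq}$. The paper carries out exactly this computation inline, so no further comment is needed.
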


\begin{proof}
We have
\begin{align*}
&  \left[  \xi_{1},\xi_{2}\right]  _{FN}\\
&  =\mathcal{A}_{p,q}\mathcal{(}\left[  \xi_{1},\xi_{2}\right]  _{L})\\
&  =-\mathcal{A}_{p,q}\mathcal{(}\left(  \left[  \xi_{2},\xi_{1}\right]
_{L}\right)  ^{\sigma_{p,q}})\text{ \ \ [By Proposition \ref{t4.1.9}]}\\
&  =-\frac{1}{p!q!}\sum_{\tau\in\mathbb{S}_{p+q}}\varepsilon_{\tau}\left(
\left(  \left[  \xi_{2},\xi_{1}\right]  _{L}\right)  ^{\sigma_{p,q}}\right)
^{\tau}\\
&  =-\frac{1}{p!q!}\sum_{\tau\in\mathbb{S}_{p+q}}\varepsilon_{\tau}\left(
\left[  \xi_{2},\xi_{1}\right]  _{L}\right)  ^{\tau\sigma_{p,q}}\\
&  =-\frac{1}{p!q!}\varepsilon_{\sigma_{p,q}}\sum_{\tau\in\mathbb{S}_{p+q}%
}\varepsilon_{\tau\sigma_{p,q}}\left(  \left[  \xi_{2},\xi_{1}\right]
_{L}\right)  ^{\tau\sigma_{p,q}}\\
&  =-\varepsilon_{\sigma_{p,q}}\left[  \xi_{2},\xi_{1}\right]  _{FN}%
\end{align*}
Since $\varepsilon_{\rho}=(-1)^{pq}$, the desired conclusion follows.
\end{proof}

\begin{theorem}
\label{t4.2.3}If $\xi_{1}$ is a tangent-vector-valued $p$-form on $M$,
$\xi_{2}$ is a tangent-vector-valued $q$-form on $M$ and $\xi_{3}$ is a
tangent-vector-valued $r$-form on $M$, then the following graded Jacobi
identity holds:
\[
\left[  \xi_{1},\left[  \xi_{2},\xi_{3}\right]  _{FN}\right]  _{FN}%
+(-1)^{p(q+r)}\left[  \xi_{2},\left[  \xi_{3},\xi_{1}\right]  _{FN}\right]
_{FN}+(-1)^{r(p+q)}\left[  \xi_{3},\left[  \xi_{1},\xi_{2}\right]
_{FN}\right]  _{FN}=0
\]

\end{theorem}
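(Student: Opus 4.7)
The strategy is to apply the total alternator $\mathcal{A}_{p,q,r}$ to the ungraded Jacobi identity for the Lie bracket established in Theorem~\ref{t4.1.10},
\[
[\xi_1,[\xi_2,\xi_3]_L]_L + \bigl([\xi_2,[\xi_3,\xi_1]_L]_L\bigr)^{\sigma_{p,q+r}} + \bigl([\xi_3,[\xi_1,\xi_2]_L]_L\bigr)^{\sigma_{r,p+q}} = 0,
\]
and to convert the two block-shift permutations on the left-hand side into the graded signs $(-1)^{p(q+r)}$ and $(-1)^{r(p+q)}$ that appear in the statement of Theorem~\ref{t4.2.3}.

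The key preliminary is the standard bookkeeping identity
\[
\mathcal{A}_{p,q,r}(\eta^\tau)=\varepsilon_\tau\,\mathcal{A}_{p,q,r}(\eta)\qquad(\tau\in\mathbb{S}_{p+q+r}),
\]
which I would verify by re-indexing $\sum_\sigma\varepsilon_\sigma\eta^\sigma$ via $\rho=\sigma\tau$, using the composition law $(\eta^\sigma)^\tau=\eta^{\sigma\tau}$ immediate from the definition of $\eta^\sigma$ and the contravariance of precomposition. Combined with the elementary sign computations $\varepsilon_{\sigma_{p,q+r}}=(-1)^{p(q+r)}$ and $\varepsilon_{\sigma_{r,p+q}}=(-1)^{r(p+q)}$ for the block shifts, applying $\mathcal{A}_{p,q,r}$ termwise to Theorem~\ref{t4.1.10} yields
\[
\mathcal{A}_{p,q,r}[\xi_1,[\xi_2,\xi_3]_L]_L + (-1)^{p(q+r)}\mathcal{A}_{p,q,r}[\xi_2,[\xi_3,\xi_1]_L]_L + (-1)^{r(p+q)}\mathcal{A}_{p,q,r}[\xi_3,[\xi_1,\xi_2]_L]_L = 0.
\]

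Finally, each alternated triple bracket above collapses into the corresponding iterated Frölicher--Nijenhuis bracket by two uses of Lemma~\ref{t4.2.1}; for the first term,
\[
\mathcal{A}_{p,q,r}[\xi_1,[\xi_2,\xi_3]_L]_L = \mathcal{A}_{p,q+r}[\xi_1,\mathcal{A}_{q,r}[\xi_2,\xi_3]_L]_L = \mathcal{A}_{p,q+r}[\xi_1,[\xi_2,\xi_3]_{FN}]_L = [\xi_1,[\xi_2,\xi_3]_{FN}]_{FN},
\]
and analogously for the other two terms, after observing that $\mathcal{A}_{q,r,p}$ and $\mathcal{A}_{r,p,q}$ coincide with $\mathcal{A}_{p,q,r}$ as operators (the same signed sum over $\mathbb{S}_{p+q+r}$, divided by the same normalizing constant $1/(p!q!r!)$). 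Substituting these three identifications into the displayed relation produces the graded Jacobi identity. The only content beyond routine bookkeeping is the sign computation for the block-shift permutations and the composition law $(\eta^\sigma)^\tau=\eta^{\sigma\tau}$; these are what convert the unsigned cyclic permutations of Theorem~\ref{t4.1.10} into the Koszul-graded signs of Theorem~\ref{t4.2.3}, and so constitute the only genuine hinge in the argument.
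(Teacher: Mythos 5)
Your proposal is correct and follows essentially the same route as the paper: both arguments rest on Theorem \ref{t4.1.10} together with Lemma \ref{t4.2.1}, plus the bookkeeping fact that the alternator converts the block-shift superscripts $\sigma_{p,q+r}$ and $\sigma_{r,p+q}$ into the signs $(-1)^{p(q+r)}$ and $(-1)^{r(p+q)}$. The only difference is direction --- you apply $\mathcal{A}_{p,q,r}$ to the ungraded identity and then collapse, whereas the paper unfolds the graded left-hand side down to the same alternated expression --- which is an immaterial reordering of identical steps.
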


\begin{proof}
We have
\begin{align*}
&  \left[  \xi_{1},\left[  \xi_{2},\xi_{3}\right]  _{FL}\right]
_{FL}+(-1)^{p(q+r)}\left[  \xi_{2},\left[  \xi_{3},\xi_{1}\right]
_{FL}\right]  _{FL}+(-1)^{r(p+q)}\left[  \xi_{3},\left[  \xi_{1},\xi
_{2}\right]  _{FL}\right]  _{FL}\\
&  =\mathcal{A}_{p,q+r}(\left[  \xi_{1},\mathcal{A}_{q,r}\left(  \left[
\xi_{2},\xi_{3}\right]  _{L}\right)  \right]  _{L})+(-1)^{p(q+r)}%
\mathcal{A}_{q,p+r}(\left[  \xi_{2},\mathcal{A}_{p,r}\left(  \left[  \xi
_{3},\xi_{1}\right]  _{L}\right)  \right]  _{L})+\\
&  (-1)^{r(p+q)}\mathcal{A}_{r,p+q}(\left[  \xi_{3},\mathcal{A}_{p,q}\left(
\left[  \xi_{1},\xi_{2}\right]  _{L}\right)  \right]  _{L})\\
&  =\mathcal{A}_{p,q,r}\left\{  \left[  \xi_{1},\left[  \xi_{2},\xi
_{3}\right]  _{L}\right]  _{L}+(-1)^{p(q+r)}\left[  \xi_{2},\left[  \xi
_{3},\xi_{1}\right]  _{L}\right]  _{L}+(-1)^{r(p+q)}\left[  \xi_{3},\left[
\xi_{1},\xi_{2}\right]  _{L}\right]  _{L}\right\} \\
&  \text{[By Lemma \ref{t4.2.1}]}\\
&  =\mathcal{A}_{p,q,r}\left\{  \left[  \xi_{1},\left[  \xi_{2},\xi
_{3}\right]  _{L}\right]  _{L}+\left(  \left[  \xi_{2},\left[  \xi_{3},\xi
_{1}\right]  _{L}\right]  _{L}\right)  ^{\sigma_{p,q+r}}+\left(  \left[
\xi_{3},\left[  \xi_{1},\xi_{2}\right]  _{L}\right]  _{L}\right)
^{\sigma_{r,p+q}}\right\} \\
&  =0\\
&  \text{\lbrack By Theorem \ref{t4.1.10}]}%
\end{align*}

\end{proof}

\section{The Lie Derivation}

\subsection{The Lie Derivation of the First Type}

\begin{definition}
Given $\eta\in\left[  M\otimes\mathcal{W}_{D^{p}}\rightarrow M\right]  $ and
$\theta\in\left[  M\otimes\mathcal{W}_{D^{q}}\rightarrow\mathbb{R}\right]  $,
their convolution $\eta\widetilde{\ast}\theta\in\left[  M\otimes
\mathcal{W}_{D^{p+q}}\rightarrow\mathbb{R}\right]  $ is defined to be the
outcome of the composition of mappings
\begin{align*}
M\otimes\mathcal{W}_{D^{p+q}}  &  =M\otimes\left(  \mathcal{W}_{D^{p}}%
\otimes_{\infty}\mathcal{W}_{D^{q}}\right) \\
&  =\left(  M\otimes\mathcal{W}_{D^{p}}\right)  \otimes\mathcal{W}_{D^{q}}
\begin{array}
[c]{c}%
\eta\otimes\mathrm{id}_{\mathcal{W}_{D^{q}}}\\
\rightarrow\\
\,
\end{array}
M\otimes\mathcal{W}_{D^{q}}
\begin{array}
[c]{c}%
\theta\\
\rightarrow\\
\,
\end{array}
\mathbb{R}%
\end{align*}

\end{definition}

It should be obvious that

\begin{proposition}
\label{t5.1.1}Given $\eta_{1}\in\left[  M\otimes\mathcal{W}_{D^{p}}\rightarrow
M\right]  $, $\eta_{2}\in\left[  M\otimes\mathcal{W}_{D^{q}}\rightarrow
M\right]  $ and $\theta\in\left[  M\otimes\mathcal{W}_{D^{r}}\rightarrow
\mathbb{R}\right]  $, we have
\[
(\eta_{1}\widetilde{\ast}\eta_{2})\widetilde{\ast}\theta=\eta_{1}%
\widetilde{\ast}(\eta_{2}\widetilde{\ast}\theta)
\]

\end{proposition}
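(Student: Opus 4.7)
The plan is straightforward: I would unwind both sides of the identity according to the definition of the convolution $\widetilde{\ast}$ and check that they reduce to the very same three-fold composite of maps. This is directly parallel to Proposition \ref{t4.1.2} (associativity of $\widetilde{\ast}$ for pure $\eta$'s), whose proof the author also regards as obvious.

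For the left-hand side, I would first rewrite $\eta_{1}\widetilde{\ast}\eta_{2}:M\otimes\mathcal{W}_{D^{p+q}}\rightarrow M$ as the composite
\[
(M\otimes\mathcal{W}_{D^{p}})\otimes\mathcal{W}_{D^{q}}\overset{\eta_{1}\otimes\mathrm{id}}{\longrightarrow}M\otimes\mathcal{W}_{D^{q}}\overset{\eta_{2}}{\longrightarrow}M
\]
furnished by the definition, and then apply the definition once more with $\theta$ in the place of an $\eta$. Using the canonical identification $M\otimes\mathcal{W}_{D^{p+q+r}}=((M\otimes\mathcal{W}_{D^{p}})\otimes\mathcal{W}_{D^{q}})\otimes\mathcal{W}_{D^{r}}$ coming from the associativity of $\otimes_{\infty}$ in $\mathbf{W}$ together with Weil exponentiability of $M$, and invoking functoriality of $(-)\otimes\mathcal{W}_{D^{r}}$, the left-hand side becomes the three-step composite
\[
((M\otimes\mathcal{W}_{D^{p}})\otimes\mathcal{W}_{D^{q}})\otimes\mathcal{W}_{D^{r}}\overset{(\eta_{1}\otimes\mathrm{id})\otimes\mathrm{id}}{\longrightarrow}(M\otimes\mathcal{W}_{D^{q}})\otimes\mathcal{W}_{D^{r}}\overset{\eta_{2}\otimes\mathrm{id}}{\longrightarrow}M\otimes\mathcal{W}_{D^{r}}\overset{\theta}{\longrightarrow}\mathbb{R}.
\]

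For the right-hand side, I would similarly unfold $\eta_{2}\widetilde{\ast}\theta:M\otimes\mathcal{W}_{D^{q+r}}\rightarrow\mathbb{R}$ as $\theta\circ(\eta_{2}\otimes\mathrm{id})$, and then prepend $\eta_{1}\widetilde{\ast}(-)$, which inserts $\eta_{1}\otimes\mathrm{id}:(M\otimes\mathcal{W}_{D^{p}})\otimes\mathcal{W}_{D^{q+r}}\rightarrow M\otimes\mathcal{W}_{D^{q+r}}$ in front. After re-expanding $\mathcal{W}_{D^{q+r}}=\mathcal{W}_{D^{q}}\otimes_{\infty}\mathcal{W}_{D^{r}}$ and applying functoriality of $(-)\otimes\mathcal{W}_{D^{r}}$, the right-hand side is seen to be the very same three-step composite displayed above.

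The two sides thus coincide, and there is no genuine obstacle. The entire content is the bookkeeping of the canonical isomorphisms $\mathcal{W}_{D^{a+b}}=\mathcal{W}_{D^{a}}\otimes_{\infty}\mathcal{W}_{D^{b}}$ and $M\otimes(\mathcal{W}_{D^{a}}\otimes_{\infty}\mathcal{W}_{D^{b}})=(M\otimes\mathcal{W}_{D^{a}})\otimes\mathcal{W}_{D^{b}}$ supplied by Weil exponentiability, together with the elementary fact that the functor $(-)\otimes\mathcal{W}_{D^{r}}$ preserves composition. That is precisely why the author prefaces the statement with \emph{it should be obvious that}.
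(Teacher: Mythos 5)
Your proof is correct and follows exactly the route the paper intends: the paper offers no proof (it labels the statement as obvious), and your unwinding of both sides into the same three-fold composite via the identifications $M\otimes(\mathcal{W}_{D^{a}}\otimes_{\infty}\mathcal{W}_{D^{b}})=(M\otimes\mathcal{W}_{D^{a}})\otimes\mathcal{W}_{D^{b}}$ and functoriality of $(-)\otimes\mathcal{W}_{D^{r}}$ is precisely the bookkeeping the author leaves to the reader. Nothing is missing.
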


\begin{definition}
We define a binary mapping
\begin{align*}
\widetilde{\circledast}  &  :\left(  \left[  M\otimes\mathcal{W}_{D^{p}%
}\rightarrow M\right]  \otimes\mathcal{W}_{D^{m}}\right)  \times\left(
\left[  M\otimes\mathcal{W}_{D^{q}}\rightarrow\mathbb{R}\right]
\otimes\mathcal{W}_{D^{n}}\right)  \rightarrow\\
&  \left[  M\otimes\mathcal{W}_{D^{p+q}}\rightarrow\mathbb{R}\right]
\otimes\mathcal{W}_{D^{m+n}}%
\end{align*}
to be the composition of mappings
\begin{align*}
&  \left(  \left[  M\otimes\mathcal{W}_{D^{p}}\rightarrow M\right]
\otimes\mathcal{W}_{D^{m}}\right)  \times\left(  \left[  M\otimes
\mathcal{W}_{D^{q}}\rightarrow\mathbb{R}\right]  \otimes\mathcal{W}_{D^{n}%
}\right) \\
&  \underline{\left(  \mathrm{id}_{\left[  M\otimes\mathcal{W}_{D^{p}%
}\rightarrow M\right]  }\otimes\mathcal{W}_{\left(  d_{1},...,d_{m}%
,d_{m+1},...,d_{m+n}\right)  \in D^{m+n}\mapsto\left(  d_{1},...,d_{m}\right)
\in D^{m}}\right)  \times}\\
&  \underrightarrow{\left(  \mathrm{id}_{\left[  M\otimes\mathcal{W}_{D^{q}%
}\rightarrow\mathbb{R}\right]  }\otimes\mathcal{W}_{\left(  d_{1}%
,...,d_{m},d_{m+1},...,d_{m+n}\right)  \in D^{m+n}\mapsto\left(
d_{m+1},...,d_{m+n}\right)  \in D^{n}}\right)  }\\
&  \left(  \left[  M\otimes\mathcal{W}_{D^{p}}\rightarrow M\right]
\otimes\mathcal{W}_{D^{m+n}}\right)  \times\left(  \left[  M\otimes
\mathcal{W}_{D^{q}}\rightarrow\mathbb{R}\right]  \otimes\mathcal{W}_{D^{m+n}%
}\right) \\
&  =\left(  \left[  M\otimes\mathcal{W}_{D^{p}}\rightarrow M\right]
\times\left[  M\otimes\mathcal{W}_{D^{q}}\rightarrow\mathbb{R}\right]
\right)  \otimes\mathcal{W}_{D^{m+n}}\\
&  \underrightarrow{\widetilde{\ast}\otimes\mathrm{id}_{\mathcal{W}_{D^{m+n}}%
}}\left[  M\otimes\mathcal{W}_{D^{p+q}}\rightarrow\mathbb{R}\right]
\otimes\mathcal{W}_{D^{m+n}}%
\end{align*}

\end{definition}

It should be obvious that

\begin{proposition}
\label{t5.1.2}Given $\xi_{1}\in\left[  M\otimes\mathcal{W}_{D^{p}}\rightarrow
M\right]  \otimes\mathcal{W}_{D^{m}}$, $\xi_{2}\in\left[  M\otimes
\mathcal{W}_{D^{q}}\rightarrow M\right]  \otimes\mathcal{W}_{D^{n}}$ and
$\theta\in\left[  M\otimes\mathcal{W}_{D^{r}}\rightarrow\mathbb{R}\right]
\otimes\mathcal{W}_{D^{l}}$, we have
\[
(\xi_{1}\widetilde{\circledast}\xi_{2})\widetilde{\circledast}\mathbb{\theta
}=\xi_{1}\widetilde{\circledast}(\xi_{2}\widetilde{\circledast}\theta)
\]

\end{proposition}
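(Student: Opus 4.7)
The strategy is to imitate the proof of Proposition \ref{t4.1.4}, reducing the associativity of $\widetilde{\circledast}$ to the associativity of $\widetilde{\ast}$ already established in Proposition \ref{t5.1.1}. Both $(\xi_{1}\widetilde{\circledast}\xi_{2})\widetilde{\circledast}\theta$ and $\xi_{1}\widetilde{\circledast}(\xi_{2}\widetilde{\circledast}\theta)$ live in $[M\otimes\mathcal{W}_{D^{p+q+r}}\rightarrow\mathbb{R}]\otimes\mathcal{W}_{D^{m+n+l}}$, and the plan is to show that they arise as the same ``triple convolution'' obtained by first transporting all three factors up to the common ambient Weil algebra $\mathcal{W}_{D^{m+n+l}}$ and then applying $\widetilde{\ast}$ pointwise.

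First, I would rewrite the definition of $\widetilde{\circledast}$ so as to expose its functoriality in the Weil factor. Namely, $\xi_{1}\widetilde{\circledast}\xi_{2}$ is produced by pulling $\xi_{1}$ (originally over $\mathcal{W}_{D^{m}}$) along the canonical projection $\mathcal{W}_{D^{m+n}}\rightarrow\mathcal{W}_{D^{m}}$ and $\xi_{2}$ along $\mathcal{W}_{D^{m+n}}\rightarrow\mathcal{W}_{D^{n}}$, so that both live over the common $\mathcal{W}_{D^{m+n}}$, and then applying $\widetilde{\ast}\otimes\mathrm{id}_{\mathcal{W}_{D^{m+n}}}$. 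Iterating once more, both parenthesizations get expressed as pullbacks of $\xi_{1},\xi_{2},\theta$ to the common factor $\mathcal{W}_{D^{m+n+l}}$ via the three obvious coordinate projections, followed by a pointwise application of $\widetilde{\ast}$.

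Second, with both sides presented in this common pullback-and-convolve form, the associativity reduces to two ingredients: (a) the fact that the two nested projections $\mathcal{W}_{D^{m+n+l}}\rightarrow\mathcal{W}_{D^{m+n}}\rightarrow\mathcal{W}_{D^{m}}$ and $\mathcal{W}_{D^{m+n+l}}\rightarrow\mathcal{W}_{D^{n+l}}\rightarrow\mathcal{W}_{D^{n}}$ agree with the direct coordinate projections (and similarly for the other combinations), and (b) associativity at the $\widetilde{\ast}$ level, which is Proposition \ref{t5.1.1}. Once (a) places both sides in the form $(\text{pullback})\,\widetilde{\ast}\,(\text{pullback})\,\widetilde{\ast}\,(\text{pullback})$ inside $[\,\cdot\,]\otimes\mathcal{W}_{D^{m+n+l}}$, applying $\widetilde{\ast}\otimes\mathrm{id}_{\mathcal{W}_{D^{m+n+l}}}$ together with (b) finishes the argument.

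The main obstacle is purely combinatorial: tracking that the composites of the various Weil coordinate projections coincide regardless of the order in which the two $\widetilde{\circledast}$ operations are performed. There is no conceptual difficulty once one pushes the Weil factors to the outside and delegates the genuine associativity to Proposition \ref{t5.1.1}; the bookkeeping is exactly the same as in the proof of Proposition \ref{t4.1.4}, with the third slot of type $[M\otimes\mathcal{W}_{D^{r}}\rightarrow\mathbb{R}]$ instead of $[M\otimes\mathcal{W}_{D^{r}}\rightarrow M]$, a distinction which plays no role in the argument.
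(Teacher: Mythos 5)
Your argument is correct: the paper itself offers no proof of Proposition \ref{t5.1.2} (it is introduced with ``It should be obvious that''), and your reduction---pull all three factors back to the common Weil factor $\mathcal{W}_{D^{m+n+l}}$, use bifunctoriality of $\otimes$ to commute the pullbacks past $\widetilde{\ast}\otimes\mathrm{id}$, check that the composite coordinate projections agree, and then invoke the associativity of $\widetilde{\ast}$ from Proposition \ref{t5.1.1}---is exactly the routine verification the author is suppressing, identical in structure to the unproved Proposition \ref{t4.1.4}. Nothing is missing.
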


\begin{definition}
For any $\xi\in\left[  M\otimes\mathcal{W}_{D^{p}}\rightarrow M\right]
\otimes\mathcal{W}_{D}$ and any $\theta\in\left[  M\otimes\mathcal{W}_{D^{q}%
}\rightarrow\mathbb{R}\right]  $, we define $\widehat{\mathcal{L}}_{\xi}%
\theta$ to be
\[
\widehat{\mathcal{L}}_{\xi}\theta=\mathbf{D}\left(  \xi\widetilde{\otimes
}\theta\right)
\]

\end{definition}

It is easy to see that

\begin{proposition}
\label{t5.1.3}Given $\xi\in\left[  M\otimes\mathcal{W}_{D^{p}}\rightarrow
M\right]  \otimes\mathcal{W}_{D}$, $\theta_{1}\in\left[  M\otimes
\mathcal{W}_{D^{q}}\rightarrow\mathbb{R}\right]  $ and $\theta_{2}\in\left[
M\otimes\mathcal{W}_{D^{r}}\rightarrow\mathbb{R}\right]  $, we have
\[
\widehat{\mathcal{L}}_{\xi}\left(  \theta_{1}\otimes\theta_{2}\right)
=\left(  \widehat{\mathcal{L}}_{\xi}\theta_{1}\right)  \otimes\theta
_{2}+\left(  \theta_{1}\otimes\left(  \widehat{\mathcal{L}}_{\xi}\theta
_{2}\right)  \right)  ^{\sigma_{p,q}^{+r}}%
\]
where $\sigma_{p,q}^{+r}$\ is
\[
\left(
\begin{array}
[c]{ccccccccc}%
1 & ... & p & p+1 & ... & p+q & p+q+1 & ... & p+q+r\\
q+1 & ... & p+q & 1 & ... & q & p+q+1 & ... & p+q+r
\end{array}
\right)
\]

\end{proposition}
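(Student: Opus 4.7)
The plan is to prove this Leibniz rule by unwinding the definition $\widehat{\mathcal{L}}_\xi = \mathbf{D} \circ (\xi \widetilde{\circledast} \cdot)$ and exploiting that $\mathbf{D}$ is automatically a derivation. The algebraic reason is that in the Weil algebra $\mathcal{W}_D = \mathbb{R}[d]/(d^2)$ multiplication obeys $(a + a'd)(b + b'd) = ab + (ab'+a'b)d$, so the operator extracting the coefficient of $d$ satisfies the ordinary Leibniz rule on $\mathbb{R} \otimes \mathcal{W}_D$; by naturality this passes to $[M \otimes \mathcal{W}_{D^n} \rightarrow \mathbb{R}] \otimes \mathcal{W}_D$ with pointwise multiplication.

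First, I would fix $\gamma \in M \otimes \mathcal{W}_{D^{p+q+r}}$, apply the canonical splitting $\mathcal{W}_{D^{p+q+r}} = \mathcal{W}_{D^p} \otimes_\infty \mathcal{W}_{D^q} \otimes_\infty \mathcal{W}_{D^r}$, and unwind the definitions of $\widetilde{\circledast}$, $\widetilde{\ast}$ and of the tensor product $\theta_1 \otimes \theta_2$ (which by definition is a pointwise product after separately projecting onto the $\mathcal{W}_{D^q}$ and $\mathcal{W}_{D^r}$ factors), obtaining a factorization
\begin{equation*}
\bigl(\xi \widetilde{\circledast}(\theta_1 \otimes \theta_2)\bigr)(\gamma) = u_1(\gamma) \cdot u_2(\gamma) \in \mathbb{R} \otimes \mathcal{W}_D,
\end{equation*}
where $u_1(\gamma)$ is the pairing of $\theta_1$ with the $\xi$-perturbed image of $\gamma$ after zeroing the last $r$ coordinates, and $u_2(\gamma)$ is analogously constructed for $\theta_2$ after zeroing the middle $q$ coordinates.

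Second, applying $\mathbf{D}$ to this product yields $\mathbf{D}(u_1)\,u_2|_{d=0} + u_1|_{d=0}\,\mathbf{D}(u_2)$. The first summand equals $(\widehat{\mathcal{L}}_\xi \theta_1)(\gamma_q) \cdot \theta_2(\gamma_r)$ for the appropriate projections, and coincides directly with $((\widehat{\mathcal{L}}_\xi \theta_1) \otimes \theta_2)(\gamma)$ because the natural ordering in $(\widehat{\mathcal{L}}_\xi \theta_1) \otimes \theta_2$ places $\xi$ on $d_1,\ldots,d_p$, $\theta_1$ on $d_{p+1},\ldots,d_{p+q}$ and $\theta_2$ on $d_{p+q+1},\ldots,d_{p+q+r}$, matching the factorization. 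The second summand evaluates to $\theta_1$ reading coordinates $p+1,\ldots,p+q$ times $\widehat{\mathcal{L}}_\xi\theta_2$ with $\xi$ reading $1,\ldots,p$ and $\theta_2$ reading $p+q+1,\ldots,p+q+r$. Comparing with the natural ordering of $\theta_1 \otimes (\widehat{\mathcal{L}}_\xi\theta_2)$ — which puts $\theta_1$ on the first $q$ indices and $\xi$ on the next $p$ — the realignment is exactly the cyclic shift of the first $p+q$ positions that swaps the $p$-block and the $q$-block, i.e.\ the permutation $\sigma_{p,q}^{+r}$ given in the statement.

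The main obstacle is the combinatorial bookkeeping of indices: tracing carefully how the $p+q+r$ Weil-algebra coordinates are consumed by $\xi$, $\theta_1$ and $\theta_2$ in each summand, checking that $\mathbf{D}$ commutes with evaluation at $\gamma$ so that the product and derivation are compatible, and verifying that the required permutation is truly $\sigma_{p,q}^{+r}$ rather than its inverse. Once the factorization is in place, the Leibniz step itself and the identification of summands are essentially mechanical consequences of the algebra of $\mathcal{W}_D$.
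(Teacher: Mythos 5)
Your proposal is correct, and it fills in exactly the routine verification the paper omits (the proposition is prefaced only by ``It is easy to see that''): factoring $\xi\widetilde{\circledast}\left(\theta_{1}\otimes\theta_{2}\right)$ evaluated at $\gamma$ as a product of two elements of $\mathbb{R}\otimes\mathcal{W}_{D}$ and invoking the Leibniz property of $\mathbf{D}$ on $\mathbb{R}[X]/(X^{2})$ is the intended argument, and your index bookkeeping correctly identifies the realignment of the second summand as $\sigma_{p,q}^{+r}$ (with $\xi$ moved from positions $1,\dots,p$ to $q+1,\dots,q+p$ and the last $r$ positions fixed).
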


It should be obvious that

\begin{proposition}
\label{t5.1.4}If $\xi\in\left[  M\otimes\mathcal{W}_{D^{p}}\rightarrow
M\right]  \otimes\mathcal{W}_{D}$ is a tangent-vector-valued $p$-semiform and
$\theta\in\left[  M\otimes\mathcal{W}_{D^{q}}\rightarrow\mathbb{R}\right]  $
is a $q$-semiform, then $\widehat{\mathcal{L}}_{\xi}\theta$ is a $(p+q)$-semiform.
\end{proposition}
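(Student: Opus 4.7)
The plan is to verify the $1$-homogeneity of $\widehat{\mathcal{L}}_\xi \theta$ in each of its $p+q$ slots, by tracking how the scaling $\alpha\underset{i}{\cdot}$ on $M\otimes\mathcal{W}_{D^{p+q}}$ propagates through $\xi\widetilde{\circledast}\theta$ before the final application of $\mathbf{D}$. The shape of the argument parallels that of Corollary \ref{t4.1.8}, which handled the analogous question for the Lie bracket of semiforms.

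First I would split into two cases according to whether $1\leq i\leq p$ or $p+1\leq i\leq p+q$, using the decomposition $\mathcal{W}_{D^{p+q}}=\mathcal{W}_{D^p}\otimes_\infty\mathcal{W}_{D^q}$ underlying the definition of $\widetilde{\circledast}$. In the first range, $\alpha\underset{i}{\cdot}$ acts only on the $\mathcal{W}_{D^p}$ factor, so it factors through the $\eta\otimes\mathrm{id}_{\mathcal{W}_{D^q}}$ step in the definition of $\widetilde{\ast}$, and condition $2$ on $\xi$ (as a tangent-vector-valued $p$-semiform in the second sense) transfers it to the action $\mathrm{id}\otimes\mathcal{W}_{d\in D\mapsto\alpha d\in D}$ on the outer $\mathcal{W}_D$ tensor factor of $\xi\widetilde{\circledast}\theta$. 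In the second range, the scaling acts only on the $\mathcal{W}_{D^q}$ input of $\theta$, and the $q$-semiform property of $\theta$ converts it into ordinary scalar multiplication by $\alpha$ on the convenient vector space $[M\otimes\mathcal{W}_{D^{p+q}}\rightarrow\mathbb{R}]$.

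To finish, I would apply $\mathbf{D}$ to the two resulting expressions. In the second range, the $\mathbb{R}$-linearity of $\mathbf{D}$ immediately pulls out the scalar $\alpha$. In the first range, the identity $\mathbf{D}\circ(\mathrm{id}\otimes\mathcal{W}_{d\mapsto\alpha d})=\alpha\mathbf{D}$ follows directly from the definition of $\mathbf{D}$ on a Euclidean vector space (Proposition \ref{t2.5.5}): substituting $d\mapsto\alpha d$ merely rescales the linear term $x\mathbf{b}$ in Taylor's expansion (Theorem \ref{t2.4.1}). This is the same remark underlying the end of the proof of Corollary \ref{t4.1.8}. In both cases we obtain $\alpha\widehat{\mathcal{L}}_\xi\theta$, establishing the desired semiform condition.

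The main obstacle is purely notational rather than conceptual: one must commute the precomposition $((\alpha\underset{i}{\cdot})_{M\otimes\mathcal{W}_{D^{p+q}}})^{\ast}$ past the several exponential-law and product-preservation identifications hidden in the definition of $\widetilde{\circledast}$. Once this bookkeeping is carried out in parallel with Corollary \ref{t4.1.8}, the proof reduces to combining the two semiform hypotheses on $\xi$ and $\theta$ with the linearity of $\mathbf{D}$.
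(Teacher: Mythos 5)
Your argument is correct; the paper offers no proof at all here (the proposition is prefaced by ``It should be obvious that''), and your case split on $1\leq i\leq p$ versus $p+1\leq i\leq p+q$ --- pushing the scaling through $\xi\widetilde{\circledast}\theta$ via the two semiform hypotheses and then through $\mathbf{D}$ --- is exactly the verification the paper implicitly has in mind, in parallel with Proposition \ref{t4.1.7} and Corollary \ref{t4.1.8}. Nothing essential is missing.
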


\begin{remark}
Therefore, given a tangent-vector-valued $p$-semiform $\xi$\ on $M$,
$\widehat{\mathcal{L}}_{\xi}$ is considered to be a graded mapping of degree
$p$\ on the space $\widetilde{\Omega}\left(  M\right)  $.
\end{remark}

\begin{proposition}
\label{t5.1.5}If $\xi,\xi_{1},\xi_{2}\in\left[  M\otimes\mathcal{W}_{D^{p}%
}\rightarrow M\right]  \otimes\mathcal{W}_{D}$ are tangent-vector-valued
$p$-semiforms, $\theta,\theta_{1},\theta_{2}\in\left[  M\otimes\mathcal{W}%
_{D^{q}}\rightarrow\mathbb{R}\right]  $ are $q$-semiforms and $\alpha
\in\mathbb{R}$, then we have the following:

\begin{enumerate}
\item
\[
\widehat{\mathcal{L}}_{\xi_{1}+\xi_{2}}\theta=\widehat{\mathcal{L}}_{\xi_{1}%
}\theta+\widehat{\mathcal{L}}_{\xi_{2}}\theta
\]

\item
\[
\widehat{\mathcal{L}}_{\alpha\xi}\theta=\alpha\left(  \widehat{\mathcal{L}%
}_{\xi}\theta\right)
\]

\item
\[
\widehat{\mathcal{L}}_{\xi}\left(  \theta_{1}+\theta_{2}\right)
=\widehat{\mathcal{L}}_{\xi}\theta_{1}+\widehat{\mathcal{L}}_{\xi}\theta_{2}%
\]

\item
\[
\widehat{\mathcal{L}}_{\xi}\left(  \alpha\theta\right)  =\alpha\left(
\widehat{\mathcal{L}}_{\xi}\theta\right)
\]

\end{enumerate}
\end{proposition}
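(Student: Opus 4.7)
The plan is to factor $\widehat{\mathcal{L}}_{\xi}\theta=\mathbf{D}(\xi\,\widetilde{\circledast}\,\theta)$ into two constituent operations and establish linearity of each separately. The target space $[M\otimes\mathcal{W}_{D^{p+q}}\to\mathbb{R}]$ is a convenient vector space, hence by Propositions \ref{t2.5.3} and \ref{t2.5.5} it is a Euclidean vector space which is microlinear. Consequently the operator $\mathbf{D}\colon\mathbb{E}\otimes\mathcal{W}_{D}\to\mathbb{E}$ is $\mathbb{R}$-linear, because it is the second coordinate of the inverse of the canonical $\mathbb{R}$-linear bijection $\mathbf{i}_{\mathbb{E}}^{1}$. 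Once I have linearity of $\mathbf{D}$ and bilinearity of $\widetilde{\circledast}$ in the appropriate sense, all four equations fall out by routine composition.

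For the $\theta$-argument (statements 3 and 4), the map $\theta\mapsto\xi\,\widetilde{\circledast}\,\theta$ unwinds, at the foot-point level, to the composition $\theta\mapsto\theta\circ(\eta\otimes\mathrm{id}_{\mathcal{W}_{D^{q}}})$, which is evidently $\mathbb{R}$-linear on the space of $\mathbb{R}$-valued smooth maps; tensoring with $\mathcal{W}_{D^{m}}$ preserves this linearity since $\cdot\otimes\mathcal{W}_{D^{m}}$ is a functor on $\mathbf{FS}$ that preserves the $\mathbb{R}$-vector-space operations on the fibers. Passing through the linear $\mathbf{D}$ then yields (3) and (4).

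For the $\xi$-argument (statements 1 and 2), scalar multiplication $\alpha\xi$ is realised by applying $\mathrm{id}_{[M\otimes\mathcal{W}_{D^{p}}\to M]}\otimes\mathcal{W}_{d\mapsto\alpha d}$, whereas the sum $\xi_{1}+\xi_{2}$ of two tangent vectors at $\delta_{M}^{p}$ is produced by gluing them into a single element of $[M\otimes\mathcal{W}_{D^{p}}\to M]\otimes\mathcal{W}_{D(2)}$ and then pulling back along the diagonal $d\in D\mapsto(d,d)\in D(2)$. Since the diagram defining $\widetilde{\circledast}$ is natural in its first slot, both of these operations commute with $\cdot\,\widetilde{\circledast}\,\theta$, so that $(\alpha\xi)\widetilde{\circledast}\theta=\alpha(\xi\,\widetilde{\circledast}\,\theta)$ and $(\xi_{1}+\xi_{2})\widetilde{\circledast}\theta=\xi_{1}\,\widetilde{\circledast}\,\theta+\xi_{2}\,\widetilde{\circledast}\,\theta$ inside the Euclidean space $[M\otimes\mathcal{W}_{D^{p+q}}\to\mathbb{R}]\otimes\mathcal{W}_{D}$. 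Applying the linear $\mathbf{D}$ finishes (1) and (2).

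The only subtle point is the additivity argument in the $\xi$-slot: one has to unfold the $D(2)$-definition of tangent addition and check carefully that it is respected by the convolution $\widetilde{\ast}$. This is really the same functoriality-of-Weil-algebras maneuver already used in the proof of Proposition \ref{t4.1.12} (via Proposition 5 in \S3.4 of Lavendhomme \cite{lav}), and I would invoke it verbatim rather than re-derive it. No genuinely new ideas are required.
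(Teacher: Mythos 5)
Your proposal is correct, and for statements 2 and 4 it is essentially what the paper means by ``follow from the definitions'': $\widehat{\mathcal{L}}_{\xi}\theta=\mathbf{D}(\xi\,\widetilde{\circledast}\,\theta)$, the operator $\mathbf{D}$ is $\mathbb{R}$-linear because it is a component of the inverse of the linear bijection $\mathbf{i}_{\mathbb{E}}^{1}$ on the Euclidean space $\left[M\otimes\mathcal{W}_{D^{p+q}}\rightarrow\mathbb{R}\right]$ (Propositions \ref{t2.5.3} and \ref{t2.5.5}), and scalar multiplication in either slot is carried through the convolution by functoriality of $\cdot\otimes\mathcal{W}$. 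Where you genuinely diverge from the paper is in the additive statements 1 and 3: you prove them directly, by unfolding the $D(2)$-gluing definition of tangent-vector addition and checking that $\widetilde{\circledast}$ and $\mathbf{D}$ respect it. The paper instead disposes of them in one line by deducing additivity from homogeneity --- the standard microlinearity principle (the linearity-equals-homogeneity lemma for Euclidean $\mathbb{R}$-modules in Lavendhomme \cite{lav}, the same device used for Proposition \ref{t4.1.12}, where there too the additive clauses are derived from the scalar ones rather than the other way around, as your last paragraph slightly misremembers). Your route costs a more careful naturality check on the $D(2)$ level but is self-contained; the paper's route is shorter but leans on an external lemma. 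Both are sound, and your decomposition into the two constituent operations is a perfectly good way to organize the verification.
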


\begin{proof}
The statements 2 and 4 follow from the definitions. The statement 1 follows
from the statement 2, while the statement 3 follows from the statement 4.
\end{proof}

\begin{theorem}
\label{t5.1.6}If $\xi_{1}\in\left[  M\otimes\mathcal{W}_{D^{p}}\rightarrow
M\right]  \otimes\mathcal{W}_{D}$ is a tangent-vector-valued $p$-semiform and
$\xi_{2}\in\left[  M\otimes\mathcal{W}_{D^{q}}\rightarrow M\right]
\otimes\mathcal{W}_{D}$ is a tangent-vector-valued $q$-semiform, then we have
\[
\widehat{\mathcal{L}}_{\left[  \xi_{1},\xi_{2}\right]  _{L}}=\left[
\widehat{\mathcal{L}}_{\xi_{1}},\widehat{\mathcal{L}}_{\xi_{2}}\right]
=\widehat{\mathcal{L}}_{\xi_{1}}\circ\widehat{\mathcal{L}}_{\xi_{2}}%
-(-1)^{pq}\widehat{\mathcal{L}}_{\xi_{2}}\circ\widehat{\mathcal{L}}_{\xi_{1}}%
\]

\end{theorem}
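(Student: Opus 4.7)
The plan is to evaluate both sides on an arbitrary semiform $\theta\in\bigl[M\otimes\mathcal{W}_{D^{s}}\to\mathbb{R}\bigr]$. Unfolding the definition, $\widehat{\mathcal{L}}_{\left[\xi_{1},\xi_{2}\right]_{L}}\theta=\mathbf{D}\bigl(\left[\xi_{1},\xi_{2}\right]_{L}\widetilde{\circledast}\theta\bigr)$, and I would substitute $\left[\xi_{1},\xi_{2}\right]_{L}=\xi_{1}\widetilde{\circledast}\xi_{2}\overset{\cdot}{-}\xi_{1}\circledast\xi_{2}$. All manipulations take place in the Euclidean, microlinear convenient vector space $\bigl[M\otimes\mathcal{W}_{D^{p+q+s}}\to\mathbb{R}\bigr]\otimes\mathcal{W}_{D^{2}}$, whose relevant properties are supplied by Proposition~\ref{t2.5.5}.

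First I would verify that the operation $(\cdot)\widetilde{\circledast}\theta$ distributes over the strong difference $\overset{\cdot}{-}$; this is the direct analogue of Lemma~\ref{t4.1.11} with an $\mathbb{R}$-valued second argument, proved the same way from the bilinearity of $\widetilde{\ast}$. Proposition~\ref{t2.4.3} then converts $\mathbf{D}$ of the strong difference into $\mathbf{D}\mathbf{D}_{2}\bigl((\xi_{1}\widetilde{\circledast}\xi_{2})\widetilde{\circledast}\theta\bigr)-\mathbf{D}\mathbf{D}_{2}\bigl((\xi_{1}\circledast\xi_{2})\widetilde{\circledast}\theta\bigr)$. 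For the first summand, associativity of $\widetilde{\circledast}$ (Proposition~\ref{t5.1.2}) gives $(\xi_{1}\widetilde{\circledast}\xi_{2})\widetilde{\circledast}\theta=\xi_{1}\widetilde{\circledast}(\xi_{2}\widetilde{\circledast}\theta)$; since the two tangent directions in $\mathcal{W}_{D^{2}}$ are contributed by $\xi_{1}$ (outer) and $\xi_{2}$ (inner, through $\xi_{2}\widetilde{\circledast}\theta$), $\mathbf{D}_{2}$ strips off the $\xi_{2}$-direction to yield $\xi_{1}\widetilde{\circledast}\widehat{\mathcal{L}}_{\xi_{2}}\theta$, and a final $\mathbf{D}$ produces $\widehat{\mathcal{L}}_{\xi_{1}}\widehat{\mathcal{L}}_{\xi_{2}}\theta$. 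For the second summand I would lift Proposition~\ref{t4.1.1} to icons to write $\xi_{1}\circledast\xi_{2}=(\xi_{2}\widetilde{\circledast}\xi_{1})^{\sigma_{p,q}}$, commute the spatial permutation past $\widetilde{\circledast}\theta$ (which only appends $s$ passive coordinates) and past $\mathbf{D},\mathbf{D}_{2}$ (which act only on the tangent factor $\mathcal{W}_{D^{2}}$), and apply associativity once more, concluding that $\mathbf{D}\mathbf{D}_{2}$ of that summand equals $\widehat{\mathcal{L}}_{\xi_{2}}\widehat{\mathcal{L}}_{\xi_{1}}\theta$ transported by $\sigma_{p,q}^{+s}$.

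The main obstacle is identifying this transported term with $(-1)^{pq}\widehat{\mathcal{L}}_{\xi_{2}}\widehat{\mathcal{L}}_{\xi_{1}}\theta$. The sign itself is explained by $\varepsilon_{\sigma_{p,q}}=(-1)^{pq}$, since the block-swap $\sigma_{p,q}$ decomposes into $pq$ transpositions; but promoting the permutation superscript $(\cdot)^{\sigma_{p,q}^{+s}}$ to an outright scalar is where the convention of the graded commutator used in this paper enters, parallel to the way Proposition~\ref{t5.1.3} keeps the superscript explicit on the right-hand side. Once that interpretation is granted, the two summands assemble into $\widehat{\mathcal{L}}_{\xi_{1}}\circ\widehat{\mathcal{L}}_{\xi_{2}}-(-1)^{pq}\widehat{\mathcal{L}}_{\xi_{2}}\circ\widehat{\mathcal{L}}_{\xi_{1}}$ evaluated at $\theta$, as required; the remaining bookkeeping — distributivity of $\widetilde{\circledast}$ over $\overset{\cdot}{-}$ and the commutation of spatial permutations past $\mathbf{D}$ and $\mathbf{D}_{i}$ — is routine manipulation with Weil prolongations of the sort already used repeatedly earlier in the paper.
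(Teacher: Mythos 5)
Your proposal follows essentially the same route as the paper's own proof: unfold $\widehat{\mathcal{L}}_{\left[\xi_{1},\xi_{2}\right]_{L}}\theta$ as $\mathbf{D}$ of a strong difference, distribute $\widetilde{\circledast}\,\theta$ over $\overset{\cdot}{-}$, convert via Proposition~\ref{t2.4.3} into a difference of iterated derivatives $\mathbf{D}\mathbf{D}_{2}$, use associativity (Proposition~\ref{t5.1.2}) on the first summand, and identify the second summand through the $\sigma_{p,q}$-swap of Proposition~\ref{t4.1.1} with $(-1)^{pq}\widehat{\mathcal{L}}_{\xi_{2}}\widehat{\mathcal{L}}_{\xi_{1}}\theta$. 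The only differences are cosmetic: the paper's icon-level version of the swap also interchanges the two tangent directions of $\mathcal{W}_{D^{2}}$ (turning $\mathbf{D}_{2}$ into $\mathbf{D}_{1}$, which is harmless by Proposition~\ref{t2.4.2}), and the paper passes from the permutation superscript $\sigma_{p,q}^{+r}$ to the scalar $(-1)^{pq}$ exactly as tacitly as you do.
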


\begin{proof}
If $\theta\in\left[  M\otimes\mathcal{W}_{D^{r}}\rightarrow\mathbb{R}\right]
$ is a $r$-semiform, then we have
\begin{align*}
&  \widehat{\mathcal{L}}_{\left[  \xi_{1},\xi_{2}\right]  _{L}}\theta\\
&  =\mathbf{D}\left(  \left[  \xi_{1},\xi_{2}\right]  _{L}\widetilde{\otimes
}\mathbb{\theta}\right) \\
&  =\mathbf{D}\left(  \left(  \xi_{1}\widetilde{\otimes}\xi_{2}\overset{\cdot
}{-}\xi_{1}\otimes\xi_{2}\right)  \widetilde{\circledast}\mathbb{\theta
}\right) \\
&  =\mathbf{D}\left(  \left(  \xi_{1}\widetilde{\otimes}\xi_{2}\right)
\widetilde{\otimes}\mathbb{\theta}\overset{\cdot}{-}\left(  \xi_{1}\otimes
\xi_{2}\right)  \widetilde{\otimes}\mathbb{\theta}\right) \\
&  =\mathbf{D}\left(  \left(  \xi_{1}\widetilde{\otimes}\xi_{2}\right)
\widetilde{\otimes}\mathbb{\theta}\overset{\cdot}{-}\left(  \xi_{2}%
\widetilde{\otimes}\xi_{1}\right)  _{\tau}^{\sigma_{p,q}}\widetilde{\otimes
}\mathbb{\theta}\right) \\
&  =\mathbf{D}\left(
\begin{array}
[c]{c}%
\xi_{1}\widetilde{\otimes}\left(  \xi_{2}\widetilde{\otimes}\mathbb{\theta
}\right)  \overset{\cdot}{-}\\
\left(  \mathrm{id}_{\widetilde{\Omega}^{p+q+r}\left(  M\right)  }%
\otimes\mathcal{W}_{\left(  d_{1},d_{2}\right)  \in D^{2}\mapsto\left(
d_{2},d_{1}\right)  \in D^{2}}\right)  \left(  \left(  \xi_{2}\widetilde
{\otimes}\left(  \xi_{1}\widetilde{\otimes}\mathbb{\theta}\right)  \right)
^{\sigma_{p,q}^{+r}}\right)
\end{array}
\right) \\
&  =\mathbf{D}\left(  \mathbf{D}_{2}\left(  \xi_{1}\widetilde{\otimes}\left(
\xi_{2}\widetilde{\otimes}\mathbb{\theta}\right)  \right)  \right)
-(-1)^{pq}\mathbf{D}\left(  \mathbf{D}_{1}\left(  \xi_{2}\widetilde{\otimes
}\left(  \xi_{1}\widetilde{\otimes}\mathbb{\theta}\right)  \right)  \right) \\
&  \text{[By Proposition \ref{t2.4.3}]}\\
&  =\mathbf{D}\left(  \xi_{1}\widetilde{\otimes}\mathbf{D}\left(  \xi
_{2}\widetilde{\otimes}\mathbb{\theta}\right)  \right)  -(-1)^{pq}%
\mathbf{D}\left(  \xi_{2}\widetilde{\otimes}\mathbf{D}\left(  \xi
_{1}\widetilde{\otimes}\mathbb{\theta}\right)  \right) \\
&  =\widehat{\mathcal{L}}_{\xi_{1}}\left(  \widehat{\mathcal{L}}_{\xi_{2}%
}\theta\right)  -(-1)^{pq}\widehat{\mathcal{L}}_{\xi_{2}}\left(
\widehat{\mathcal{L}}_{\xi_{1}}\theta\right)
\end{align*}

\end{proof}

\subsection{The Lie Derivation of the Second Type}

\begin{definition}
For any $\xi\in\left[  M\otimes\mathcal{W}_{D^{p}}\rightarrow M\right]
\otimes\mathcal{W}_{D}$ and any $\theta\in\left[  M\otimes\mathcal{W}_{D^{q}%
}\rightarrow\mathbb{R}\right]  $, we define $\mathcal{L}_{\xi}\theta$ to be
\[
\mathcal{L}_{\xi}\theta=\mathcal{A}_{p,q}\left(  \widehat{\mathcal{L}}_{\xi
}\theta\right)
\]

\end{definition}

It should be obvious that

\begin{proposition}
\label{t5.2.1}If $\xi\in\left[  M\otimes\mathcal{W}_{D^{p}}\rightarrow
M\right]  \otimes\mathcal{W}_{D}$ is a tangent-vector-valued $p$-form and
$\theta\in\left[  M\otimes\mathcal{W}_{D^{q}}\rightarrow\mathbb{R}\right]  $
is a $q$-form, then $\mathcal{L}_{\xi}\theta$ is a $(p+q)$-form.
\end{proposition}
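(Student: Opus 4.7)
The plan is to deduce this from Proposition \ref{t5.1.4} together with the observation that antisymmetrization turns any semiform into a form, which is the tangent-vector-valued analogue of Proposition \ref{t2.5.2}. First, since $\xi$ is in particular a tangent-vector-valued $p$-semiform and $\theta$ is in particular a $q$-semiform, Proposition \ref{t5.1.4} guarantees that $\widehat{\mathcal{L}}_{\xi}\theta$ is already a $(p+q)$-semiform. Thus the $p+q$ homogeneity conditions are handled for free; the only substantive thing left to check is the alternating property, which is exactly what $\mathcal{A}_{p,q} = (1/p!q!)\mathcal{A}$ is designed to produce.

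Next I would verify that the semiform property survives under the precomposition action $(\,)^{\sigma}$ by any $\sigma \in \mathbb{S}_{p+q}$. This is a routine diagram chase: the permutation action simply relabels the infinitesimal slots via $\mathrm{id}_{M}\otimes\mathcal{W}_{\sigma_{D^{p+q}}}$, so the scalar multiplication $\alpha\underset{i}{\cdot}(-)$ in slot $i$ of the original semiform becomes scalar multiplication in slot $\sigma^{-1}(i)$ after the action, and homogeneity is preserved. Consequently each summand $\varepsilon_{\sigma}(\widehat{\mathcal{L}}_{\xi}\theta)^{\sigma}$ is a $(p+q)$-semiform, and the (finite) sum $\mathcal{A}_{p,q}(\widehat{\mathcal{L}}_{\xi}\theta) = \mathcal{L}_{\xi}\theta$ is therefore also a $(p+q)$-semiform.

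Finally, to verify the alternating property, I would show by a standard reindexing that $(\mathcal{A}\psi)^{\tau} = \varepsilon_{\tau}\,\mathcal{A}\psi$ for any $\tau \in \mathbb{S}_{p+q}$ and any $\psi \in [M\otimes\mathcal{W}_{D^{p+q}}\rightarrow \mathbb{R}]$. Indeed, setting $\sigma' = \sigma\tau$,
\[
(\mathcal{A}\psi)^{\tau}=\sum_{\sigma\in\mathbb{S}_{p+q}}\varepsilon_{\sigma}\psi^{\sigma\tau}=\varepsilon_{\tau}\sum_{\sigma'\in\mathbb{S}_{p+q}}\varepsilon_{\sigma'}\psi^{\sigma'}=\varepsilon_{\tau}\mathcal{A}\psi,
\]
so dividing by $p!q!$ yields $(\mathcal{L}_{\xi}\theta)^{\tau}=\varepsilon_{\tau}\mathcal{L}_{\xi}\theta$, which is the alternating property. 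Combined with the homogeneity from the previous paragraph, this establishes that $\mathcal{L}_{\xi}\theta$ is a genuine $(p+q)$-form.

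There is no real obstacle here, since the statement really is a formal bookkeeping consequence of (i) Proposition \ref{t5.1.4}, which hands us the semiform property, and (ii) the general principle that $\mathcal{A}$ kills any asymmetry. The only point that requires a bit of care is confirming that the homogeneity conditions, which are indexed by the slots $1,\dots,p+q$ of the infinitesimal object $D^{p+q}$, are permuted coherently under the $\mathbb{S}_{p+q}$-action $(\,)^{\sigma}$; but this is exactly the same calculation that justifies the analogous Proposition \ref{t2.5.2} for ordinary differential forms, and so can be safely relegated to the same "easy to see" status.
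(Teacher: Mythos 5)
Your argument is correct and is exactly the route the paper intends: the paper states this proposition with no proof (``It should be obvious that''), and the obvious argument is precisely yours --- Proposition \ref{t5.1.4} supplies the $(p+q)$-homogeneity, while the reindexing identity $(\mathcal{A}\psi)^{\tau}=\varepsilon_{\tau}\mathcal{A}\psi$ (the content of the already-stated Proposition \ref{t2.5.2}) supplies the alternating property. No gap; if anything, your check that homogeneity is preserved under the $\mathbb{S}_{p+q}$-action is a welcome bit of extra care.
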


\begin{proposition}
\label{t5.2.2}Given $\xi\in\left[  M\otimes\mathcal{W}_{D^{p}}\rightarrow
M\right]  \otimes\mathcal{W}_{D}$, $\theta_{1}\in\left[  M\otimes
\mathcal{W}_{D^{q}}\rightarrow\mathbb{R}\right]  $ and $\theta_{2}\in\left[
M\otimes\mathcal{W}_{D^{r}}\rightarrow\mathbb{R}\right]  $, we have the following:

\begin{enumerate}
\item
\[
\mathcal{A}_{p,q,r}\left(  \left(  \widehat{\mathcal{L}}_{\xi}\theta
_{1}\right)  \otimes\theta_{2}\right)  =\mathcal{A}_{p+q,r}\left(
\mathcal{A}_{p,q}\left(  \widehat{\mathcal{L}}_{\xi}\theta_{1}\right)
\otimes\theta_{2}\right)
\]

\item
\[
\mathcal{A}_{p,q,r}\left(  \theta_{1}\otimes\left(  \widehat{\mathcal{L}}%
_{\xi}\theta_{2}\right)  \right)  =\mathcal{A}_{q,p+r}\left(  \theta
_{1}\otimes\mathcal{A}_{p,r}\left(  \widehat{\mathcal{L}}_{\xi}\theta
_{2}\right)  \right)
\]

\end{enumerate}
\end{proposition}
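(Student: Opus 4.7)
The plan is to reduce both identities to the standard alternator combinatorics that already underlies Proposition \ref{t2.5.2'}. The one elementary fact I need is this: if $\eta \in [M \otimes \mathcal{W}_{D^{k}} \rightarrow \mathbb{R}]$ has degree $k$ and $\theta$ has degree $\ell$, and if $\sigma \in \mathbb{S}_{k}$ is extended to $\tilde\sigma \in \mathbb{S}_{k+\ell}$ that fixes the last $\ell$ coordinates, then $\eta^{\sigma} \otimes \theta = (\eta \otimes \theta)^{\tilde\sigma}$ with $\varepsilon_{\tilde\sigma} = \varepsilon_{\sigma}$, so reindexing the defining sum of $\mathcal{A}$ via $\rho = \tau\tilde\sigma$ yields $\mathcal{A}(\eta^{\sigma} \otimes \theta) = \varepsilon_{\sigma}\,\mathcal{A}(\eta \otimes \theta)$. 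The mirror statement, with $\sigma$ acting on the second tensor slot by an extension fixing the first $k$ coordinates, is proved identically.

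For part 1, set $\eta = \widehat{\mathcal{L}}_{\xi}\theta_{1}$, which is a $(p+q)$-semiform by Proposition \ref{t5.1.4}. Expanding both alternators, the right-hand side becomes $\frac{1}{(p+q)!\,r!\,p!\,q!}\sum_{\sigma \in \mathbb{S}_{p+q}} \varepsilon_{\sigma}\,\mathcal{A}(\eta^{\sigma} \otimes \theta_{2})$. Substituting $\mathcal{A}(\eta^{\sigma} \otimes \theta_{2}) = \varepsilon_{\sigma}\,\mathcal{A}(\eta \otimes \theta_{2})$ by the observation above collapses the $\sigma$-sum to $(p+q)!\,\mathcal{A}(\eta \otimes \theta_{2})$, cancelling the $(p+q)!$ in the denominator and leaving $\frac{1}{p!\,q!\,r!}\mathcal{A}(\eta \otimes \theta_{2}) = \mathcal{A}_{p,q,r}(\eta \otimes \theta_{2})$, which is the left-hand side.

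Part 2 is the same argument but with the inner alternator supported on the second tensor slot: take $\eta = \widehat{\mathcal{L}}_{\xi}\theta_{2}$ (a $(p+r)$-semiform by Proposition \ref{t5.1.4}) and extend each $\sigma \in \mathbb{S}_{p+r}$ to an element of $\mathbb{S}_{q+p+r}$ that fixes the first $q$ indices. The inner $\mathcal{A}_{p,r}$ then gets absorbed into the outer $\mathcal{A}_{q,p+r}$ in the same way, with $(p+r)!$ cancelling the $(p+r)!$ in the outer denominator to produce $\mathcal{A}_{p,q,r}(\theta_{1} \otimes \eta)$ on the right-hand side.

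The only delicate step is the bookkeeping of where each inner permutation is supported and the verification that its sign character extends intact to the ambient symmetric group. Aside from this, the proof is entirely formal; indeed both identities are direct specializations of the combinatorics already verified in the proof of Proposition \ref{t2.5.2'}, with one of the three tensor factors there replaced by the single semiform $\widehat{\mathcal{L}}_{\xi}\theta_{i}$, so one could alternatively simply cite Proposition \ref{t2.5.2'} after noting that $\widehat{\mathcal{L}}_{\xi}\theta_{i}$ plays the role of a two-factor block.
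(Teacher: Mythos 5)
Your proof is correct and follows exactly the route the paper intends: the paper's own proof is the one-line remark ``by the same token as that in establishing the familiar associativity of wedge products in differential forms,'' and your argument is precisely that token written out, namely the reindexing identity $\mathcal{A}(\eta^{\sigma}\otimes\theta)=\varepsilon_{\sigma}\mathcal{A}(\eta\otimes\theta)$ collapsing the inner alternator and cancelling the $(p+q)!$ (resp. $(p+r)!$). Nothing further is needed.
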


\begin{proof}
By the same token as that in establishing the familiar associativity of wedge
products in differential forms.
\end{proof}

\begin{proposition}
\label{t5.2.3}Given $\xi\in\left[  M\otimes\mathcal{W}_{D^{p}}\rightarrow
M\right]  \otimes\mathcal{W}_{D}$, $\theta_{1}\in\left[  M\otimes
\mathcal{W}_{D^{q}}\rightarrow\mathbb{R}\right]  $ and $\theta_{2}\in\left[
M\otimes\mathcal{W}_{D^{r}}\rightarrow\mathbb{R}\right]  $, we have
\[
\mathcal{L}_{\xi}\left(  \theta_{1}\wedge\theta_{2}\right)  =\left(
\mathcal{L}_{\xi}\theta_{1}\right)  \wedge\theta_{2}+(-1)^{pq}\theta_{1}%
\wedge\left(  \mathcal{L}_{\xi}\theta_{2}\right)
\]

\end{proposition}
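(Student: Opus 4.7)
The plan is to unfold the left-hand side via the definition $\mathcal{L}_\xi \theta = \mathcal{A}_{p,q}\widehat{\mathcal{L}}_\xi \theta$, apply the Leibniz-type identity of Proposition \ref{t5.1.3} to $\widehat{\mathcal{L}}_\xi(\theta_1 \otimes \theta_2)$, and then recognize the two resulting pieces as the wedge-summands on the right-hand side by means of Proposition \ref{t5.2.2}.

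First I would start from
\[ \mathcal{L}_\xi(\theta_1 \wedge \theta_2) = \mathcal{A}_{p,q+r}\bigl(\widehat{\mathcal{L}}_\xi(\mathcal{A}_{q,r}(\theta_1 \otimes \theta_2))\bigr). \]
By linearity of $\widehat{\mathcal{L}}_\xi$ (Proposition \ref{t5.1.5}) and the fact that, in view of its definition through the convolution $\widetilde{\circledast}$, the operator $\widehat{\mathcal{L}}_\xi$ is equivariant under any permutation of $\theta$'s arguments (the latter translates directly into the corresponding permutation of the last $q+r$ of the $p+q+r$ arguments of $\xi\widetilde{\circledast}\theta$), the inner $\mathcal{A}_{q,r}$ can be commuted past $\widehat{\mathcal{L}}_\xi$ as a partial antisymmetrization on the last $q+r$ slots. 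A standard cosetwise reindexing over $\mathbb{S}_{p+q+r}$, in the same spirit as the proof of Proposition \ref{t2.5.2'}, then collapses this nested antisymmetrization into a single one, yielding
\[ \mathcal{L}_\xi(\theta_1 \wedge \theta_2) = \mathcal{A}_{p,q,r}\bigl(\widehat{\mathcal{L}}_\xi(\theta_1 \otimes \theta_2)\bigr). \]

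Next, Proposition \ref{t5.1.3} splits the integrand into
\[ \widehat{\mathcal{L}}_\xi(\theta_1 \otimes \theta_2) = (\widehat{\mathcal{L}}_\xi \theta_1) \otimes \theta_2 + \bigl(\theta_1 \otimes (\widehat{\mathcal{L}}_\xi \theta_2)\bigr)^{\sigma_{p,q}^{+r}}. \]
Applying $\mathcal{A}_{p,q,r}$ to the first summand and invoking Proposition \ref{t5.2.2}(1) yields $\mathcal{A}_{p+q,r}((\mathcal{L}_\xi \theta_1) \otimes \theta_2) = (\mathcal{L}_\xi \theta_1) \wedge \theta_2$. For the second summand I would use the general identity $\mathcal{A}_{p,q,r}(\eta^\tau) = \varepsilon_\tau\, \mathcal{A}_{p,q,r}(\eta)$ together with the sign calculation $\varepsilon_{\sigma_{p,q}^{+r}} = \varepsilon_{\sigma_{p,q}} = (-1)^{pq}$ from the proof of Proposition \ref{t4.2.2}, followed by Proposition \ref{t5.2.2}(2), to convert it to $(-1)^{pq}\,\theta_1 \wedge (\mathcal{L}_\xi \theta_2)$. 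Summing the two contributions gives the claim.

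The main obstacle will be the first step: rigorously justifying both the equivariance of $\widehat{\mathcal{L}}_\xi$ under permutations fixing the first $p$ slots and the collapse of the nested alternation $\mathcal{A}_{p,q+r} \circ \mathcal{A}_{q,r}^{(\text{last})}$ into $\mathcal{A}_{p,q,r}$. Both amount to careful bookkeeping with the permutation conventions of the notation preceding Proposition \ref{t4.1.9}; once they are in place, the rest of the argument reduces to the routine sign and associativity manipulations already exemplified in the proof of Proposition \ref{t4.2.2}.
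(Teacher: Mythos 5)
Your proposal follows essentially the same route as the paper's own proof: unfold $\mathcal{L}_{\xi}(\theta_{1}\wedge\theta_{2})$ to $\mathcal{A}_{p,q,r}\bigl(\widehat{\mathcal{L}}_{\xi}(\theta_{1}\otimes\theta_{2})\bigr)$, split via Proposition \ref{t5.1.3}, and identify the two pieces through Proposition \ref{t5.2.2} together with the sign $\varepsilon_{\sigma_{p,q}^{+r}}=(-1)^{pq}$. The only difference is that you spell out the coset-reindexing justification for collapsing the nested antisymmetrizations, a step the paper states without comment; your argument is correct.
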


\begin{proof}
We proceed as follows:
\begin{align*}
&  \mathcal{L}_{\xi}\left(  \theta_{1}\wedge\theta_{2}\right) \\
&  =\mathcal{A}_{p,q+r}\left(  \widehat{\mathcal{L}}_{\xi}\left(
\mathcal{A}_{q,r}\left(  \theta_{1}\otimes\theta_{2}\right)  \right)  \right)
\\
&  =\mathcal{A}_{p,q,r}\left(  \widehat{\mathcal{L}}_{\xi}\left(  \theta
_{1}\otimes\theta_{2}\right)  \right) \\
&  =\mathcal{A}_{p,q,r}\left(  \left(  \widehat{\mathcal{L}}_{\xi}\theta
_{1}\right)  \otimes\theta_{2}+\left(  \theta_{1}\otimes\left(  \widehat
{\mathcal{L}}_{\xi}\theta_{2}\right)  \right)  ^{\sigma_{p,q}^{+r}}\right) \\
&  \text{[By Proposition \ref{t5.1.3}]}\\
&  =\mathcal{A}_{p,q,r}\left(  \left(  \widehat{\mathcal{L}}_{\xi}\theta
_{1}\right)  \otimes\theta_{2}\right)  +\mathcal{A}_{p,q,r}\left(  \left(
\theta_{1}\otimes\left(  \widehat{\mathcal{L}}_{\xi}\theta_{2}\right)
\right)  ^{\sigma_{p,q}^{+r}}\right) \\
&  =\mathcal{A}_{p+q,r}\left(  \mathcal{A}_{p,q}\left(  \widehat{\mathcal{L}%
}_{\xi}\theta_{1}\right)  \otimes\theta_{2}\right)  +(-1)^{pq}\mathcal{A}%
_{q,p+r}\left(  \theta_{1}\otimes\mathcal{A}_{p,r}\left(  \widehat
{\mathcal{L}}_{\xi}\theta_{2}\right)  \right) \\
&  \text{[By Proposition \ref{t5.2.2}]}\\
&  =\left(  \mathcal{L}_{\xi}\theta_{1}\right)  \wedge\theta_{2}%
+(-1)^{pq}\theta_{1}\wedge\left(  \mathcal{L}_{\xi}\theta_{2}\right)
\end{align*}

\end{proof}

\begin{remark}
Therefore, given a tangent-vector-valued $p$-form $\xi$\ on $M$,
$\mathcal{L}_{\xi}$ is considered to be a graded mapping of degree $p$\ on the
space $\Omega\left(  M\right)  $.
\end{remark}

\begin{proposition}
\label{t5.2.4}For any $\xi\in\left[  M\otimes\mathcal{W}_{D^{p+q}}\rightarrow
M\right]  \otimes\mathcal{W}_{D}$, any $\xi_{1}\in\left[  M\otimes
\mathcal{W}_{D^{p}}\rightarrow M\right]  \otimes\mathcal{W}_{D}$, any $\xi
_{2}\in\left[  M\otimes\mathcal{W}_{D^{q}}\rightarrow M\right]  \otimes
\mathcal{W}_{D}$, and any $\theta\in\left[  M\otimes\mathcal{W}_{D^{r}%
}\rightarrow\mathbb{R}\right]  $, we have the following:

\begin{enumerate}
\item
\[
\mathcal{A}_{p,q,r}\left(  \widehat{\mathcal{L}}_{\xi}\theta\right)
=\mathcal{A}_{p+q,r}\left(  \widehat{\mathcal{L}}_{\mathcal{A}_{p,q}\left(
\xi\right)  }\theta\right)
\]

\item
\[
\mathcal{A}_{p,q,r}\left(  \widehat{\mathcal{L}}_{\xi_{1}}\left(
\widehat{\mathcal{L}}_{\xi_{2}}\theta\right)  \right)  =\mathcal{A}%
_{p,q+r}\left(  \widehat{\mathcal{L}}_{\xi_{1}}\mathcal{A}_{q,r}\left(
\widehat{\mathcal{L}}_{\xi_{2}}\theta\right)  \right)
\]

\item
\[
\mathcal{A}_{p,q,r}\left(  \widehat{\mathcal{L}}_{\xi_{2}}\left(
\widehat{\mathcal{L}}_{\xi_{1}}\theta\right)  \right)  =\mathcal{A}%
_{q,p+r}\left(  \widehat{\mathcal{L}}_{\xi_{2}}\mathcal{A}_{p,r}\left(
\widehat{\mathcal{L}}_{\xi_{1}}\theta\right)  \right)
\]

\end{enumerate}
\end{proposition}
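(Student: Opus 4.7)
The plan is to recognize all three identities as instances of the same coset-decomposition principle that underlies Proposition \ref{t2.5.2'}. The key structural observation is that $\widehat{\mathcal{L}}_{\xi}\theta = \mathbf{D}(\xi \widetilde{\circledast} \theta)$ is built from a convolution that segregates the $p+q+r$ arguments into a block of $p$ arguments (fed into $\xi$) and a block of $q$ arguments (fed into $\theta$), followed by $\mathbf{D}$; permutations internal to each block therefore pull through the construction.

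First I would establish the following equivariance lemma: for any $\sigma \in \mathbb{S}_{p+q}$,
\[
\widehat{\mathcal{L}}_{\xi^{\sigma}}\theta = \left(\widehat{\mathcal{L}}_{\xi}\theta\right)^{\widetilde{\sigma}},
\]
where $\widetilde{\sigma}$ is the permutation of $\mathbb{S}_{p+q+r}$ extending $\sigma$ by the identity on $\{p+q+1,\dots,p+q+r\}$. This follows by unwinding the definition of $\widetilde{\circledast}$ as $\eta \otimes \mathrm{id}_{\mathcal{W}_{D^{q}}}$ post-composed with $\theta$: the permutation $\sigma$ reindexes only the copies of $D$ consumed by $\xi$, so it pushes forward as a permutation of the leading $p+q$ coordinates of $M \otimes \mathcal{W}_{D^{p+q+r}}$. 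A parallel equivariance holds for permutations $\tau \in \mathbb{S}_{q+r}$ of the "inner" block in a composite $\widehat{\mathcal{L}}_{\xi_1}(\widehat{\mathcal{L}}_{\xi_2}\theta)$, thanks to Proposition \ref{t5.1.2}: such a $\tau$ either permutes the $q$ arguments fed into $\xi_2$ or the $r$ arguments fed into $\theta$, and in both cases translates to a permutation of the trailing $q+r$ coordinates of the full $p+q+r$-variable expression.

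Combining this with the $\mathbb{R}$-linearity of $\widehat{\mathcal{L}}$ in its subscript (Proposition \ref{t5.1.5}.1), Part 1 reduces to the identity
\[
\mathcal{A}_{p+q,r}\!\left(\frac{1}{p!q!}\sum_{\sigma\in\mathbb{S}_{p+q}}\varepsilon_{\sigma}\left(\widehat{\mathcal{L}}_{\xi}\theta\right)^{\widetilde{\sigma}}\right)=\mathcal{A}_{p,q,r}\!\left(\widehat{\mathcal{L}}_{\xi}\theta\right),
\]
which is the standard coset decomposition of $\mathbb{S}_{p+q+r}$ along $\mathbb{S}_{p+q}\times\mathbb{S}_r$, exactly as in the wedge-product case of Proposition \ref{t2.5.2'}. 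Parts 2 and 3 are handled identically: the inner equivariance lemma lets one move $\mathcal{A}_{q,r}$ or $\mathcal{A}_{p,r}$ out from under $\widehat{\mathcal{L}}_{\xi_1}$ or $\widehat{\mathcal{L}}_{\xi_2}$, converting it to an antisymmetrization acting on the trailing $q+r$ or the concatenated $p+r$ block of $\mathcal{A}_{p,q+r}$ or $\mathcal{A}_{q,p+r}$, and then the same coset identity finishes the job.

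The main obstacle will be Step 1, namely the careful bookkeeping of how a permutation of the $p+q$ arguments of $\xi$ (or of the $q+r$ arguments in the inner Lie derivative for Parts 2 and 3) is transported through the chain of canonical isomorphisms $\mathcal{W}_{D^{p+q+r}} = \mathcal{W}_{D^{p+q}}\otimes_{\infty}\mathcal{W}_{D^{r}}$ and through the evaluation maps $\eta\otimes\mathrm{id}$ that define $\widetilde{\circledast}$. Once this equivariance is established, no further ingredient is required: the rest is the same coset-decomposition argument already used for wedge products, and nothing about the specific structure of $\mathbf{D}$ or of $\theta$ enters beyond linearity.
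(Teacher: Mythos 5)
Your proposal is correct and follows essentially the same route as the paper, whose entire proof is the one-line remark that the claim holds ``by the same token as the familiar associativity of wedge products'' (i.e.\ Proposition \ref{t2.5.2'}). Your equivariance lemma --- that a permutation of the block of arguments consumed by $\xi$ (or by the inner Lie derivative) pushes forward through $\widetilde{\circledast}$ and $\mathbf{D}$ to a block permutation of $\widehat{\mathcal{L}}_{\xi}\theta$ --- combined with linearity and the coset decomposition of $\mathbb{S}_{p+q+r}$ along the relevant Young subgroup, is precisely the content the paper leaves implicit.
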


\begin{proof}
By the same token as that in the familiar associativity of wedge product in
differential forms.
\end{proof}

\begin{theorem}
\label{t5.2.5}If both $\xi_{1}\in\left[  M\otimes\mathcal{W}_{D^{p}%
}\rightarrow M\right]  \otimes\mathcal{W}_{D}$ and $\xi_{2}\in\left[
M\otimes\mathcal{W}_{D^{q}}\rightarrow M\right]  \otimes\mathcal{W}_{D}$ are
tangent-vector-valued semiforms, then we have
\[
\mathcal{L}_{\left[  \xi_{1},\xi_{2}\right]  _{FN}}=\left[  \mathcal{L}%
_{\xi_{1}},\mathcal{L}_{\xi_{2}}\right]  =\mathcal{L}_{\xi_{1}}\circ
\mathcal{L}_{\xi_{2}}-(-1)^{pq}\mathcal{L}_{\xi_{2}}\circ\mathcal{L}_{\xi_{1}}%
\]

\end{theorem}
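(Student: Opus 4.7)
The plan is to test both sides against an arbitrary differential $r$-semiform $\theta\in\left[M\otimes\mathcal{W}_{D^{r}}\rightarrow\mathbb{R}\right]$ and show the two resulting $(p+q+r)$-forms coincide. This parallels the strategy of Theorem \ref{t4.2.3} (the graded Jacobi identity for $[\,]_{FN}$): transport the identity from the semiform level, where Theorem \ref{t5.1.6} is available, up to the form level by applying the alternation operators and pushing them around via Proposition \ref{t5.2.4}.

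First I would unwind the definition of $\mathcal{L}$ twice: by definition,
\[
\mathcal{L}_{[\xi_{1},\xi_{2}]_{FN}}\theta
=\mathcal{A}_{p+q,r}\bigl(\widehat{\mathcal{L}}_{\mathcal{A}_{p,q}([\xi_{1},\xi_{2}]_{L})}\theta\bigr).
\]
Applying Proposition \ref{t5.2.4}(1) (with the $(1,p+q)$-icon $[\xi_{1},\xi_{2}]_{L}$ in the role of $\xi$) lets me move the $\mathcal{A}_{p,q}$ out of the subscript and into the outer alternator, giving
\[
\mathcal{L}_{[\xi_{1},\xi_{2}]_{FN}}\theta
=\mathcal{A}_{p,q,r}\bigl(\widehat{\mathcal{L}}_{[\xi_{1},\xi_{2}]_{L}}\theta\bigr).
\]
This is the key bridge between the FN bracket world and the plain Lie bracket world, and it is where the proof actually uses that the FN bracket is defined by alternation.

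Next I would invoke Theorem \ref{t5.1.6} applied to the semiform $\theta$, which expresses $\widehat{\mathcal{L}}_{[\xi_{1},\xi_{2}]_{L}}\theta$ as the graded commutator $\widehat{\mathcal{L}}_{\xi_{1}}(\widehat{\mathcal{L}}_{\xi_{2}}\theta)-(-1)^{pq}\widehat{\mathcal{L}}_{\xi_{2}}(\widehat{\mathcal{L}}_{\xi_{1}}\theta)$. By linearity of $\mathcal{A}_{p,q,r}$ (Proposition \ref{t5.1.5}-style), the previous display becomes the difference
\[
\mathcal{A}_{p,q,r}\bigl(\widehat{\mathcal{L}}_{\xi_{1}}\widehat{\mathcal{L}}_{\xi_{2}}\theta\bigr)
-(-1)^{pq}\mathcal{A}_{p,q,r}\bigl(\widehat{\mathcal{L}}_{\xi_{2}}\widehat{\mathcal{L}}_{\xi_{1}}\theta\bigr).
\]
Finally, parts (2) and (3) of Proposition \ref{t5.2.4} let me refactor each term: the first becomes $\mathcal{A}_{p,q+r}\bigl(\widehat{\mathcal{L}}_{\xi_{1}}\mathcal{A}_{q,r}(\widehat{\mathcal{L}}_{\xi_{2}}\theta)\bigr)=\mathcal{L}_{\xi_{1}}(\mathcal{L}_{\xi_{2}}\theta)$, and similarly the second becomes $\mathcal{L}_{\xi_{2}}(\mathcal{L}_{\xi_{1}}\theta)$. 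Combining gives exactly $[\mathcal{L}_{\xi_{1}},\mathcal{L}_{\xi_{2}}]\theta$, as required.

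The only real obstacle is the bookkeeping in the first and last steps, i.e.\ making sure that the several alternators of different arities $(\mathcal{A}_{p,q}$, $\mathcal{A}_{p+q,r}$, $\mathcal{A}_{p,q,r}$, $\mathcal{A}_{q,r}$, $\mathcal{A}_{p,q+r}$, $\mathcal{A}_{p,r}$, $\mathcal{A}_{q,p+r})$ telescope in exactly the way claimed by Proposition \ref{t5.2.4}. That proposition is precisely the Lie-derivative analogue of the associativity-by-alternation argument for the wedge product, so no genuinely new combinatorial identity is needed; every nontrivial ingredient has been isolated in Theorem \ref{t5.1.6} and Proposition \ref{t5.2.4}, and the theorem reduces to assembling them in the correct order.
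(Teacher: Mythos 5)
Your proposal is correct and follows essentially the same route as the paper's own proof: unwind the definition of $\mathcal{L}$, use Proposition \ref{t5.2.4}(1) to absorb $\mathcal{A}_{p,q}$ into $\mathcal{A}_{p,q,r}$, apply Theorem \ref{t5.1.6} at the semiform level, and redistribute the alternators via Proposition \ref{t5.2.4}(2)--(3). You even cite Theorem \ref{t5.1.6} where the paper's bracketed annotation misprints the reference as \ref{t5.1.5}, so nothing is missing.
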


\begin{proof}
For any $\theta\in\left[  M\otimes\mathcal{W}_{D^{r}}\rightarrow
\mathbb{R}\right]  \otimes\mathcal{W}_{D}$, we have
\begin{align*}
&  \mathcal{L}_{\left[  \xi_{1},\xi_{2}\right]  _{FN}}\theta\\
&  =\mathcal{A}_{p+q,r}\left(  \widehat{\mathcal{L}}_{\mathcal{A}_{p,q}\left(
\left[  \xi_{1},\xi_{2}\right]  _{L}\right)  }\theta\right) \\
&  =\mathcal{A}_{p,q,r}\left(  \widehat{\mathcal{L}}_{\left[  \xi_{1},\xi
_{2}\right]  _{L}}\theta\right)  \text{ \ }\\
&  \text{[By the first statement of Proposition \ref{t5.2.4}]}\\
&  =\mathcal{A}_{p,q,r}\left(  \widehat{\mathcal{L}}_{\xi_{1}}\left(
\widehat{\mathcal{L}}_{\xi_{2}}\theta\right)  -(-1)^{pq}\widehat{\mathcal{L}%
}_{\xi_{2}}\left(  \widehat{\mathcal{L}}_{\xi_{1}}\theta\right)  \right) \\
&  \text{[By Theorem \ref{t5.1.5}]}\\
&  =\mathcal{A}_{p,q,r}\left(  \widehat{\mathcal{L}}_{\xi_{1}}\left(
\widehat{\mathcal{L}}_{\xi_{2}}\theta\right)  \right)  -(-1)^{pq}%
\mathcal{A}_{p,q,r}\left(  \widehat{\mathcal{L}}_{\xi_{2}}\left(
\widehat{\mathcal{L}}_{\xi_{1}}\theta\right)  \right) \\
&  =\mathcal{A}_{p,q+r}\left(  \widehat{\mathcal{L}}_{\xi_{1}}\mathcal{A}%
_{q,r}\left(  \widehat{\mathcal{L}}_{\xi_{2}}\theta\right)  \right)
-(-1)^{pq}\mathcal{A}_{q,p+r}\left(  \widehat{\mathcal{L}}_{\xi_{2}%
}\mathcal{A}_{p,r}\left(  \widehat{\mathcal{L}}_{\xi_{1}}\theta\right)
\right) \\
&  \text{[By the second and third statements of Proposition \ref{t5.2.4}]}\\
&  =\mathcal{L}_{\xi_{1}}\left(  \mathcal{L}_{\xi_{2}}\theta\right)
-(-1)^{pq}\mathcal{L}_{\xi_{2}}\left(  \mathcal{L}_{\xi_{1}}\theta\right)
\end{align*}

\end{proof}

\end{document}